\newtheorem{theorem}{Theorem}
\newtheorem{claim}[theorem]{Claim}
\newtheorem*{theorem*}{Theorem}
\newtheorem*{claim*}{Claim}
\newtheorem*{remark*}{Remark}
\newtheorem*{lemma*}{Lemma}
\newtheorem{lemma}[theorem]{Lemma}
\newcommand{\egh}{{\mathsf{EGH}}}
\renewcommand{\mod}{\mathop {\mathsf{mod}}}
\newcommand{\Z}{{\mathbb Z}}
\newcommand{\calX}{{\mathcal{X}}}
\newcommand{\good}{\mathbb{{T}}}
\newcommand{\N}{\mathbb{N}}
\newcommand{\eps}{\epsilon}
\newcommand{\R}{\mathbb{R}}
\newcommand{\F}{\mathbb{F}}
\newcommand{\E}{\mathop \mathbb{E}}
\newcommand{\gh}{{\mathsf{GH}}}
\newcommand{\nbits}{\{\pm 1\}}
\newcommand{\inner}[2]{\left \langle #1 , #2 \right \rangle}
\newcommand{\ip}[2]{\left\langle #1,#2\right\rangle}
\newcommand{\Expect}[1]{\mathop{\mathbb{E}}\left
	[ #1 \right ]}
\newcommand{\Ex}[2]{\mathop{\mathbb{E}}\displaylimits_{#1}\left
	[ #2 \right ]}
\title{Anti-concentration and \\ the Exact Gap-Hamming problem}
\begin{document}

\author{Anup Rao}
\address{School of Computer Science, University of Washington}
\email{anuprao@cs.washington.edu}
\author{Amir Yehudayoff}
\address{Department of Mathematics, Technion-IIT}
\email{amir.yehudayoff@gmail.com}
\thanks{A.Y.\ is partially supported by ISF grant 1162/15.
This work was done while
the authors were visiting the Simons
Institute for the Theory of Computing.}

\begin{abstract}
We prove anti-concentration bounds 
for the inner product of two independent random vectors, and use these bounds to prove lower bounds in communication complexity.  
We show that if $A,B$ are subsets of the cube
$\{\pm 1\}^n$ with $|A| \cdot |B| \geq 2^{1.01 n}$, and $X \in A$ and $Y \in B$ are sampled independently and uniformly, 
then the inner product $\ip{X}{Y}$ takes on any fixed value with probability at most $O(1/\sqrt{n})$. 
In fact, we prove the following stronger ``smoothness'' statement:
$$ \max_{k } \big| \Pr[\ip{X}{Y} = k] - \Pr[\ip{X}{Y} = k+4]\big|
\leq O(1/n).$$ 
We use these results to prove that the exact gap-hamming problem requires linear communication, resolving an open problem in communication complexity. 
We also conclude anti-concentration for
structured distributions with low entropy.
If $x \in \Z^n$ has no zero coordinates, and $B \subseteq \{\pm 1\}^n$ corresponds to a subspace of $\F_2^n$ of dimension $0.51n$, then 
$\max_k \Pr[\ip{x}{Y} = k]  \leq O(\sqrt{\ln (n)/n})$.
\end{abstract}

\maketitle

\section{Introduction}
Anti-concentration bounds establish that the distribution of outcomes of a random process is 
not concentrated in any small region. No single outcome is obtained too often.
Anti-concentration plays an important role in 
mathematics and computer
science. It is used in the  study of roots of random  polynomials~\cite{littlewood1943number},
 random matrix theory~\cite{kahn,tao2009inverse},
 communication complexity~\cite{chakrabarti2012optimal,vidick2012concentration,sherstov2012communication},
 quantum computation~\cite{Aaronson_2011},
and more. In particular, as we discuss below, anti-concentration bounds are very useful to understand the communication complexity of the gap-hamming function.

A well-known context in which anti-concentration has been studied extensively  is the sum of independent identically distributed random
variables. If $Y \in \nbits^n$ is uniformly distributed, then 
the probability that $\sum_{j=1}^n Y_j$ takes any specific value
is at most $\binom{n}{\lceil n/2 \rceil}/2^n = O(\tfrac{1}{\sqrt{n}})$. This was studied and generalized 
by Littlewood and Offord~\cite{littlewood1943number},
Erd\H{o}s~\cite{erdos1945lemma}, 
and many others. The classical Littlewood-Offord problem is about understanding the anti-concentration of the inner product $\ip{x}{Y} = \sum_{j=1}^n x_j Y_j$, for arbitrary $x \in \R^n$ and $Y \in \nbits^n$ chosen uniformly. For example, Erd\H{o}s proved that if $x$ has no non-zero coordinates, then  $\max_k  \Pr[\ip{x}{Y} = k] \leq \binom{n}{\lceil n/2 \rceil}/2^n = O(\tfrac{1}{\sqrt{n}})$.

It is interesting to understand the most general conditions under which such anti-concentration holds. Various generalizations were 
studied by Frankl and F\"uredi~\cite{frankl1988solution}, 
Hal{\'a}sz~\cite{halasz1977estimates}
and others (see~\cite{tao2010sharp}
and references within). These results show that stronger bounds can be proved when the vector $x$ satisfies stronger conditions. In this past work, the vector $Y$ is typically assumed to be uniformly distributed; indeed anti-concentration  fails when the 
entries of $Y$ are not independent.
For example, if $Y$ is sampled uniformly from the set of strings with exactly $\lceil n/2 \rceil$ entries that are $1$, then for $x = 1^n$,  
we have that $\ip{x}{Y}$ is always the same. Can we somehow recover anti-concentration when $Y$ is not uniform? 

We show that if extra structure holds then anti-concentration 
is still recovered although the entropy is small.
For example, if we identify $\{\pm 1\}^n$ with the vector space $\F_2^n$, by associating $-1$ with $1$ and $1$ with $0$, then our results imply:
\begin{theorem}\label{thm:subspace} 
There exists a constant $C>0$ so that the following holds.
	If $x \in \Z^n$ has no zero coordinates, $B \subseteq \{\pm 1\}^n$ corresponds to a subspace of $\F_2^n$ of dimension $0.51n$, and $Y \in B$ is uniformly distributed, then $$\max_{k \in \Z} \Pr[\ip{x}{Y} = k] \leq C \sqrt{\tfrac{\ln n}{n}}.$$
\end{theorem}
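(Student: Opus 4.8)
The plan is to use the group structure of $B$ to convert the linear form $\ip{x}{Y}$ into a bilinear form evaluated at two independent uniform points of $B$, and then invoke our main anti-concentration theorem. Since $B$ corresponds to a subspace of $\F_2^n$, it is a subgroup of $\nbits^n$ under coordinatewise multiplication, of size $2^{0.51 n}$; write $V = B$. Because $Y$ is uniform on the group $V$, we may write $Y = Y' \cdot Y''$ with $Y', Y''$ independent and uniform on $V$ (take $Y'$ a fresh uniform element and $Y'' = Y' \cdot Y$, so that $Y' \cdot Y'' = Y$). Then $\ip{x}{Y} = \sum_j x_j Y'_j Y''_j = \ip{X}{Y''}$ where $X := x \cdot Y'$, hence $\Pr[\ip{x}{Y} = k] = \Pr[\ip{X}{Y''} = k]$; here $X$ is uniform on $\{x\cdot v : v \in V\}$, which has size $2^{0.51 n}$ because $x$ has no zero coordinate, and $X$ is independent of $Y''$. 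The role of the constant $0.51$ is precisely that $2^{0.51n}\cdot 2^{0.51n} = 2^{1.02n} \ge 2^{1.01 n}$, so the size hypothesis of the main theorem is met. In particular, when $x \in \nbits^n$ the set $\{x\cdot v : v\in V\}$ is a coset of $V$ inside $\nbits^n$, and the main theorem applies directly (with $A$ that coset and $B = V$) to give the even stronger bound $\max_k \Pr[\ip{x}{Y} = k] \le O(1/\sqrt n)$.

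The genuinely harder case is general $x \in \Z^n$ with no zero coordinate: then $X = x \cdot Y'$ ranges over the generalized box $\prod_j \{-|x_j|, |x_j|\}$ rather than over $\nbits^n$, so the main theorem does not apply off the shelf. I would handle this by establishing the analogous anti-concentration and smoothness estimates for the weighted inner product $\sum_j x_j X_j Y_j$, with $X$ and $Y$ uniform on dense subsets of $\nbits^n$, adapting the proof of the main theorem to accommodate fixed nonzero integer weights, and then applying this with $A = B = V$. This weighted step is where a $\sqrt{\ln n}$ factor is lost relative to the Boolean case, and I expect the weighted smoothness bound — i.e., controlling the effect of the integer magnitudes $|x_j|$ — to be the main obstacle.

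Finally, to pass from a weighted smoothness bound of the shape $\max_k \big|\Pr[\ip{x}{Y} = k] - \Pr[\ip{x}{Y} = k + c]\big| \le O(\tfrac{\ln n}{n})$ (for a suitable fixed spacing $c$) to the stated anti-concentration bound, one uses a standard averaging argument: if $p = \max_k \Pr[\ip{x}{Y} = k]$, then smoothness forces $\Pr[\ip{x}{Y} = k] \ge p/2$ for $\Omega(pn/\ln n)$ values of $k$, and since these probabilities sum to at most $1$ we get $p^2 = O(\ln n/n)$, which is the claimed bound. Everything here except the weighted smoothness estimate is routine — the group-theoretic reduction of the first paragraph together with an appeal to the main theorem — so the heart of the matter is controlling how the magnitudes $|x_j|$ affect the distribution of $\ip{x}{Y}$.
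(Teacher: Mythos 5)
Your opening reduction is exactly the paper's: since $B$ is a subgroup of $(\nbits^n,\cdot)$, multiplying by an independent uniform element of $B$ leaves the law of $\ip{x}{Y}$ unchanged, so the single direction $x$ can be traded for a random direction $X = x\cdot Y'$ that is uniform on the coset $\{x\cdot v : v\in B\}$, a set of $2^{0.51n}$ distinct points inside the two-cube $\prod_j\{x_j,-x_j\}$ (distinctness uses $x_j\neq 0$), independent of a uniform $Y''\in B$. Your remark that for $x\in\nbits^n$ this already gives $O(1/\sqrt n)$ via Theorem~\ref{thm:main}, and your averaging step converting a smoothness bound of order $\ln(n)/n$ into the stated $\sqrt{\ln(n)/n}$ anti-concentration bound, are both correct.

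The genuine gap is precisely where you flag it: for general integer $x$ you need an anti-concentration (or smoothness) estimate in which the directions range over the two-cube $\prod_j\{x_j,-x_j\}$ rather than over $\nbits^n$, and you only assert that this ``should follow by adapting the proof of the main theorem.'' In the paper this is Theorem~\ref{thm:main1}(1), proved via Theorem~\ref{thm:mainTechU}, and it is not a routine adaptation: the good set $G$ used in the encoding argument for Theorem~\ref{thm:tech} depends on the Fourier angle $\theta$, which is fatal once the differences $d_j=2x_j$ vary with $j$. The paper replaces it by a $\theta$-free set $J'(x)\cap J(y)$ and compensates with the $(1+\nu)$-power/strict-convexity device (Claim~\ref{clm:MinPhi}, Lemmas~\ref{lemma:techU} and~\ref{lemma:techU2}), an additional averaging over the random direction $X$ (which has no analogue in the Boolean proof), a Hal\'asz-type count $r_\ell(A)$ of additive relations among the $d_j$'s, and Kemperman's theorem to control the measure of the set of $\theta$ where $D(\theta)$ is small; the $\sqrt{\ln n}$ loss you anticipate indeed comes out of this analysis. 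Note also that the paper does not route the two-cube case through a smoothness statement at a fixed spacing $c$ (which for arbitrary integer weights is delicate to even formulate); it bounds $\E_\theta\big|\E_Y \exp(2\pi i\theta\ip{x}{Y})\big|$ directly and applies ($\star$). So your reduction matches the paper, but the technical heart of Theorem~\ref{thm:subspace} is left unproved in your proposal, and it constitutes the bulk of the paper's work.
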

Theorem \ref{thm:subspace} is a direct consequence of Theorem \ref{thm:main1} below. It shows that anti-concentration holds even when $Y$ is far from being uniform,
but when the direction $x$ is random as well.

\begin{remark*} 
	Theorem \ref{thm:subspace} and similar results can be used as a black box to prove the same bounds when $x$ is a real-valued vector. To see this, think of the relevant real numbers as vectors in a finite dimensional vector space
	over the rationals. We omit the details here.
\end{remark*}

Another natural setting is to consider the inner-product $\ip{X}{Y}$ of two independent variables, neither of which may be uniform. Recent work has proved some interesting results under the assumption that $X,Y$ have nice structure \cite{tran2020smallest, jain2021singularity}, but what if the only assumption is that $X,Y$ a uniformly distributed on large sets?
The following theorem, proved by Chakrabarti and Regev ~\cite{chakrabarti2012optimal} along the way to proving new lower bounds in communication complexity, shows that this does recover some anti-concentration:

\begin{theorem*}[Chakrabarti and Regev~\cite{chakrabarti2012optimal}]
	There is a constant $c >0$ such that if $A,B \subseteq \nbits^n$ are each of size at least $2^{(1-c)n}$ and $X \in A, Y \in B$ are sampled uniformly and independently, then 
	$$\Pr [|\ip{X}{Y}| \leq c \sqrt{n}  ] \leq 1-c.$$
\end{theorem*}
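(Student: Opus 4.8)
I would argue by contradiction: fix a small constant $c>0$, suppose $|A|,|B|\ge 2^{(1-c)n}$, and suppose $\P[\,|\ip XY|\le c\sqrt n\,]>1-c$; the goal is a contradiction once $c$ is small enough. The starting point is that a large set has uniform measure far from a point mass. Let $X$ be uniform on $A$ and $p_i=\P[X_i=1]$. Subadditivity of entropy gives $(1-c)n\le\log_2|A|=H(X)\le\sum_i h(p_i)$ with $h$ the binary entropy, and since $h(\tfrac12-t)\le 1-\tfrac2{\ln 2}t^2$ this forces $\sum_i\Expect{X_i}^2=O(cn)$. Hence the expected Hamming distance between two independent uniform elements of $A$ is $\tfrac12\big(n-\sum_i\Expect{X_i}^2\big)\ge(\tfrac12-O(c))n$, so by Markov two such elements differ in at least $n/4$ coordinates with probability at least an absolute constant; the same holds for $B$.

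Next I would symmetrize. Let $X,X'$ be uniform on $A$ and $Y,Y'$ uniform on $B$, all independent, and put $U:=X-X',\ V:=Y-Y'\in\{0,\pm2\}^n$. Each of $\ip XY,\ip{X'}Y,\ip X{Y'},\ip{X'}{Y'}$ is distributed as $\ip XY$, so the hypothesis together with a union bound over $\ip UV=\ip XY-\ip X{Y'}-\ip{X'}Y+\ip{X'}{Y'}$ gives $\P[\,|\ip UV|\le 4c\sqrt n\,]>1-4c$. It would therefore suffice to show that the inner product of the two independent symmetric ``difference vectors'' $U,V$ — each, by the first paragraph, supported on $\Omega(n)$ coordinates with constant probability — escapes every length‑$O(\sqrt n)$ window with probability at least an absolute constant. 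This is the ``anti-concentration of the inner product of two spread vectors'' one wants to cash in: if the conditional sign patterns of $U$ and $V$ on their supports were (near‑)uniform, then $\ip UV$ would behave like a $\pm1$‑random walk of length $\Omega(n)$ and would lie in $[-4c\sqrt n,4c\sqrt n]$ with probability only $O(c)$, contradicting the bound above for $c$ small.

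The step I expect to be the real obstacle is exactly that ``$A$ large'' does not by itself make the sign pattern of $X-X'$ on its support near‑uniform. Conditioned on $\mathrm{supp}(X-X')=T$ one has $X'=X\oplus\mathbf 1_T$, so this sign pattern is the restriction to $T$ of a uniform element of $A_T:=A\cap(A\oplus\mathbf 1_T)$, whose min-entropy is controlled by $\log_2|A_T|$; a counting identity and Cauchy--Schwarz give $\E\,|A_T|=\sum_T|A_T|^2/|A|^2\ge|A|^2/2^n\ge 2^{(1-2c)n}$ when $T$ is drawn from the law of $\mathrm{supp}(X-X')$ — the right bound \emph{on average}, but not obviously for a constant fraction of $T$. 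And high min-entropy of the two sign patterns is anyway not enough on its own: the inner product over $\Z$ of two independent near‑uniform $\pm1$ vectors can be identically $0$ (signs near $\pm\mathbf 1$ against signs uniform on $\{s:\sum_j s_j=0\}$), so one must additionally exploit that $A$ and $B$ stay spread in \emph{most} directions after restriction to a common large coordinate block — a robust, quantitative refinement of the theorem itself. Making that refinement precise and pushing it through (by a combinatorial/inductive argument, as Chakrabarti and Regev do) is the technical heart. A symmetrization‑free alternative — finding $\Omega(cn)$ nearly orthogonal directions $x^{(j)}\in A$ so that $\bigcap_j\{y:|\ip{x^{(j)}}y|\le c\sqrt n\}$ is too small to hold the $(1-O(c))$‑fraction of $B$ the hypothesis would put there — runs into the same issue, since a union bound over the slab constraints only tolerates $O(1/c)$ of them.
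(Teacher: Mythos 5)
Your proposal is not a proof: after the entropy estimate and the symmetrization it reduces the theorem to the assertion that $\ip{U}{V}$ (equivalently $\ip{X}{Y}$) escapes every window of length $O(\sqrt n)$ with constant probability whenever the two vectors come from large sets, and that assertion is exactly the content of the theorem --- nothing in the proposal establishes it. Your own discussion makes the gap explicit: high min-entropy of the conditional sign patterns is not sufficient (your example of signs near $\pm\mathbf 1$ against signs uniform on $\{s:\sum_j s_j=0\}$ is essentially the paper's own counterexample with $B=\{y:\sum_j y_j=0\}$), the average-case bound $\E|A_T|\ge 2^{(1-2c)n}$ does not control a constant fraction of the blocks $T$, and the ``robust, quantitative refinement'' you say is needed is precisely the technical heart that is left unproved. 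The preliminary steps are fine (the subadditivity bound $\sum_i\E[X_i]^2=O(cn)$, the reverse-Markov spread estimate, and the union bound giving $\Pr[|\ip UV|\le 4c\sqrt n]>1-4c$ are all correct), but they only restate the problem in symmetrized form; no mechanism is supplied that turns ``spread with constant probability'' into anti-concentration of the inner product, and as you note a union bound over slab constraints cannot tolerate more than $O(1/c)$ directions, so the sketch cannot be closed as written.

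For comparison, the paper does not argue through symmetrization or near-orthogonal hard cores at all. It deduces this statement from Theorem \ref{thm:main} (in fact from the stronger pointwise bound $\max_k\Pr[\ip XY=k]\le O(1/\sqrt n)+2^{-\delta n}$, summed over the $O(\sqrt n)$ values of $k$ in the window), and Theorem \ref{thm:main} is proved in Fourier space: by $(\star)$ it suffices to bound $\bigl|\Ex{Y}{\exp(2\pi i\theta\ip xY)}\bigr|$ for each fixed $\theta$ and for all but $2^{n(1-\beta+\delta)}$ directions $x$. That bound is obtained by revealing the coordinates of $Y$ one at a time (Lemma \ref{lemma:tech}, tracking the conditional entropies $\gamma_j$ and phases $\phi_j$) and then showing via an explicit encoding argument (Lemmas \ref{lemma:encoding1} and \ref{lemma:encoding2}) that only few directions $x$ can make many terms $\sin^2(\phi_j+x_j\eta)$ small. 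This route never needs the restricted sign patterns to be near-uniform --- it only needs the set of bad directions to be small, which is exactly what the counting/encoding step delivers --- and that is how it sidesteps the obstruction on which your sketch stalls. If you want to complete a proof along your lines you would essentially have to reconstruct the Chakrabarti--Regev correlation inequality (or one of the later arguments of Vidick or Sherstov), which is a genuinely different and harder piece of work than what is written here.
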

Alternate proofs of the same bound were subsequently given in ~\cite{vidick2012concentration,sherstov2012communication}. The theorem shows that $\ip{X}{Y}$
does not land in an interval of length 
much smaller than $\sqrt{n}$ with high probability. The strongest anti-concentration bounds give point-wise estimates. We would like to control the
{\em concentration probability} 
$$\max_{k \in \Z}  \ \Pr[ \ip{X}{Y} = k] ;$$
see~\cite{tao2009inverse}
and references within.

In our work we prove
a sharp bound on the point-wise concentration probability that holds for an overwhelming majority of directions $x$.

\begin{theorem}\label{thm:main}
	For every $\beta > 0$  and 
	$\delta > 0$,
	there exists $C>0$ such that the following holds. If $B \subseteq \nbits^n$ is of size $2^{\beta n}$, and $Y \in B$ is uniformly distributed, then for all but $2^{n(1-\beta +\delta)}$ directions $x \in \nbits^n$, 
	$$\max_{k \in \Z} \Pr_Y[\ip{x}{Y}=k] \leq \tfrac{C}{\sqrt{n}}.$$ In particular, if $X$ is independent of $Y$ and uniformly distributed in a set $A$ of size $2^{n(1-\beta + 2\delta)}$, then
	$$\max_{k \in \Z} \Pr_Y[\ip{X}{Y}=k] \leq \tfrac{C}{\sqrt{n}} + 2^{-\delta n}.$$
\end{theorem}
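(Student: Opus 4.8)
The plan is to derive the statement from the two‑set point‑wise anti‑concentration bound that this paper establishes (Theorem~\ref{thm:main1}): if $A,B\subseteq\nbits^n$ satisfy $|A|\cdot|B|\ge 2^{(1+\eta)n}$ and $X\in A$, $Y\in B$ are independent and uniform, then $\max_k\Pr[\ip XY=k]\le C_\eta/\sqrt n$. Granting this, the ``majority of directions'' statement follows by pigeonhole and averaging, and the ``in particular'' clause is then an elementary deduction.

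For the first assertion, call $x\in\nbits^n$ \emph{bad} if $\max_k\Pr_Y[\ip xY=k]>C/\sqrt n$, and suppose toward a contradiction that there are more than $2^{n(1-\beta+\delta)}$ bad directions. Attach to each bad $x$ a witness $k(x)$ with $\Pr_Y[\ip xY=k(x)]>C/\sqrt n$; since $\ip xY$ takes at most $n+1$ values, some fixed $k_0$ is a witness for at least $\frac1{n+1}2^{n(1-\beta+\delta)}\ge 2^{n(1-\beta+\delta/2)}$ of them (for $n$ large). Let $A$ be that set of directions. Then $|A|\cdot|B|\ge 2^{n(1+\delta/2)}$, so Theorem~\ref{thm:main1} with $\eta=\delta/2$ gives a constant $C'$, depending only on $\delta$, with $\Pr_{X\in A,\ Y\in B}[\ip XY=k_0]\le C'/\sqrt n$. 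On the other hand every $x\in A$ has $\Pr_Y[\ip xY=k_0]>C/\sqrt n$, so averaging over $X\in A$ gives $\Pr_{X,Y}[\ip XY=k_0]>C/\sqrt n$. Choosing $C$ at least $C'$ (and at least $\sqrt{n_0}$, to cover the finitely many $n<n_0(\delta)$ for which the pigeonhole step fails, where $\max_k\Pr\le1$ suffices) yields a contradiction, bounding the number of bad directions.

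The ``in particular'' clause is then immediate. Writing $\mathcal B$ for the (at most $2^{n(1-\beta+\delta)}$) bad directions, if $X$ is uniform on a set $A$ of size $2^{n(1-\beta+2\delta)}$ then $\Pr[X\in\mathcal B]\le|\mathcal B|/|A|\le 2^{-\delta n}$, and hence for every $k$
\[
\Pr_{X,Y}[\ip XY=k]=\mathbb{E}_X\!\big[\Pr_Y[\ip XY=k\mid X]\big]\le \Pr[X\in\mathcal B]+\tfrac C{\sqrt n}\le 2^{-\delta n}+\tfrac C{\sqrt n}.
\]

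The genuine difficulty is entirely inside Theorem~\ref{thm:main1}, the sharpening of the Chakrabarti–Regev interval estimate to a point‑wise estimate, and that is the step I expect to fight with. To attack it I would Fourier‑expand over the $x$‑cube: with $\hat\mu(S)=\mathbb{E}_{Y\in B}[\chi_S(Y)]$ one gets $\Pr_Y[\ip xY=k]=\sum_{S\subseteq[n]}\hat\mu(S)\,\chi_S(x)\,c_{k,|S|}$, where $c_{k,s}=\frac1{2\pi}\int_{-\pi}^{\pi}e^{-ik\theta}(\cos\theta)^{n-s}(i\sin\theta)^{s}\,d\theta$, and one checks that $|c_{k,s}|=O(1/\sqrt n)$ only for $s\in\{0,n\}$ and is a factor $\sqrt n$ (and much more, in the bulk) smaller for $0<s<n$. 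Thus the level‑$0$ and level‑$n$ parts contribute a harmless $O(1/\sqrt n)$ for every $x$ and $k$, and the problem reduces to controlling, under the product distribution, how often the remaining part $\sum_{0<|S|<n}\hat\mu(S)c_{k,|S|}\chi_S(x)$ can be as large as $\Theta(C/\sqrt n)$. Making that precise is the delicate combinatorial heart: the natural handles are the Parseval identity $\sum_S\hat\mu(S)^2=2^{(1-\beta)n}$ together with the structural fact that any hyperplane slice $B\cap\{y:\ip xy=k\}$ of density exceeding $C/\sqrt n$ forces $x$ into the orthogonal complement of a subspace of dimension $\ge\beta n-O(\log n)$, which severely restricts how many directions $x$ can be bad — and turning these handles into the stated count is where I expect the real work to lie.
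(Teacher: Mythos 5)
The reduction you give (pigeonhole on the witness value $k_0$, then averaging over the bad directions, plus the union-bound deduction of the ``in particular'' clause) is fine as far as it goes, but it rests entirely on a black box that the paper does not supply in the form you need, and that in fact \emph{is} the theorem you are asked to prove. You cite Theorem~\ref{thm:main1} as saying: if $|A|\cdot|B|\ge 2^{(1+\eta)n}$ for arbitrary $A,B\subseteq\nbits^n$, then $\max_k\Pr[\ip{X}{Y}=k]\le C_\eta/\sqrt n$. The actual Theorem~\ref{thm:main1} is about two-cubes $A=A_1\times\dots\times A_n$ (not arbitrary subsets of the hypercube), and its first case gives only $C\sqrt{\ln(n)/n}$; the paper explicitly remarks that this ``nearly implies'' Theorem~\ref{thm:main} but is weaker by a $\sqrt{\ln n}$ factor. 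The clean $O(1/\sqrt n)$ two-set statement you invoke is exactly the ``in particular'' clause of Theorem~\ref{thm:main} (equivalently the abstract's first claim), so as written your argument is circular: the pigeonhole step converts the per-direction statement into the averaged two-set statement and back, but neither is available before Theorem~\ref{thm:main} itself is proved.

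You acknowledge this and sketch an attack on the missing core (Fourier expansion over the $x$-cube, the coefficients $c_{k,s}$, Parseval $\sum_S\hat\mu(S)^2=2^{(1-\beta)n}$, and a ``structural fact'' that a dense slice forces $x$ into the orthogonal complement of a large subspace), but that sketch is not a proof: the structural claim is neither made precise (what does it mean for $x\in\nbits^n$ to lie in such an orthogonal complement?) nor justified, and you yourself flag it as ``where the real work lies.'' For comparison, the paper's route is different: it first proves the smoothness statement (Theorem~\ref{thm:mainP}) via Theorem~\ref{thm:tech}, a pointwise decay bound $|\E_Y\exp(2\pi i\theta\ip{x}{Y})|\le 2\exp(-cn\sin^2(4\pi\theta))$ holding for all but $2^{n(1-\beta+\delta)}$ directions for each $\theta$, established by an entropy-revealing/encoding argument (Lemmas~\ref{lemma:tech}, \ref{lemma:encoding1}, \ref{lemma:encoding2}), and then integrates over $\theta$. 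Theorem~\ref{thm:main} follows by the short averaging step given right after Theorem~\ref{thm:mainP}: from $|\Pr[\ip{x}{Y}=k]-\Pr[\ip{x}{Y}=k+4]|\le C/n$ one gets $\Pr[\ip{x}{Y}=k]\le \Pr[\ip{x}{Y}=k+4j]+Cj/n$, and averaging over $j\le m\approx\sqrt n$ yields the $O(1/\sqrt n)$ bound. So the genuinely hard content is missing from your proposal, and the part you do supply does not close the gap.
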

Our bound implies the result of Chakrabarti and Regev, but it is strictly stronger. It is also tight in the following senses.
As mentioned above, the $O(\tfrac{1}{\sqrt{n}})$ bound is tight even when $A$ and $B$ are $\{\pm 1\}^n$.
To see that the bound on the number of bad directions is sharp\footnote{This example is due to an anonymous reviewer.}, observe that if $B \subset \nbits^n$ is the set of $y$'s with $\sum_{j=1}^n y_j = 0$, and $A \subset \nbits^n$ is the set of $x$'s with $\sum_{j=1}^n x_j = (1-2\epsilon) n$ for some small $\epsilon>0$, then 
$$|B| \approx \tfrac{1}{\sqrt{n}} 2^{n}
\quad \& \quad |A| \approx 2^{h(\epsilon) n},$$ where $h(\epsilon)$ is the binary entropy function. Yet for every $x \in A$,  
$$\Pr[|\ip{x}{Y}| \leq 1] \geq \Omega(\tfrac{1}{\sqrt{\epsilon n}}).$$ 
The sets $A,B$ do not satisfy the conclusions of Theorem \ref{thm:main}, even though $|A| \cdot |B| \approx 2^{(1+h(\epsilon) )n}$.

Our methods lead to even stronger conclusions about the distribution of $\inner{x}{Y}$. We prove the following \emph{smoothness} result:

\begin{theorem}
	\label{thm:mainP} For every $\beta, \epsilon>0$, 
	there is $C>0$ so that the following holds.
	Suppose $B \subseteq \{\pm 1\}^n$ is a set with $|B|= 2^{\beta n}$, and $Y \in B$ is uniformly distributed. Then for all but $2^{(1-\beta + \epsilon)n}$ choices of $x\in \{
	\pm 1\}^n$, we have:
	$$\max_{k \in \Z} \big| \Pr[\ip{x}{Y} = k] - \Pr[\ip{x}{Y} = k+4\big|
	\leq \frac{C }{n}.$$ In particular, if $X$ is independent of $Y$ and uniformly distributed in a set $A$ of size $2^{(1-\beta+2\epsilon)n}$, then $$\max_{k \in \Z} \big| \Pr[\ip{X}{Y} = k] - \Pr[\ip{X}{Y} = k+4\big|
	\leq \frac{C }{n} + 2^{-\epsilon n}.$$
\end{theorem}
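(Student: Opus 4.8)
The plan is to pass to the Hamming-distance picture and expand everything in the Fourier basis of $\F_2^n$. Identify $\nbits^n$ with $\F_2^n$, write $p_k(x):=\Pr_Y[\ip{x}{Y}=k]$ and $q_j(x):=\Pr_Y[d(x,Y)=j]$; since $\ip{x}{y}=n-2\,d(x,y)$, the claimed bound is equivalent to $\max_j|q_j(x)-q_{j-2}(x)|\le C/n$ for all but $2^{(1-\beta+\eps)n}$ directions $x$ (the shift $4$ in the inner product becomes a shift $2$ in the distance). The reason the step is $2$ and not $1$ is a parity obstruction: if every $y\in B$ has even Hamming weight then $d(x,y)\equiv |x|\pmod2$ for each fixed $x$, so one of $q_j(x),q_{j-1}(x)$ always vanishes and $|q_j(x)-q_{j-1}(x)|$ is of order $1/\sqrt n$ for essentially all $x$; stepping by $2$ removes this. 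Letting $g_j=\mathbf 1\{|z|=j\}$ be the indicator of the Hamming sphere of radius $j$, we have $q_j(x)=\tfrac1{|B|}(\mathbf 1_B*g_j)(x)$, and expanding the convolution in characters,
$$q_j(x)=\frac1{|B|}\sum_{S\subseteq[n]}\widehat{\mathbf 1_B}(S)\,K_j(|S|)\,\chi_S(x),$$
where $K_j$ is the Krawtchouk polynomial (so $\widehat{g_j}(S)=2^{-n}K_j(|S|)$ and $K_j(0)=\binom nj$). Separating the trivial character,
$$q_j(x)-q_{j-2}(x)=\underbrace{\frac1{2^n}\Bigl(\binom nj-\binom n{j-2}\Bigr)}_{M_j}+\underbrace{\frac1{|B|}\sum_{S\ne\emptyset}\widehat{\mathbf 1_B}(S)\bigl(K_j(|S|)-K_{j-2}(|S|)\bigr)\chi_S(x)}_{E_j(x)},$$
the first term being exactly the value for uniform $Y$ and the second the noise.

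The main term is immediate: a local-limit-theorem estimate gives $|M_j|\le C/n$ uniformly in $j$, since all the mass sits at $|j-n/2|=O(\sqrt n)$, where $\binom nj/2^n=O(1/\sqrt n)$ while the relative second difference $1-\binom n{j-2}/\binom nj=O(|j-n/2|/n)=O(1/\sqrt n)$ on that range, and the two factors multiply. This is also the conceptual reason the bound improves from $1/\sqrt n$ (Theorem~\ref{thm:main}) to $1/n$: one differentiates the distribution once, at a cost of a factor $1/\sqrt n$. (Conversely, smoothness implies Theorem~\ref{thm:main}: averaging $|q_j-q_{j-2\ell}|\le \ell C/n$ over a same-parity window of $\Theta(\sqrt n)$ values, whose total mass is at most $1$, gives $q_j=O(1/\sqrt n)$.)

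The crux is the noise term. By orthonormality of the characters,
$$\Ex{x}{E_j(x)^2}=\frac1{|B|^2}\sum_{i\ge1}\Bigl(\sum_{|S|=i}\widehat{\mathbf 1_B}(S)^2\Bigr)\bigl(K_j(i)-K_{j-2}(i)\bigr)^2,$$
and $\sum_S\widehat{\mathbf 1_B}(S)^2=|B|/2^n$ by Parseval. The structural point is $K_j(i)-K_{j-2}(i)=[t^j]\,(1-t)^{i+1}(1+t)^{n+1-i}$: this is $K_j(i)=[t^j](1-t)^i(1+t)^{n-i}$ with one extra factor $(1-t)$, i.e.\ one extra factor $2|\sin(\theta/2)|$ in the contour representation $[t^j]f(t)=\tfrac1{2\pi}\int_{-\pi}^{\pi}f(e^{i\theta})e^{-ij\theta}\,d\theta$, which is exactly what turns the $O(1/\sqrt n)$ Krawtchouk bounds behind Theorem~\ref{thm:main} into the $O(1/n)$ bounds needed here. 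The target is $\Ex{x}{E_j(x)^2}\le \tfrac{C'}{n^2}\,2^{-(\beta-\eps/2)n}$; granting it, Markov's inequality makes the set of $x$ with $|E_j(x)|>C/n$ have density at most $2^{-(\beta-\eps/2)n}$, and a union bound over the $n+1$ values of $j$ (absorbing the $\mathrm{poly}(n)$ factor into the $\eps$ slack) gives $\max_j|q_j(x)-q_{j-2}(x)|\le |M_j|+|E_j(x)|\le 2C/n$ for all but $2^{(1-\beta+\eps)n}$ directions $x$. Reaching the target is where the work lies: the uniform bound $\sum_{|S|=i}\widehat{\mathbf 1_B}(S)^2\le\binom ni(|B|/2^n)^2$ can be essentially tight at small levels $i$ (e.g.\ when $B$ is a subcube), so plugging it into Parseval does not gain the needed factor $2^{-\beta n}$; one must first observe that such biases only translate the distribution of $\ip{x}{Y}$ rigidly and peel them off by passing to the induced set on the unbiased coordinates (as in the proof of Theorem~\ref{thm:main}), and one must in addition control $K_j(i)-K_{j-2}(i)$ uniformly over the moderate range of $i$, where the sharp Krawtchouk estimates are needed.

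Finally, the ``in particular'' statement for $X$ uniform in $A$ with $|A|=2^{(1-\beta+2\eps)n}$ follows at once: the fraction of $A$ consisting of bad directions is at most $2^{(1-\beta+\eps)n}/2^{(1-\beta+2\eps)n}=2^{-\eps n}$, so for every $k$,
$$\bigl|\Pr[\ip{X}{Y}=k]-\Pr[\ip{X}{Y}=k+4]\bigr|=\Bigl|\Ex{X}{p_k(X)-p_{k+4}(X)}\Bigr|\le \Pr[X\text{ good}]\cdot\tfrac Cn+\Pr[X\text{ bad}]\cdot1\le \tfrac Cn+2^{-\eps n},$$
using $|p_k(X)-p_{k+4}(X)|\le1$. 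The main obstacle is precisely the quantitative noise bound above: obtaining the sharp count $2^{(1-\beta+\eps)n}$ of bad directions — a cruder second moment, which distributes the Fourier mass at low levels or uses only a packing bound, loses an exponential factor — requires the refined Krawtchouk/character-sum analysis, separating genuine non-smoothness from the harmless rigid translations caused by biased coordinates and estimating $K_j(i)-K_{j-2}(i)$ uniformly across all Fourier levels (including the delicate moderate range). This is the technical heart, and it refines by one discrete derivative the analysis behind Theorem~\ref{thm:main}.
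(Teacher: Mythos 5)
Your reduction is fine up to the point where the real work starts, and then it stops: the entire theorem is riding on the unproven second-moment target $\Ex{x}{E_j(x)^2}\le \tfrac{C'}{n^2}2^{-(\beta-\eps/2)n}$, and as you yourself note, that bound is simply false for structured $B$. For a subcube of dimension $\beta n$, the level-one contribution alone is already of order $1/n^2$ with no exponential gain (the theorem still holds there, but only because $E_j(x)$ happens to be $O(1/n)$ pointwise, not because the variance is exponentially small), so Markov over $x$ cannot possibly yield the sharp count $2^{(1-\beta+\eps)n}$ of bad directions from this quantity. The proposed repair --- ``biases only translate $\ip{x}{Y}$ rigidly, peel them off by passing to the induced set on the unbiased coordinates'' --- is only a gesture: rigid translation is literally true for exact subcubes but not for general sets with large low-level Fourier mass (unions of subcubes, weight-constrained sets, etc.), no quantitative decomposition separating ``harmless bias'' from ``genuine non-smoothness'' is given, and the parenthetical ``as in the proof of Theorem~\ref{thm:main}'' points to nothing, since in the paper Theorem~\ref{thm:main} is itself a corollary of the statement you are trying to prove. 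Likewise the needed uniform control of $K_j(i)-K_{j-2}(i)$ over the moderate levels is asserted, not supplied. So what you have is a correct framing of the easy parts (the main term $|M_j|\le C/n$, the parity discussion, the union bound over $j$, and the ``in particular'' step) together with an honest admission that the technical heart is missing.

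For comparison, the paper avoids the $\F_2^n$-character expansion of $\mathbf{1}_B$ entirely and works instead with the circle-valued Fourier transform of the integer random variable $\ip{x}{Y}$: Theorem~\ref{thm:tech} shows that for each fixed $\theta$, all but $2^{(1-\beta+\delta)n}$ directions $x$ satisfy $|\Ex{Y}{\exp(2\pi i\theta\ip{x}{Y})}|\le 2\exp(-cn\sin^2(4\pi\theta))$; this is proved by an inductive, entropy-revealing estimate (Lemma~\ref{lemma:tech}) combined with explicit encoding arguments (Lemmas~\ref{lemma:encoding1} and~\ref{lemma:encoding2}) that count the bad $x$'s directly --- this is how the low-entropy structure of $B$ (the subcube-type obstruction that kills your second moment) is actually handled. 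The smoothness bound then follows by writing the difference of point probabilities as an integral over $\theta$ against the factor $\exp(4\pi i\theta)-\exp(-4\pi i\theta)$, whose extra $|\sin(4\pi\theta)|$ plays the role you assign to the extra $(1-t)$ factor in the Krawtchouk difference, plus a Markov argument over $\theta$ to control, per $x$, the measure of bad angles. If you want to salvage your route, you would need a genuinely worst-case (per-$x$, per-$B$) mechanism replacing the global variance bound --- which is essentially what the paper's encoding argument provides.
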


Theorem \ref{thm:main} is implied by Theorem \ref{thm:mainP}. Indeed, if $x$ is such that $$\max_{k \in \Z} \big| \Pr[\ip{x}{Y} = k] - \Pr[\ip{x}{Y} = k+4\big|
\leq \frac{C }{n},$$ then for all $k$ and $j \leq m$, 
 $$\Pr[\ip{x}{Y} = k] \leq \Pr[\ip{x}{Y} = k+4j] +  \frac{Cm}{n}.$$ But then we must have (for $m \approx \sqrt{n}$):
$$ \Pr[\ip{x}{Y} = k]  \leq \frac{Cm}{n} + \frac{1}{m } \cdot \sum_{j=1}^{m} \Pr[\ip{x}{Y} = k+4j] \leq \frac{Cm}{n} + \frac{1}{m} \lesssim \frac{1}{\sqrt{n}}.$$ 

Theorem \ref{thm:mainP} is proved in Section~ \ref{sec:smooth}. 
It is sharp in the following two senses.
First, even for the case $A=B=\{\pm 1\}^n$,
there is a $k$ so that\footnote{For an integer $k = \tfrac{n}{2}-\sqrt{n}$, we have
	${n \choose k+1}-{n \choose k}
	= {n \choose k+1} \frac{n-2k-1}{n-k} 
	\gtrsim \frac{2^n}{n} $.}
$$|\Pr[\ip{X}{Y} = k] - \Pr[\ip{X}{Y} = k+4| \geq \Omega( \tfrac{1}{n}).$$
So, $O(\tfrac{1}{n})$ is the best upper bound
possible.
Secondly, if $A = \{x\in \{\pm 1\}^n: n- \sum_{j=1}^n x_j =0\mod 4\}$ and $B = \{y \in \{\pm 1\}^n: n- \sum_{j=1}^n y_j =0 \mod 4\}$, then because $n = \ip{x}{y} \mod 2$ for all $x,y \in \{\pm 1\}^n$,  $\ip{x}{y} \mod 4$ is the same for every pair $x \in A, y \in B$. Thus, there are sets $A,B$ with $|A| = |B| = 2^{n-1}$
so that for all $j \in \{1,2,3\}$,
$$\ \  \ \ |\Pr[\ip{X}{Y} = 0] - \Pr[\ip{X}{Y} = j]|
= \Pr[\ip{X}{Y} = 0]  = \Omega( \tfrac{1}{\sqrt{n}})$$
So, $4$ is the minimum gap for which an 
$O(\tfrac{1}{n})$ upper bound holds.

Our proof builds a flexible framework for proving anti-concentration results in discrete domains. We use this framework to show that anti-concentration holds in a wide variety of settings. As we explain below, we show that bounds similar to those proved in ~\cite{Erdos2,Sark,Stan,halasz1977estimates}
apply even when the underlying distribution is not uniform. When $Y$ is uniformly distributed, the additive structure of the entries in the direction vector  $x$  
controls anti-concentration~\cite{frankl1988solution}. If $x$ is unstructured, we get even stronger anti-concentration bounds for $\ip{x}{Y}$. 
This idea is instrumental when analyzing random
matrices~\cite{kahn,tao2009inverse}.

We choose the direction $x$
from sets of the following form. 
We call a set $A \subset \Z^n$ a two-cube
if $A = A_1 \times A_2 \times \cdots \times A_n$,
where each $A_j = \{u_j,v_j\}$ consists of two distinct integers. 
The differences of $A$ are the numbers $d_j = u_j - v_j$
for $j \in [n]$.

The following theorem describes three cases 
that yield different anti-concentration bounds.
It shows that the additive structure of 
$A$ is deeply related to the bounds we obtain.
The less structured $A$ is, the stronger the bounds are.
The first bound in the theorem holds for arbitrary two-cubes.
The second bound holds
when all the differences $d_1,\ldots,d_n$ are distinct.
The third bound applies in more general settings where 
the set of differences is unstructured.
This is captured by the following definition. 
A set $S \subset \N$ of size $n$ is called a Sidon set, or a Golomb ruler, if the number of solutions to the equation
$s_1+ s_2 = s_3+ s_4$ for $s_1,s_2,s_3,s_4 \in S$
is $4\cdot {n \choose 2}+n$.
In other words, every pair of integers has a distinct sum. 
Sidon sets were defined by Erd\H{o}s and Tur\'an~\cite{ET}
and have been studied by many others since.
We say that 
$S \subset \Z$ is a weak Sidon set
if the number of solutions to the equation
$\eps_1 s_1+ \eps_2 s_2 = \eps_3 s_3+\eps_4 s_4$ for 
$\eps_1,\ldots,\eps_4 \in \{\pm 1\}$ and
$s_1,\ldots,s_4 \in S$
is at most $100 n^2$. The number $100$ can be replaced by any other constant, we use it here just to be concrete.

\begin{theorem}\label{thm:main1}
For every $\beta > 0$ and $\delta >0$,
there exists $C>0$ such that the following holds. 
Let $A \subset \Z^n$ be a two-cube with differences
$d_1,\ldots,d_n$.
Let $B \subseteq \nbits^n$ be of size $2^{\beta n}$ and $Y$ be uniformly distributed in $B$.
\begin{enumerate}
\item For all but $2^{n(1-\beta +\delta)}$ directions $x \in A$, 
		$$\max_{k \in \Z} \Pr_Y\left[\ip{x}{Y} =k\right] \leq 
	C \sqrt{\tfrac{\ln(n)}{n}} .$$
\item If $d_1,\ldots,d_n$ are distinct, then
for all but $2^{n(1-\beta +\delta)}$ directions $x \in A$, 
		$$\max_{k \in \Z} \Pr_Y\left[\ip{x}{Y} =k\right] \leq 
	C \sqrt{\tfrac{\ln(n)}{n^3}}  .$$
\item If $\{d_1,\ldots, d_n\}$ is a weak Sidon set of size $n$, then
for all but $2^{n(1-\beta +\delta)}$ directions $x \in A$, 
		$$\max_{k \in \Z} \Pr_Y\left[\ip{x}{Y} =k\right] \leq 
	C \sqrt{\tfrac{\ln(n)}{n^5}} .$$
	\end{enumerate}
\end{theorem}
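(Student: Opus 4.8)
The plan is to prove all three cases at once from a single statement, feeding in the appropriate classical Littlewood–Offord bound for each.

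\emph{Reduction to one statement.}
Writing a uniform $x\in A$ as $x=a+\zeta\odot b$ with $a_j=(u_j+v_j)/2$, $b_j=d_j/2\neq 0$, $\odot$ the coordinatewise product, and $\zeta$ uniform on $\nbits^n$, the phrase ``for all but $2^{n(1-\beta+\delta)}$ of the $2^n$ directions $x\in A$'' becomes ``with probability at least $1-2^{-(\beta-\delta)n}$ over $\zeta$''. For $z=y-y'$ with $y,y'\in\nbits^n$ one has $\langle x,z\rangle=\langle a,z\rangle+\sum_j\zeta_j(b_jz_j)$, where each $b_jz_j\in\{0,\pm d_j\}$ and $\{j:z_j\neq 0\}$ is exactly the set of coordinates on which $y$ and $y'$ differ. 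I would record the one–dimensional Littlewood–Offord input used in each case: for any nonzero $z$ of this form, $\Pr_\zeta[\langle x,z\rangle=0]\le g(|\mathrm{supp}(z)|)$ with $g(s)=O(s^{-1/2})$ in case (1) (Erd\H{o}s), $g(s)=O(s^{-3/2})$ in case (2) (using that the $d_j$ are distinct), and $g(s)=O(s^{-5/2})$ in case (3) (using the weak Sidon count via Hal\'asz's inequality). Fix a constant $\rho=\rho(\beta)\in(0,1/2)$ with $h(\rho)<\beta$, so that two independent uniform points of $B$ lie within Hamming distance $\rho n$ with probability at most $2^{-\Omega(n)}$. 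The target is $t\asymp\sqrt{\ln n}\cdot g(\rho n)$, which is $\Theta(\sqrt{\ln n/n})$, $\Theta(\sqrt{\ln n/n^3})$, $\Theta(\sqrt{\ln n/n^5})$ in the three cases.

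\emph{The moment argument.}
Fix a direction $x$ and put $p_k=\Pr_Y[\langle x,Y\rangle=k]$. For every integer $m$, $(\max_kp_k)^m\le\sum_kp_k^m=\Pr_{Y^{(1)},\dots,Y^{(m)}}[\langle x,Y^{(1)}\rangle=\cdots=\langle x,Y^{(m)}\rangle]$, the $Y^{(i)}$ being independent and uniform on $B$. By Markov's inequality,
$$\Pr_\zeta\!\Big[\max_kp_k>t\Big]\ \le\ t^{-m}\,\Pr_{\zeta,\,\vec Y}\big[\langle x,\,Y^{(i)}-Y^{(1)}\rangle=0\ \text{for }i=2,\dots,m\big],$$
and I would take $m=\gamma n$ for a small constant $\gamma<\min\{\beta,\rho\}$. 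Conditioning on $\vec Y$, the inner event says $x\perp\mathrm{span}\{Y^{(i)}-Y^{(1)}:i\ge2\}$. Since $|B|=2^{\beta n}$ is exponentially large, the usual fact that an affine subspace of dimension $d$ meets $\nbits^n$ in at most $2^d$ points yields, for the ``effective dimension'' $r^\ast(\vec Y)$ — the largest dimension of the span of the differences of a $\rho n$-separated sub-configuration of $\{Y^{(i)}\}$ — that $\Pr[r^\ast(\vec Y)\le m-1-\ell]\le 2^{-\Omega(\ell n)}$; in particular $r^\ast(\vec Y)=m-1$ with probability $1-2^{-\Omega(n)}$. Granting the estimate below, the inner probability is at most $\big(C\sqrt{\ln n}\,g(\rho n)\big)^{\,r^\ast(\vec Y)}$ for every $\vec Y$, so $\Pr_{\zeta,\vec Y}[\cdots]\le\sum_{\ell\ge0}2^{-\Omega(\ell n)}\big(C\sqrt{\ln n}\,g(\rho n)\big)^{\,m-1-\ell}\le 2\,\big(C\sqrt{\ln n}\,g(\rho n)\big)^{\,m-1}$, the geometric series converging because $g(\rho n)^{-1}$ is only polynomial in $n$.

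\emph{The multidimensional Littlewood–Offord estimate (the crux).}
The heart of the matter is to prove
$$\Pr_{\zeta\sim\nbits^n}\big[\langle\zeta,W^{(i)}\rangle=c_i\ \text{for }i=1,\dots,r\big]\ \le\ \big(C\sqrt{\ln n}\cdot g(\rho n)\big)^{r}$$
uniformly in $c_1,\dots,c_r$, for linearly independent $W^{(1)},\dots,W^{(r)}$ with $r\le\rho n$, each of Hamming weight at least $\rho n$ and with coordinates in $\{0,\pm d_j\}$ (such a large-weight basis of the span of the differences exists precisely when $r^\ast(\vec Y)\ge r$). I would derive it from the characteristic-function bound
$$\Pr_\zeta[\,\cdots\,]\ \le\ \int_{[0,1)^r}\prod_{j=1}^n\big|\cos\big(2\pi\langle\theta,W_{\cdot j}\rangle\big)\big|\,d\theta,$$
where $W_{\cdot j}\in\R^r$ is the $j$th column: near the origin (and its translates by the dual lattice) the bound $|\cos(2\pi u)|\le e^{-cu^2}$ turns the product into a Gaussian whose integral is $\asymp(\det W W^{\mathsf T})^{-1/2}$ and reproduces $\prod_i g(\rho n)$, while over the remaining $\theta$ the product is tiny because all but $O(\log n)$ of the phases $\langle\theta,W_{\cdot j}\rangle$ must then lie near integers — and it is this last count that costs the extra $\sqrt{\ln n}$ per dimension. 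Cases (2), (3) run identically with the sharper one–dimensional inputs, the weak Sidon property of $\{d_1,\dots,d_n\}$ controlling the number of $\pm$-quadruple coincidences among the coefficients $\pm d_j$. Feeding $\big(C\sqrt{\ln n}\,g(\rho n)\big)^{m-1}$ back into the displayed Markov bound and taking $t$ a large enough constant multiple of $\sqrt{\ln n}\,g(\rho n)$ gives $\Pr_\zeta[\max_kp_k>t]\le\kappa^{m}\cdot\mathrm{poly}(n)\le 2^{-(\beta-\delta)n}$ with $m=\gamma n$ and a small constant $\kappa$, which is the claim.

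\emph{Where the difficulty lies.}
I expect the multidimensional Littlewood–Offord estimate to be the real work: controlling the oscillatory integral away from its resonances, and forcing the exponent to be essentially the full rank $r$ rather than a constant even when the matrix of the $W^{(i)}$ is ill-conditioned — this is where the integrality and additive structure of the $\{0,\pm d_j\}$-valued coordinates must be exploited, not merely their norms — together with carrying the argument through with the correct polynomial rate under each of the three hypotheses on $d_1,\dots,d_n$. (The lossless variant, without the $\sqrt{\ln n}$, seems to be available only in the homogeneous case $A=B=\nbits^n$, which is Theorem~\ref{thm:main}.) A secondary nuisance is verifying the probabilistic input $\Pr[r^\ast(\vec Y)\le m-1-\ell]\le 2^{-\Omega(\ell n)}$, i.e.\ that $m$ independent uniform points of $B$ almost surely contain a well-separated, affinely non-degenerate sub-configuration of nearly full size.
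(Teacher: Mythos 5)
Your reduction to randomness over $\zeta$ and your choice of thresholds $t\asymp\sqrt{\ln n}\,g(\rho n)$ are reasonable, but the ``crux'' multidimensional Littlewood--Offord estimate is false in the regime where you need it, and this is fatal to the accounting. You need it with $r=r^\ast(\vec Y)=m-1$ and $m=\gamma n$, i.e.\ a bound of size $\bigl(C\sqrt{\ln n}\,g(\rho n)\bigr)^{\gamma n-1}\le n^{-\Omega(n)}$. But every nonempty event over $\zeta\in\nbits^n$ has probability at least $2^{-n}$, which is vastly larger than $n^{-\Omega(n)}$, so no such bound can hold ``uniformly in $c_1,\dots,c_r$''. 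Nor does restricting to the realizable right-hand sides help: take the two-cube $A=\{0,1\}^n$ (so $d_j=1$, allowed in case (1)). The direction $x=0\in A$, i.e.\ $\zeta=-\mathbf 1$, satisfies $\ip{x}{Y^{(i)}-Y^{(1)}}=0$ for \emph{every} $\vec Y$, so for every $\vec Y$ --- including those with $r^\ast(\vec Y)=m-1$ --- the conditional probability over $\zeta$ of your inner event is at least $2^{-n}$, contradicting the bound you plan to feed into the geometric series.

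The obstruction is not only in the lemma but in the moment-plus-Markov scheme itself, with the threshold the theorem demands. Because of such atoms, $\E_\zeta\bigl[\sum_k p_k^m\bigr]\ge 2^{-n}$, so Markov can give $\Pr_\zeta[\max_k p_k>t]\le 2^{-(\beta-\delta)n}$ only if $t^{-m}2^{-n}\le 2^{-(\beta-\delta)n}$, which with $t=\Theta(\sqrt{\ln n/n})$ forces $m\le O(n/\log n)$. On the other hand the ``typical'' contribution cannot be beaten: already for $B$ a subcube (or $B=\nbits^n$) one has $\sum_k p_k^m\ge \Omega(\sqrt n)\,(c/\sqrt n)^m$ for every $\zeta$, so $t^{-m}\E_\zeta[\sum_k p_k^m]\ge \sqrt n\,\bigl(c/(C\sqrt{\ln n})\bigr)^m$, and pushing this below $2^{-(\beta-\delta)n}$ requires $m\ge\Omega(n/\log\log n)$. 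The two requirements are incompatible for large $n$: with threshold $\widetilde O(1/\sqrt n)$ this method certifies an exceptional fraction of at most $2^{-O(n\log\log n/\log n)}=2^{-o(n)}$, never $2^{-(\beta-\delta)n}$; and you cannot simply delete the atom directions from the expectation, since bounding their number is exactly what is being proved (truncating $\max_k p_k$ before taking moments destroys the identity with the collision probability that your conditioning on $\vec Y$ relies on). This is precisely the difficulty the paper's proof is built to avoid: it never takes high moments over the direction, but instead bounds the single-frequency Fourier coefficient $\lvert\E_Y\exp(2\pi i\theta\ip{x}{Y})\rvert$ raised to a power $1+\nu$ (a strict-convexity trick), averages over the direction coordinate-by-coordinate, controls the exceptional $x$'s by an explicit encoding/double-counting argument, and only then integrates over $\theta$ using a Hal\'asz-type moment bound (via $r_\ell(A)$) together with Kemperman's theorem.
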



To see why this is a generalization of past work, observe that if $Y \in \{\pm 1\}^n$ is uniformly distributed, then for any $x \in \Z^n$, the distribution of $\inner{x}{Y}$ is identical to the distribution of $\inner{X}{Y}$, where $X$ is obtained by picking uniformly random signs for the coordinates of $x$. The number of directions in the support of $X$ is $2^n$, and the theorem above can be applied. 

A similar idea proves Theorem \ref{thm:subspace}. 
The key point is the assumption that $B$ corresponds to a subspace
of $\F_2^n$.
Every element of $B$ corresponds to a signing of $x$ that gives the same distribution for $\ip{x}{Y}$. 
We thus obtained a set $A$ of distinct directions of size $|A|=|B|$.
Because $|A| \cdot |B| \geq 2^{1.02n}$, 
we can apply Theorem \ref{thm:main1} to prove Theorem \ref{thm:subspace}.

The proof of  Theorem~\ref{thm:main1}
is given in Section~\ref{sec:UTC}. The first bound in Theorem~\ref{thm:main1} nearly implies Theorem~\ref{thm:main}. It is weaker by a factor of $\sqrt{\ln(n)}$. However, it holds for all two-cubes, not just the hypercube $\{\pm 1\}^n$. 
The second bound almost
matches the sharp $O(1/n^{1.5})$ bound 
that holds when $(u_j,v_j) = (j,-j)$ for each $j$ and $Y$ is uniform in the hypercube~\cite{Sark,Stan}.
We believe that the $\sqrt{\ln(n)}$ factor 
is not needed, but were not able to eliminate it.
The theorem is, in fact, part of a more general phenomenon.
We postpone the full technical description to
Section~\ref{sec:UTC}.

\subsection*{An application to Communication Complexity}
These kinds of anti-concentration bounds are intimately connected to understanding the communication complexity of the \emph{gap-hamming} function. The gap-hamming function
$\gh = \gh_{n,k} : \{\pm 1\}^n \rightarrow \{0,1,\star\}$ is defined
by
$$\gh(x,y) = \begin{cases}
1 & \ip{x}{y}\geq k, \\
0 & \ip{x}{y} \leq - k ,\\
\star & \text{otherwise.}
\end{cases}$$ Note that the Hamming distance between $x$ and $y$
is $\tfrac{n - \ip{x}{y}}{2}$.
This problem is well-studied in communication complexity; for background and definitions,
see the books~\cite{kushilevitzcommunication,raoY}. 
Alice gets $x$, Bob gets $y$, and their goal is to compute $\gh(x,y)$.
It is a promise problem; the protocol is allowed to compute any value when the input corresponds to a $\star$, and it needs to be correct only on the remaining inputs. 
The standard choice for $k$ is $\lceil  \sqrt{n} \rceil$,
so we write $\gh_n$ to denote $\gh_{n , \lceil \sqrt{n} \rceil}$.

The gap-hamming problem was introduced by Indyk and Woodruff in the context of streaming algorithms~\cite{indyk2003tight},
and was subsequently studied and used in many works and in various contexts (see~\cite{jayram2008one,woodruff2009average,brody2009multi,brody2010better,chakrabarti2010near} and references within).
Proving a sharp $\Omega(n)$ lower bound on its randomized communication
complexity was a central open problem for almost ten years,
until Chakrabarti and Regev~\cite{chakrabarti2012optimal} solved it using the anti-concentration bound mentioned above.
Later, Vidick~\cite{vidick2012concentration} and 
Sherstov~\cite{sherstov2012communication}  found simpler proofs.
The difficulties in proving this lower bound
are explained in~\cite{chakrabarti2012optimal,sherstov2012communication}.

The exact gap-hamming function is defined 
by $$\egh_{n,k}(x,y) = \begin{cases}
1 & \ip{x}{y} = k, \\
0 & \ip{x}{y} = - k ,\\
\star & \text{otherwise.}
\end{cases}$$
As before, we write 
$\egh_n$ to denote $\egh_{n,\lceil \sqrt{n} \rceil}$. 
The exact gap-hamming function is easier to compute than gap-hamming; the protocol only needs to worry about inputs whose inner product has magnitude \emph{exactly}~$k$.  
Proving a sharp lower bound on the 
randomized communication complexity of $\egh$ was left as an open problem.

One of the difficulties in proving a lower bound
for $\egh$ is the following somewhat surprising  
property: 
{\em for infinitely many values of $n$,  
	the deterministic communication complexity of $\egh_n$ is $2$}.
The reason is that there is a simple deterministic protocol of length $2$ that computes
$\ip{x}{y} \mod 4$ for all $n$. This protocol corresponds to the sets $A,B$ discussed with regards to Theorem~\ref{thm:mainP} above. The players announce 
the parities of their inputs $\tfrac{n- \sum_{j=1}^n x_j}{2} \mod 2$ and $\tfrac{n- \sum_{j=1}^n y_j}{2} \mod 2$. These bits  determine $\ip{x}{y} \mod 4$. Indeed, flipping a bit in $x$ changes $\tfrac{n- \sum_{j=1}^n x_j}{2} \mod 2$, and changes  $\ip{x}{y}$ by $+2 \mod 4$.
For example, this deterministic protocol computes $\egh_n$
when $\sqrt{n}$ is an odd integer, because then we have 
$- \sqrt{n} \neq \sqrt{n} \mod 4$.

We overcome this difficulty and show that $\egh$ is extraordinary in that 
although it is a natural problem
with communication complexity $O(1)$ for infinitely many values of $n$, the randomized communication complexity of
$\egh_n$ is at least $\Omega(n)$
for infinitely many values of $n$. Denote by $U_{n,k}$ the uniform distribution over
the set of pairs $(x,y) \in \{\pm 1\}^n \times \{\pm 1\}^n$
so that $\ip{x}{y} \in \{\pm k\}$.

\begin{theorem}\label{thm:LBgen1}
	There is universal constant $\alpha>0$ such that for infinitely many values of~$n$, any protocol that computes $\egh_{n}$
	over inputs from $U_{n,\lceil \sqrt n \rceil}$ with success probability $2/3$ must have communication complexity at least $\alpha n$.
\end{theorem}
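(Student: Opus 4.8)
The plan is to prove the lower bound for $\egh_n$ via a reduction from the already-resolved randomized lower bound for the (non-exact) gap-hamming function $\gh$, using the smoothness bound of Theorem~\ref{thm:mainP} as the key new tool. The intuition is that $\egh$ looks easier than $\gh$ only because its ``hard'' inputs sit on a single level set $\{\ip{x}{y} = \pm k\}$; but Theorem~\ref{thm:mainP} says that, for suitable $k$, the distribution of $\ip{X}{Y}$ is so smooth across nearby level sets that a protocol solving $\egh$ on $U_{n,k}$ with advantage over random can be bootstrapped into a protocol that distinguishes $\ip{X}{Y} \gtrsim \sqrt n$ from $\ip{X}{Y} \lesssim -\sqrt n$ on the uniform distribution, i.e.\ solves $\gh$. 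Concretely, I would first restrict attention to the values of $n$ for which $\sqrt n$ is (close to) an odd integer, so that $k = \lceil \sqrt n\rceil$ and $-k$ are distinct mod $4$ — this is exactly the regime where the trivial mod-$4$ protocol does \emph{not} already compute $\egh_n$, and it is an infinite set of $n$'s, matching the statement.

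The main steps, in order: (1) Fix $\beta$ slightly above $1$ (say $\beta$ so that we are in the regime $|A|\cdot|B| \geq 2^{1.01n}$) and apply Theorem~\ref{thm:mainP} to conclude that for $X,Y$ uniform and independent on $\{\pm1\}^n$, $\bigl|\Pr[\ip{X}{Y}=k'] - \Pr[\ip{X}{Y}=k'+4]\bigr| \leq C/n + 2^{-\epsilon n}$ for all $k'$; iterating this across $O(\sqrt n)$ steps of size $4$ shows that all the ``weights'' $w_{k'} := \Pr[\ip{X}{Y} = k']$ for $k'$ ranging over an interval of length $\Theta(\sqrt n)$ around $\pm k$ agree up to $O(1/\sqrt n)$, and each is $\Theta(1/\sqrt n)$. (2) Use this to show $U_{n,k}$ is, up to total-variation error $O(1/\sqrt n)$, a mixture of the conditional distributions $U_{n,k'}$ for $k'$ in a window of size $\Theta(\sqrt n)$ with nearly-uniform mixing weights — equivalently, the uniform distribution on $\{(x,y): |\ip{x}{y} - k| = O(\sqrt n)\} \cup \{(x,y): |\ip{x}{y}+k| = O(\sqrt n)\}$ is close to $U_{n,k}$ in an averaged sense. (3) Take a hypothetical $\alpha n$-communication protocol $\pi$ solving $\egh_n$ on $U_{n,k}$ with probability $2/3$; by an averaging/union argument over the $O(\sqrt n)$ nearby level sets and the smoothness from step (1), $\pi$ (or a public-coin randomized shift of it, shifting one player's input to move the inner product by a controlled amount — note a single coordinate flip changes $\ip{x}{y}$ by $\pm 2$) must solve $\egh_{n,k'}$ with non-trivial advantage for many $k'$ simultaneously, which combines into a protocol solving $\gh_{n,c\sqrt n}$ for an appropriate constant $c$ on the uniform hard distribution, contradicting the Chakrabarti–Regev $\Omega(n)$ lower bound (valid for gap parameter $\Theta(\sqrt n)$).

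The delicate point — and the place I expect the real work to be — is step (3): turning ``correct on $\egh$ at the single level $k$'' into ``correct on $\gh$,'' because $\gh$ asks the protocol to be correct on \emph{all} inputs with $\ip{x}{y} \geq c\sqrt n$, not just one level set, whereas $\egh$ gives us information only at level $k$. The fix is to have the players use shared randomness to randomly perturb their inputs (flip a random subset of coordinates in a correlated way, or append/delete a small number of dummy coordinates) so that an input with $\ip{x}{y} = m \geq c\sqrt n$ is mapped to an input whose inner product is exactly $k$ with probability $\Omega(1/\sqrt n)$ — and, crucially, an input with $\ip{x}{y} = m \leq -c\sqrt n$ is mapped to inner product exactly $-k$ with comparable probability, while the cross-terms (mapping a positive-side input to $-k$ or vice versa) are negligible. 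Here smoothness is exactly what guarantees these hitting probabilities are balanced up to $1\pm o(1)$ across the relevant window, so the reduction does not introduce a bias that swamps the protocol's $1/6$ advantage. One must also be careful that the perturbation keeps the induced distribution on $\{\pm1\}^n\times\{\pm1\}^n$ close to the hard distribution for $\gh$; I would handle this by perturbing only $o(n)$ coordinates (enough to move the inner product by $\Theta(\sqrt n)$) so the distribution changes by $o(1)$ in the relevant statistics, and absorb the resulting loss into the constant $\alpha$. The remaining ingredients — the iterated smoothness estimate and the $\Theta(1/\sqrt n)$ lower bound on each $w_{k'}$ — are routine given Theorem~\ref{thm:mainP} and standard binomial estimates.
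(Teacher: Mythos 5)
Your approach fails at the very first step: you have the parity condition backwards. The trivial two-bit protocol computes $\ip{x}{y} \bmod 4$ exactly, so on the promise set $\{\ip{x}{y} = \pm k\}$ it computes $\egh_{n,k}$ \emph{whenever} $k \not\equiv -k \pmod 4$, i.e.\ whenever $k$ is odd. Thus the values of $n$ you restrict to (those where $k=\lceil\sqrt n\rceil$ and $-k$ are distinct mod $4$) are precisely the \emph{easy} values: for them the randomized communication complexity of $\egh_n$ is $2$, the theorem's conclusion is false, and no argument can establish an $\Omega(n)$ bound there. The hard regime is the opposite one, $k$ even (so $k \equiv -k \pmod 4$), which is exactly why the paper's distributional lower bound (Theorem~\ref{thm:LBgen}) is stated for even $n,k$ and why its proof uses the smoothness bound of Theorem~\ref{thm:mainP} with step size $4$ to show that on every typical rectangle the conditional probabilities of $\ip{X}{Y}=k$ and $\ip{X}{Y}=-k$ are nearly equal, forcing constant error under $U_{n,k}$. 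Theorem~\ref{thm:LBgen1} is then deduced not by a reduction to $\gh$ but by a cheap reduction in the other direction: repeating each bit $t$ times, multiplying by a shared random sign vector, and randomly permuting coordinates turns an instance of $\egh_{n/t,\,\sqrt n/t}$ (with $n/t$ and $\sqrt n/t$ even) into a $U_{tn/t\cdot t}$-distributed instance of $\egh_{n}$ with inner product scaled by $t$, so a protocol for $\egh_n$ yields one for the even-parameter problem, where the linear bound applies.

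Even ignoring the parity issue, your step (3) has a quantitative gap. Your players cannot compute $\ip{x}{y}$, so they cannot tell whether the random perturbation actually landed on the level set $\{\pm k\}$; the hitting probability is only $\Theta(1/\sqrt n)$, and on the complementary event (probability $1-O(1/\sqrt n)$) the $\egh$ protocol's output is arbitrary and possibly adversarially correlated with the input, since its $2/3$ guarantee is only an average over $U_{n,k}$. A single invocation therefore yields at best an $O(1/\sqrt n)$ advantage for $\gh$, and amplifying that to constant advantage requires $\Omega(n)$ independent invocations, each costing $\alpha n$ bits, so the simulating protocol's communication is $\Omega(n^2)$ and no contradiction with the Chakrabarti--Regev $\Omega(n)$ bound for $\gh$ follows. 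To make a reduction of this kind work you would need a way to condition on (or detect) the event that the perturbed pair lies on the exact level set without communication, which is not available; the paper avoids this entirely by proving the $\egh$ lower bound directly against the distribution $U_{n,k}$ via the rectangle argument sketched above.
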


There is a natural reduction between different parameters $n,k$, and from randomized protocols to distributional protocols. It turns out that the following theorem is stronger:

\begin{theorem}
	\label{thm:LBgen}
	For every $\beta>0$, there are constants $n_0 > 0$ and $\alpha > 0$ so that the following holds. Let $n,k$ be positive even integers so that $n > n_0$ and
	$k < \alpha \sqrt{n}$. 
	Any protocol that computes $\egh_{n,k}$
	over inputs from $U_{n,k}$ with success probability $2/3$ must have communication complexity at least $(1-\beta) n$.
\end{theorem}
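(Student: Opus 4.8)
The plan is to reduce the distributional lower bound for $\egh_{n,k}$ to the smoothness statement in Theorem~\ref{thm:mainP}, via a standard corruption/rectangle argument. Suppose a deterministic protocol $\Pi$ of cost $c$ computes $\egh_{n,k}$ on inputs drawn from $U_{n,k}$ with success probability $2/3$. By the usual averaging, it is enough to work with deterministic protocols under the distribution $U_{n,k}$. A transcript of $\Pi$ partitions $\{\pm1\}^n \times \{\pm1\}^n$ into at most $2^c$ combinatorial rectangles $R = A \times B$, and $\Pi$ outputs a fixed value on each. For the protocol to succeed with probability $2/3$ over $U_{n,k}$, a constant fraction of the $U_{n,k}$-mass must lie on rectangles that are \emph{monochromatic-ish}: on such a rectangle $R = A\times B$ labeled $1$, the number of pairs with $\ip{x}{y} = k$ is much larger than the number with $\ip{x}{y} = -k$ (and symmetrically for label $0$). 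So it suffices to show that in every large rectangle $A \times B \subseteq \{\pm1\}^n \times \{\pm1\}^n$, the number of pairs with $\ip{x}{y}=k$ and the number with $\ip{x}{y}=-k$ are roughly equal --- more precisely, their ratio is close to $1$ --- so that no large rectangle can be ``balanced enough'' toward one of the two values to account for the required advantage. This contradicts $c \le (1-\beta)n$ once $|A|\cdot|B|$ is forced to be large, i.e. $|A|\cdot|B| \ge 2^{(1-\beta/2)n} \cdot 2^{2n}/2^{c}$-type bounds.

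The heart of the matter is the ``balance in rectangles'' claim: if $A, B \subseteq \{\pm1\}^n$ with $|A|\cdot|B| \ge 2^{(1-\beta')n}$ (for $\beta'$ small depending on $\beta$), and $X\in A$, $Y\in B$ are uniform and independent, then $\Pr[\ip{X}{Y} = k] \le (1+o(1))\,\Pr[\ip{X}{Y} = -k]$ and vice versa, at least when $k \le \alpha\sqrt n$ and $n,k$ even. Here is where Theorem~\ref{thm:mainP} enters. Write $\beta$ for the exponent of $|B|$ and pick $\epsilon$ so that $2^{(1-\beta+2\epsilon)n} \le |A|$; then for all but a negligible fraction of $x$ (hence with $X$ uniform on $A$, up to an additive $2^{-\epsilon n}$), we have $\big|\Pr_Y[\ip{x}{Y}=j] - \Pr_Y[\ip{x}{Y}=j+4]\big| \le C/n$ for all $j$. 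Chaining this smoothness estimate along the arithmetic progression $-k, -k+4, -k+8, \dots, k$ --- which has about $k/2 \le \alpha\sqrt n / 2$ steps (using $k$ even, and $k \equiv -k \pmod 4$ when $k$ is even, so the endpoints are genuinely connected by steps of $4$) --- gives
$$\big|\Pr_Y[\ip{x}{Y}=k] - \Pr_Y[\ip{x}{Y}=-k]\big| \le \frac{C}{n}\cdot\frac{k}{2} \le \frac{C\alpha}{2\sqrt n}.$$
On the other hand, for a typical $x$ the individual probabilities $\Pr_Y[\ip{x}{Y}=\pm k]$ are themselves of order $1/\sqrt n$ on the $U_{n,k}$-conditioned mass; taking $\alpha$ small enough makes the difference a small fraction of the sum, which is exactly the statement that the rectangle is nearly balanced between the two outputs. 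Averaging this over the rectangles of $\Pi$ that carry $\Omega(1)$ of the $U_{n,k}$-mass shows $\Pi$ cannot beat probability $\tfrac12 + o(1) < \tfrac23$ unless $c \ge (1-\beta)n$.

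I expect the main obstacle to be the bookkeeping around \emph{conditioning on} $U_{n,k}$ rather than on the uniform distribution over the whole cube: Theorem~\ref{thm:mainP} controls $\Pr_Y[\ip{x}{Y}=k]$ for $Y$ uniform in $B$, but $U_{n,k}$ is the uniform distribution on pairs with $\ip{x}{y} = \pm k$, so inside a protocol rectangle $R = A\times B$ one is really looking at the restriction of $U_{n,k}$ to $R$, whose marginals are not uniform on $A$ or $B$. The standard fix is to first note that $U_{n,k}$ is, up to a factor polynomial in $n$, dominated by the uniform distribution on the cube conditioned on $\{\ip{x}{y}\in\{\pm k\}\}$, and that the latter measure assigns mass $\Theta(1/\sqrt n)$ to each of the two events; one then runs the corruption argument with respect to the uniform distribution on $\{\pm1\}^n\times\{\pm1\}^n$, where the density of $U_{n,k}$ is $\Theta(\sqrt n)$ supported on the two slices $\ip{x}{y}=\pm k$, and converts a large-density rectangle under $U_{n,k}$ into a large ($\ge 2^{(1-\beta')n}\cdot 2^n$, say) rectangle under uniform. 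A second, more minor obstacle is handling the few ``bad directions'' $x$ excluded by Theorem~\ref{thm:mainP}: these contribute at most $2^{(1-\beta+2\epsilon)n}$ choices of $x$, which is a $2^{-\epsilon n}$ fraction of $A$ when $|A| \ge 2^{(1-\beta+3\epsilon)n}$, so their total $U_{n,k}$-mass inside any rectangle is negligible and can be absorbed into the error term. Finally, Theorem~\ref{thm:LBgen1} follows from Theorem~\ref{thm:LBgen} by taking $k = \lceil\sqrt n\rceil$ along the subsequence of $n$ for which $\lceil\sqrt n\rceil$ is even (an infinite set), together with the routine averaging that passes from randomized protocols succeeding on $U_{n,\lceil\sqrt n\rceil}$ to deterministic ones.
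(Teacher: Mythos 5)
Your proposal follows essentially the same route as the paper's proof: partition the inputs into at most $2^{(1-\beta)n}$ rectangles, apply the smoothness bound of Theorem~\ref{thm:mainP} inside each sufficiently large rectangle, chain it along the progression $-k,-k+4,\dots,k$ (about $k/2\le \alpha\sqrt{n}/2$ steps of size $4$, legitimate since $k\equiv -k \pmod 4$ for even $k$), and compare the resulting $O(\alpha/\sqrt n)$ difference with the $\Theta(1/\sqrt n)$ conditional mass of the event $\ip{X}{Y}\in\{\pm k\}$, handling the $U_{n,k}$-conditioning exactly as the paper does, by viewing $U_{n,k}$ as uniform on the cube conditioned on that event and restricting to ``typical'' rectangles. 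The one small imprecision is that what is needed is a lower bound on $\Pr[\,|\ip{X}{Y}|=k \mid R\,]$ for the rectangles that matter (rectangles where this is much below $p/\sqrt n$ are discarded because they collectively carry negligible $U_{n,k}$-mass), rather than per-point probabilities being $\Theta(1/\sqrt n)$ inside every rectangle; this is precisely the bookkeeping you flag, and the paper's typicality argument resolves it as you anticipate.
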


Theorems ~\ref{thm:LBgen1}  and ~\ref{thm:LBgen} are proved in Section ~\ref{sec:egh}. The results are sharp in the following two senses.
First, if $k \neq n \mod 2$ then
$\egh_{n,k}$ is trivial,
and if $k$ is odd then the deterministic communication
complexity of $\egh_{n,k}$ is~$2$.
Secondly, for every $\alpha > 0$, there is $\beta > 0$
so that if $k > \alpha \sqrt{n}$
then the randomized communication complexity of
$\egh_{n,k}$ is at most $(1-\beta)n$. We sketch a randomized protocol for this here.
In the randomized protocol, Alice gets $x$,
Bob gets $y$ and
the public randomness is a sequence
$I_1,I_2,\ldots,I_m$ of i.i.d.\ uniform elements in $[n]$
for $m \leq O(\tfrac{n}{\alpha^2})$. Although $m$ is a constant factor larger than $n$, a standard coupon collector argument shows that 
the number of (distinct) elements in 
the set $S = \{I_1,\ldots,I_m\}$ is at most $(1-\beta)n-1$ with probability
at least~$\tfrac{5}{6}$.
If $|S| > (1-\beta)n-1$, the parties ``abort'',
and otherwise Alice sends to Bob the value of $x_s$ for all $s \in S$.
Bob uses this data to compute
$z = 1+\mathsf{sign} \big( \sum_{j=1}^m x_{I_j}y_{I_j}\big)/2$.
Bob sends the output of the protocol $z$ to Alice.
Chernoff's bound says that if $\egh_{n,k}(x,y) \neq \star$
then $\Pr[z = \egh_{n,k}(x,y)] \geq \tfrac{5}{6}$.
The union bound implies that the overall success probability 
is at least~$\tfrac{2}{3}$.

\subsection*{An application to Additive Combinatorics}

Additive combinatorics studies the behavior
of sets under algebraic operations~\cite{Taovu}.
It has many deep results,
and connections to other areas of mathematics,
as well as many applications in computer science.
Our main result can be interpreted
as showing that Hamming spheres are far from being sum-sets. Our results give quantitative bounds on the size of the intersection of any Hamming sphere with a sum-set.

Replace $\{\pm 1\}$ by the field $\F_2$
with two elements.
The sum-set of $A \subseteq \F_2^n$
and $B \subseteq \F_2^n$ is
$$A+B = \{ x + y : x \in A , y \in B\}.$$
If $X$ and $Y$ are sampled uniformly at random from $A$ and $B$, then $X+Y$ is supported on $A+B$.

The cube $\F_2^n$ is endowed with a natural metric---the Hamming distance $\Delta(x,y)$.
The sphere around $0$ is the collection
of all vectors with a fixed number of ones in them
{(a.k.a.\ a slice)}.
The inner product $I = \sum_j (-1)^{X_j} (-1)^{Y_j}$
is similar to the inner product studied above
(here $X_j,Y_j \in \{0,1\}$).
{The inner product is related to the Hamming distance
by} $I(X,Y) = n- 2 \Delta(X,Y)$. We saw that if $|A| \cdot |B| > 2^{1.01n}$, then $I$ is anti-concentrated.
We can conclude that the distribution of the Hamming
distance of $X+Y$ is anti-concentrated.
The set $A+B$ is far from any slice. In particular, our results imply that for almost all choices of $ a \in A$, we have that $|(a+B) \cap S| \leq O(|B|/\sqrt{n})$ for any slice $S$.


\subsection*{Techniques.}
Chakrabarti and Regev's proof  
uses the deep connection between the
discrete cube and Gaussian space.
They proved a geometric correlation inequality in 
Gaussian space, and then translated it to the cube.
Vidick~\cite{vidick2012concentration}
later simplified part of their argument,
but stayed in the geometric setting.
Sherstov~\cite{sherstov2012communication}
found a third proof
that uses Talagrand's inequality from convex geometry~\cite{talagrand1995concentration}
and ideas of Babai, Frankl and Simon
from communication complexity~\cite{babai1986complexity}.

There are several differences between our argument and the ones in~\cite{chakrabarti2012optimal,vidick2012concentration,sherstov2012communication}.
The main difference is that
the arguments from~\cite{chakrabarti2012optimal,vidick2012concentration,sherstov2012communication} are based, in one way or another,
on the geometry of Euclidean space.
The arguments in~\cite{chakrabarti2012optimal,vidick2012concentration}
prove a correlation inequality in Gaussian space and translate
it to the discrete world. 
It seems that such an argument
can not yield point-wise bounds on the
concentration probability. 
A common ingredient in~\cite{chakrabarti2012optimal,sherstov2012communication}
is a step showing that every set of large enough measure
contains many almost orthogonal vectors
(this is called `identifying the hard core' in~\cite{sherstov2012communication}).
In~\cite{vidick2012concentration} this part of the argument
is replaced by a statement about a relevant matrix.
Our argument does not contain such steps.

Let us briefly discuss our proof at a high level. 
The proof is based on harmonic analysis (Section~\ref{sec:aC}).
The argument consists of two parts.
In the first part, we analyze the Fourier behavior
of $\ip{x}{Y}$
for $x$ fixed and $Y$ random.
We are able to identify a collection
of good $x$'s for which the Fourier spectrum
of the distribution of $\ip{x}{Y}$ decays rapidly.
In the second part,
we show that the number of bad $x$'s
is small by giving an explicit encoding of all of them.

Although the proofs of Theorem ~\ref{thm:mainP} and Theorem ~\ref{thm:main1} follow similar strategies, 
we were not able to completely merge them. 

\section{Harmonic Analysis}
\label{sec:aC}
We are interested in proving
anti-concentration for integer-valued random variables. Harmonic analysis is a natural framework 
for studying such random variables~\cite{halasz1977estimates}.
Let $Y$ be distributed in $\{\pm 1\}^n$.
Let $x \in \Z^n$ be a direction. 
Let $\theta$ be uniformly distributed in $[0,1]$,
independently of $Y$.
The idea is to use
\begin{align*}
\Pr_Y[\ip{x}{Y} = k] &= \Ex{Y}{ \Ex{\theta}{  \exp(2\pi i \theta \cdot (\ip{x}{Y}-k)) }}
\end{align*}
to bound
\begin{align}
\notag
\max_{k \in \Z} \Pr_Y[\ip{x}{Y} = k] \leq  \Ex{\theta}{ \Big |\Ex{Y}{ \exp(2\pi i \theta \cdot\ip{x}{Y})} \Big | } . \tag{$\star$}
\end{align}
This inequality is useful for two reasons.
First, the left hand side is a maximum over $k$,
while the right hand side is not.
So, there is one less quantifier to worry about.
Secondly, the right hand side lives in the Fourier
world, where it is easier to argue about
the underlying operators. 
For example, when the coordinates of $Y$
are independent, the expectation over $Y$
breaks into a product of $n$ simple terms.

\section{The main technical Theorem}
\label{sec:HC}

Our main technical bound is proved in this section. The following theorem controls the Fourier coefficients in most 
directions.

\begin{theorem} \label{thm:tech} 
For every $\beta >0$
and $\delta>0$,
there is $c >0$ so that the following holds.
Let $B \subseteq \nbits^n$ be of size $2^{\beta n}$.
For each $\theta \in [0,1]$, 
for all but $2^{n(1-\beta+\delta)}$ directions
$x \in \{\pm 1\}^n$,
\begin{align*}
\left |\Ex{Y}{ \exp(2\pi i \theta \cdot\ip{x}{Y})} \right | < 
 2 \exp (- c n \sin^2(4 \pi \theta )) 
 \end{align*} 
\end{theorem}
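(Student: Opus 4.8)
The plan is to bound the expectation $\Ex{Y}{\exp(2\pi i \theta \ip{x}{Y})}$ by comparing it to the fully independent case. Write $f(x) = \Ex{Y}{\exp(2\pi i \theta \ip{x}{Y})}$, where $Y$ is uniform on $B$. If $Y$ were uniform on all of $\{\pm 1\}^n$, this would factor as $\prod_{j=1}^n \cos(2\pi\theta)$, which has magnitude $|\cos(2\pi\theta)|^n \le \exp(-cn\sin^2(2\pi\theta))$ for a suitable constant $c$ — a clean exponential decay. The subtlety is (a) $Y$ is uniform only on $B$, which may be as small as $2^{\beta n}$, and (b) the target decay rate involves $\sin^2(4\pi\theta)$ rather than $\sin^2(2\pi\theta)$, so one loses control near $\theta \approx 1/4$ (and $3/4$), which is exactly the regime where $\ip{x}{Y} \bmod 4$ becomes degenerate — consistent with the $\egh$ discussion in the introduction.

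The first step is to pass from $B$ to the full cube at the cost of a Fourier/counting argument. Consider the $2^n \times 2^n$ unitary (up to scaling) matrix $M$ with entries $M_{x,y} = 2^{-n/2}\exp(2\pi i \theta \ip{x}{y})$; this is (a rescaling of) a tensor power of a $2\times 2$ matrix, $M = 2^{-n/2}\bigotimes_{j=1}^n \begin{pmatrix} e^{2\pi i\theta} & e^{-2\pi i\theta} \\ e^{-2\pi i\theta} & e^{2\pi i\theta}\end{pmatrix}$. Then $f(x) = \tfrac{2^{n/2}}{|B|}(M \mathbf{1}_B)(x)$, where $\mathbf{1}_B$ is the indicator of $B$. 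Since $M$ is, up to a scalar of modulus one, a product of $2\times 2$ matrices each of operator norm $\sqrt{2}\,\sqrt{1 + |\cos(4\pi\theta)|}$ — here the factor $\cos(4\pi\theta) = 2\cos^2(2\pi\theta) - 1$ enters naturally from computing the singular values of the $2\times 2$ block — one gets $\|M\mathbf{1}_B\|_2^2 \le \big(1 + |\cos(4\pi\theta)|\big)^n \|\mathbf{1}_B\|_2^2 = \big(1+|\cos(4\pi\theta)|\big)^n |B|$. Therefore $\sum_{x} |f(x)|^2 \le \tfrac{2^n}{|B|^2}\big(1+|\cos(4\pi\theta)|\big)^n|B| = \tfrac{2^n}{|B|}\big(1+|\cos(4\pi\theta)|\big)^n$.

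The second step is Markov's inequality: the number of $x$ with $|f(x)|^2 \ge T$ is at most $\tfrac{1}{T}\sum_x|f(x)|^2 \le \tfrac{2^n}{T|B|}(1+|\cos(4\pi\theta)|)^n$. We want this count to be at most $2^{n(1-\beta+\delta)} = \tfrac{2^n}{|B|} 2^{\delta n}$, so it suffices to take $T = 2^{-\delta n}(1+|\cos(4\pi\theta)|)^n$. Thus for all but $2^{n(1-\beta+\delta)}$ directions $x$, $|f(x)|^2 < 2^{-\delta n}(1+|\cos(4\pi\theta)|)^n$, i.e. $|f(x)| < 2^{-\delta n/2}\big(1+|\cos(4\pi\theta)|\big)^{n/2}$. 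Now I must convert the bound $\big(1+|\cos(4\pi\theta)|\big)^{n/2}$ into the claimed form $2\exp(-cn\sin^2(4\pi\theta))$. Write $1 + |\cos(4\pi\theta)| = 2 - (1-|\cos(4\pi\theta)|) \le 2\exp\!\big(-\tfrac{1}{2}(1-|\cos(4\pi\theta)|)\big)$, and since $1 - |\cos(4\pi\theta)| \ge \tfrac{1}{2}\sin^2(4\pi\theta)$ (as $1-|c| \ge 1 - c^2 = \sin^2$ for $|c|\le 1$, and in fact $1-|c|\ge \tfrac12(1-c^2)$), we get $\big(1+|\cos(4\pi\theta)|\big)^{n/2} \le 2^{n/2}\exp\!\big(-\tfrac{n}{8}\sin^2(4\pi\theta)\big)$. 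Combining, $|f(x)| < 2^{-\delta n/2} 2^{n/2}\exp(-\tfrac{n}{8}\sin^2(4\pi\theta))$, and choosing $\delta$ appropriately — wait, this has an unwanted $2^{n/2}$ factor, so the Markov bound above is too weak as stated; one actually needs to bound $\sum_x |f(x)|^2$ more carefully, exploiting that $\|M\mathbf{1}_B\|_2^2$ should compare against $2^n$, not against a telescoped norm. The right move is: bound the count of bad $x$ by $\tfrac{1}{T}\sum_x |f(x)|^2$ and observe we are free to \emph{replace} the target decay — we want bad-count $\le 2^{n(1-\beta+\delta)}$ with the bound value being $4\exp(-2cn\sin^2(4\pi\theta))$; set $T = 4\exp(-2cn\sin^2(4\pi\theta))$ and require $\tfrac{2^n(1+|\cos 4\pi\theta|)^n}{4|B|\exp(-2cn\sin^2(4\pi\theta))} \le \tfrac{2^n}{|B|}2^{\delta n}$, i.e. $(1+|\cos 4\pi\theta|)^n \exp(2cn\sin^2(4\pi\theta)) \le 4\cdot 2^{\delta n}$. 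Using $1+|\cos 4\pi\theta| = 2 - (1-|\cos 4\pi\theta|) \le 2e^{-(1-|\cos 4\pi\theta|)} \le 2 e^{-\sin^2(4\pi\theta)/2}$, the left side is at most $2^n \exp\!\big((2c - \tfrac12)n\sin^2(4\pi\theta)\big)$; choosing $c \le 1/4$ makes the exponent $\le 0$, leaving $2^n$, which is \emph{not} $\le 4\cdot 2^{\delta n}$.

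\textbf{The main obstacle.} As the last computation exposes, the naive second-moment/Markov argument loses a factor of roughly $2^{n}$ — the operator-norm bound $\|M\mathbf{1}_B\|_2 \le \sqrt{1+|\cos 4\pi\theta|}^{\,n}\|\mathbf 1_B\|_2$ is wasteful because it treats $M$ as a generic operator of that norm, whereas $M$ is unitary up to scale ($M$ has all singular values equal to... no — the $2\times 2$ block does \emph{not} have equal singular values unless $\cos 4\pi\theta = 0$). The real point must be subtler: one should not bound $\sum_x |f(x)|^2$ over \emph{all} $x$, but rather show that $|f(x)|$ can only be large for $x$ lying in a \emph{structured, small} set, via the explicit-encoding idea the introduction advertises ("we show that the number of bad $x$'s is small by giving an explicit encoding of all of them"). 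Concretely: if $|f(x)| = |\Ex{Y}{e^{2\pi i\theta\ip{x}{Y}}}|$ is close to $1$, then $e^{2\pi i\theta \ip{x}{Y}}$ is nearly constant over $Y \in B$, meaning $\theta\ip{x}{y}$ is nearly constant mod $1$ on $B$; since $B$ spans enough directions (it has $2^{\beta n}$ elements, so contains many pairs $y, y'$ with $y - y'$ of controlled, varied support), this forces $\theta \cdot 2\sum_{j \in \Delta} x_j$ to be near $0 \bmod 1$ for many difference-sets $\Delta$ — a strong linear constraint on $x$ that only $2^{n(1-\beta+\delta)}$ many $x$ can satisfy once we account for $\sin^2(4\pi\theta)$ bounding how close to $0 \bmod 1$ these can be. I would carry out: (i) the reduction $(\star)$ already granted; (ii) a Fourier expansion of $|f(x)|^2 = \Ex{Y,Y'}{e^{2\pi i\theta\ip{x}{Y-Y'}}}$ over independent $Y, Y' \in B$; (iii) relate this to a random-walk / self-convolution of $\mathbf 1_B$ and bound it by $\exp(-cn\sin^2(4\pi\theta))$ for $x$ outside an explicitly-encodable exceptional set; (iv) bound the size of that exceptional set by the encoding argument. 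Step (iii)–(iv) — making the trade-off between the decay rate $\sin^2(4\pi\theta)$ and the encoding length $n(1-\beta+\delta)$ tight, and handling the degeneracy at $\theta \in \{1/4, 3/4\}$ where no decay is available — is where essentially all the work lies, and is presumably the content of Section~\ref{sec:HC} proper.
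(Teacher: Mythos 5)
There is a genuine gap. Your first, concrete argument (tensor-norm bound on $M\mathbf{1}_B$ plus Markov) fails, as you yourself discover, and the failure is not just an artifact of being wasteful with the operator norm: writing $\sum_x |f(x)|^2 = \tfrac{2^n}{|B|^2}\sum_{y,y'\in B}\cos(4\pi\theta)^{\Delta(y,y')}$, the diagonal terms alone contribute $2^{n(1-\beta)}$, and for structured $B$ (e.g.\ a subcube or a small Hamming ball, where most pairs are close) the sum can approach $2^n$. Since the theorem demands that all but $2^{n(1-\beta+\delta)}$ directions satisfy a bound that is \emph{exponentially small} in $n\sin^2(4\pi\theta)$, no global second-moment/Markov argument at threshold $\exp(-\Theta(n\sin^2(4\pi\theta)))$ can deliver this: the bad-count estimate it produces is off by a factor exponential in $n$ exactly when $\sin^2(4\pi\theta)$ is of constant order. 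So the approach must be abandoned, not repaired.

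Your fallback paragraph correctly guesses the high-level shape of the paper's argument (the introduction already announces the encoding of bad directions), but it leaves undone precisely the steps that constitute the proof. Two ingredients are missing. First, a pointwise recursion replacing the heuristic ``$|f(x)|\approx 1$ forces $\theta\ip{x}{y}$ nearly constant on $B$'': the paper's Lemma~\ref{lemma:tech} reveals $Y$ coordinate by coordinate and proves $|\Ex{Y}{e^{i\eta\ip{x}{Y}}}|^2 \le \Ex{Y}{\prod_j (1-\gamma_j\sin^2(\phi_j+x_j\eta))}$, where $\gamma_j$ measures the conditional entropy of $Y_j$ and $\phi_j$ is a phase determined by $y_{<j}$ and $x_{>j}$; this converts the problem into showing that, for most $x$, many coordinates have both $\gamma_j\ge\kappa$ and $\sin^2(\phi_j+2\pi\theta x_j)\ge\sin^2(4\pi\theta)/4$ (which is where the $4\pi\theta$ and hence the step size $4$ come from). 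Second, the quantitative encoding: Lemma~\ref{lemma:encoding1} shows few $y\in B$ have few high-entropy coordinates, and Lemma~\ref{lemma:encoding2} encodes every bad $x$ by its values off $J(y)$, the small set $G(x,y)$, and its values on $G(x,y)$, crucially using Claim~\ref{clm:OneGoodChoice} --- for a fixed phase $\phi_j$ at most one of the two choices $x_j=\pm1$ can make $\sin^2(\phi_j+2\pi\theta x_j)$ small --- so the remaining coordinates of $x$ are reconstructible for free, in reverse order since $\phi_j$ depends on $x_{>j}$. Your sketch names neither the conditional-phase quantities nor the one-good-choice phenomenon, and it is exactly these, together with the parameter bookkeeping tying the encoding length to $n(1-\beta+\delta)$, that carry the trade-off you flag as ``where essentially all the work lies.'' As written, the proposal does not prove the theorem.
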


The rest of this section is devoted to proving the theorem.

\subsection{A Single Direction}
\label{sec:fourierbound}
In this section we analyze
the behavior of $\ip{x}{Y}$ for a single direction $x \in \Z^n$.
{We also focus on a single Fourier coefficient $\Ex{Y}{\exp({i \eta \ip{x}{Y}})}$ for a fixed
angle $\eta \in [0,2\pi]$.}

We reveal the entropy of $Y$ coordinate by coordinate.
To keep track of this entropy, define
the following functions $\gamma_1,\ldots,\gamma_n$
from $B = \text{supp}(Y)$ to $\R$.
For each $j \in [n]$, let
$$\gamma_j(y) = \gamma_j(y_{<j}) 
= \min_{\epsilon \in \nbits} \Pr[Y_j = \epsilon | Y_{<j}= y_{<j}].$$

To understand the interaction between $x$
and $y$, we use the following $n$ measurements.
For $j \in [n-1]$,
define $\phi_j(x,y)$ to be half of the phase of the complex number 
$$\Ex{Y_{>j}|Y_j=1,Y_{<j}=y_{<j}}{
\exp({i \eta \ip{x_{>j}}{Y_{>j}}})} \cdot \overline{\Ex{Y_{>j}|Y_j=-1,Y_{<j}=y_{<j}}{\exp({i \eta \ip{x_{>j}}{Y_{>j}}})}}.$$
This quantity is not defined when $\gamma_j(y)=0$.
In this case, set $\phi_j(x,y)$ to be zero. 
Define $\phi_n(x,y)$ to be zero.
The number $\phi_j(x,y)$ is determined by $y_{< j}$ and $x_{>j}$.

In the following we think of $x$ as fixed,
and of $\gamma_j$ and $\phi_j$ as random variables
that are determined by the random variable $Y$.

\begin{lemma} \label{lemma:tech} 
For each $x \in \R^n$,
every random variable $Y$ over $\{\pm 1\}^n$,
and every angle $\eta \in \R$,
{\begin{align*}
	\left |\Ex{Y}{\exp({i \eta \ip{x}{Y}})} \right |^2 & \leq \Ex{Y}{
\prod_{j \in [n]} (1- \gamma_j \sin^2( \phi_j + x_j \eta ))}.
	\end{align*}}
\end{lemma}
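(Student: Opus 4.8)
The plan is to reveal the coordinates of $Y$ one at a time, from $j=1$ to $j=n$, and to show that conditioning on each new coordinate can only contract the relevant Fourier mass by a factor of $(1-\gamma_j\sin^2(\phi_j+x_j\eta))$. Write $F(y_{<j}) = \Ex{Y_{>j-1}\mid Y_{<j}=y_{<j}}{\exp(i\eta\ip{x_{\geq j}}{Y_{\geq j}})}$ for the ``tail character'' given the prefix $y_{<j}$; note $|\Ex{Y}{\exp(i\eta\ip{x}{Y})}|^2 = |F(\emptyset)|^2$ and $F(y_{<n+1})=1$ pointwise. The key is a one-step inequality: for a fixed prefix $y_{<j}$, splitting on the value of $Y_j$ gives
\begin{align*}
F(y_{<j}) = p\,e^{ix_j\eta}G_{+}(y_{<j}) + (1-p)\,e^{-ix_j\eta}G_{-}(y_{<j}),
\end{align*}
where $p=\Pr[Y_j=1\mid Y_{<j}=y_{<j}]$ and $G_{\pm}(y_{<j}) = \Ex{Y_{>j}\mid Y_j=\pm1, Y_{<j}=y_{<j}}{\exp(i\eta\ip{x_{>j}}{Y_{>j}})}$. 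By definition $\phi_j$ is half the phase of $G_{+}\overline{G_{-}}$, so after factoring out a global phase we may treat $e^{ix_j\eta}G_{+}$ and $e^{-ix_j\eta}G_{-}$ as two complex numbers whose arguments differ by $2(\phi_j+x_j\eta)$.

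The first main step is therefore the following elementary geometric fact: if $u,v\in\C$ with $|u|,|v|\le 1$ and the angle between $u$ and $v$ is $2\psi$, and $p\in[0,1]$, then $|pu+(1-p)v|^2 \le 1-\gamma\sin^2\psi$ where $\gamma=\min(p,1-p)$. This is proved by the law of cosines: $|pu+(1-p)v|^2 = p^2|u|^2+(1-p)^2|v|^2+2p(1-p)|u||v|\cos(2\psi) \le p^2+(1-p)^2+2p(1-p)\cos(2\psi) = 1-2p(1-p)(1-\cos 2\psi) = 1-4p(1-p)\sin^2\psi \le 1-\gamma\sin^2\psi$, using $4p(1-p)\ge\min(p,1-p)$ and $|u|,|v|\le 1$ (each $G_\pm$ is an average of unit-modulus complex numbers, hence has modulus at most $1$). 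Applying this with $u=e^{ix_j\eta}G_+(y_{<j})$, $v=e^{-ix_j\eta}G_-(y_{<j})$, $\psi=\phi_j+x_j\eta$ yields
\begin{align*}
|F(y_{<j})|^2 \le \bigl(1-\gamma_j(y_{<j})\sin^2(\phi_j+x_j\eta)\bigr)\cdot\max\bigl(|G_+(y_{<j})|^2,|G_-(y_{<j})|^2\bigr).
\end{align*}
When $\gamma_j(y_{<j})=0$ the factor is $1$ and the inequality is the trivial bound $|F|^2\le\max(|G_+|^2,|G_-|^2)$, which matches the convention $\phi_j:=0$ there.

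The second step is to iterate. Rather than passing to a max, I will take expectations: since $|G_\pm(y_{<j})|^2 = |F(y_{<j},\pm1)|^2$ and $p\,|G_+|^2+(1-p)|G_-|^2 = \Ex{Y_j\mid Y_{<j}}{|F(Y_{\le j})|^2}$, I want a version of the one-step bound that replaces the $\max$ on the right by the conditional expectation $\Ex{Y_j\mid Y_{<j}}{|F(Y_{\le j})|^2}$. This needs a small strengthening: one shows $|F(y_{<j})|^2 \le \bigl(1-\gamma_j\sin^2(\phi_j+x_j\eta)\bigr)\bigl(p|G_+|^2+(1-p)|G_-|^2\bigr)$ — indeed from $|pu+(1-p)v|^2 = p|u|^2+(1-p)|v|^2 - p(1-p)|u-v|^2$ and $|u-v|^2\ge (|u|\sin 2\psi)^2$ type estimates one extracts exactly the claimed contraction relative to the \emph{average} of $|u|^2,|v|^2$; I will check this inequality directly. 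Granting it, define $H_j := \Ex{Y_{<j}}{\prod_{i<j}(1-\gamma_i\sin^2(\phi_i+x_i\eta))^{-1}\,|F(Y_{<j})|^2}$ — or more cleanly, argue by downward induction on $j$ that $\Ex{Y_{<j}}{|F(Y_{<j})|^2} \le \Ex{Y}{\prod_{i\ge j}(1-\gamma_i\sin^2(\phi_i+x_i\eta))}$, using the one-step bound and the tower property at each stage, with base case $j=n+1$ giving $\Ex{}{|F(y_{<n+1})|^2}=1=\Ex{}{\text{(empty product)}}$. Evaluating at $j=1$ gives $|\Ex{Y}{\exp(i\eta\ip{x}{Y})}|^2 = |F(\emptyset)|^2 \le \Ex{Y}{\prod_{j\in[n]}(1-\gamma_j\sin^2(\phi_j+x_j\eta))}$, which is the claim.

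The main obstacle I anticipate is the bookkeeping of the phases: verifying that $\phi_j$, defined as half the phase of $G_+\overline{G_-}$ and \emph{measurable with respect to $Y_{<j}$ and $x_{>j}$ only}, really is the quantity for which the geometric lemma gives the stated angle $\phi_j+x_j\eta$, and checking that the convention at $\gamma_j=0$ causes no trouble. The geometric inequality itself is routine trigonometry; the inductive assembly via the tower property is standard once the correct ``average'' (not ``max'') form of the one-step bound is in hand, so pinning that form down precisely is the only delicate point.
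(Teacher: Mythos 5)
Your proposal follows essentially the same route as the paper's proof: reveal $Y$ coordinate by coordinate and prove the one-step contraction $|F(y_{<j})|^2 \le (1-\gamma_j\sin^2(\phi_j+x_j\eta))\,\big(p\,|G_+|^2+(1-p)\,|G_-|^2\big)$ relative to the conditional \emph{average}, then iterate by induction/tower property --- the paper obtains this very inequality by expanding the square and splitting on the sign of $\cos(2\phi_j+2x_j\eta)$, while your route via $|pu+(1-p)v|^2=p|u|^2+(1-p)|v|^2-p(1-p)|u-v|^2$ together with $|u-v|^2\ge\sin^2(\psi)\,(|u|^2+|v|^2)$ and $p(1-p)\,(|u|^2+|v|^2)\ge\gamma\,(p|u|^2+(1-p)|v|^2)$ also yields it. One caveat: the law-of-cosines chain you give for the preliminary max-form fact (replacing $|u|,|v|$ by $1$) is invalid when $\cos 2\psi<0$ and the moduli are unequal (e.g.\ $p=0.9$, $|u|=1$, $|v|=0$, $\psi=\pi/2$), but this is harmless since your induction only uses the average-form bound, which is correct.
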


\begin{proof}
The proof is by induction on $n$.
We prove the base case of the  induction and the inductive  step 
simultaneously. 
Express
	\begin{align*}
\left |\Ex{Y}{\exp(i \eta \ip{x}{Y})} \right |^2  &= \left|\Ex{Y_1}{ \exp(i \eta  x_1 Y_1) \cdot \Ex{Y_{>1}|Y_1}{\exp(i \eta  \ip{x_{>1}}{Y_{>1}})}}\right|^2 \\
& = \left|p_1 \exp(i \eta x_1) Z_1 + p_{-1} \exp(- i \eta  x_1 ) Z_{-1}\right|^2,
\end{align*}
where for $\epsilon \in \{\pm 1\}$, $$p_\epsilon = \Pr[Y_1 = \epsilon]
\qquad \& \qquad Z_{\epsilon} = \Ex{Y|Y_1 =\epsilon}{\exp(i \eta \ip{x_{>1}}{Y_{>1}})}.$$
When $n=1$, we have $Z_1=Z_{-1}=1$.	 
Rearranging,
\begin{align*} 
&\left|p_1 \exp(i \eta  x_1) Z_1 + p_{-1} \exp(-i \eta  x_1 ) Z_{-1}\right|^2 \\ 
& = p_1^2 |Z_1|^2 + p_{-1}^2 |Z_{-1}|^2 + p_1 p_{-1} (Z_1 \overline{Z_{-1}} \exp(i2\eta  x_1)+ \overline{Z_1} Z_{-1} \exp(-i2\eta x_1)) \\
&= p_1^2 |Z_1|^2 + p_{-1}^2 |Z_{-1}|^2 + 2 p_1 p_{-1} |Z_1| |Z_{-1}|  \cos(2 \phi_1 + 2 x_1 \eta) .
\end{align*} 
The last equality holds by the definition of $\phi_1$.

{There are two cases to consider.
When $\cos(2 \phi_1 + 2 x_1 \eta) < 0$, we continue to bound
\begin{align*} 
& < p_1^2 |Z_1|^2 + p_{-1}^2 |Z_{-1}|^2  \\
& \leq (p_1 |Z_1|^2 + p_{-1} |Z_{-1}|^2)(1-\gamma_1) \\
& \leq (p_1 |Z_1|^2 + p_{-1} |Z_{-1}|^2)(1-\gamma_1 \sin^2(\phi_1 +x_1 \eta)) .
\end{align*} 
Recall that $\gamma_1$ and $\phi_1$
do not depend on $Y$.
When $\cos(2 \phi_1 + 2 x_1 \eta) \geq 0$,
using the inequality $a^2 + b^2 \geq 2ab$, we bound}
\begin{align*}
&\leq p_1^2 |Z_1|^2 + p_{-1}^2 |Z_{-1}|^2 + p_1 p_{-1}(|Z_1|^2+  |Z_{-1}|^2)  \cos(2\phi_1 + 2x_1 \eta) \\
&= p_1 |Z_1|^2 (p_1 + p_{-1} \cos(2\phi + 2x_1 \eta)) + p_{-1} |Z_{-1}|^2 (p_{-1} + p_{1} \cos(2\phi_1 + 2x_1 \eta)) \\
& \leq (p_1 |Z_1|^2 + p_{-1} |Z_{-1}|^2)   (1-\gamma_1 + \gamma_1 \cos(2 \phi_1 + 2x_1 \eta)) \\
& = \Ex{Y_1}{|Z_{Y_1}|^2}  (1 -2 \gamma_1  \sin^2(\phi_1 + x_1 \eta)) .
\end{align*}
When $n=1$, we have proved the base case of the induction.
When $n>1$, apply induction
on $|Z_\eps|^2$.
\end{proof}

\subsection{A Few Bad Directions} 
\label{sec:encoding}

Lemma~\ref{lemma:tech} suggests 
proving that the expression 
$$\sum_j \gamma_j \sin^2( \phi_j + x_j \eta )$$
is typically large. Namely, we aim to show that there are usually many coordinates $j$ for which 
both $\gamma_j$ and $\sin^2(\phi_j +  x_j \eta )$ are bounded away from zero.
{Our approach is to explicitly encode the cases
where this fails to hold.}

Recall that $Y$ is uniformly distributed in a set $B$ of size $|B| = 2^{\beta n}$.
Let $1\geq \lambda>1/n$ be a parameter.  
Set $0 < \kappa < \tfrac{1}{2}$ and $1\geq \tau>0$ to be parameters satisfying the conditions
\begin{align}
\label{1}
H\left(\tfrac{1}{\log(1/\kappa)}\right) =  \tau + H\left(\tau \right)= \lambda , 
\end{align}
where $H$ is the binary entropy function: $$H(\xi) = \xi \log(1/\xi) + (1-\xi) \log(1/(1-\xi)).$$

The encoding is based on the following two sets:
$$J(y)  = J_{B,\kappa}(y) = \{ j \in [n] : \gamma_j(y) \geq \kappa\}$$ 
and
$$G(x,y) = G_{B,\kappa,\theta}(x,y) = \Big\{j \in J(y) : \sin^2(\phi_j(x,y) + x_j \eta ) \geq \tfrac{\sin^2(2\eta)}{4}\Big\}.$$
We start by showing that there 
are few $y$'s for which $|J(y)|$ is small.

\begin{lemma}\label{lemma:encoding1}
The number of $y \in B$ with $|J(y)| \leq n(\beta -3\lambda)$ is at most $2^{n(\beta - 2\lambda)}$.
\end{lemma}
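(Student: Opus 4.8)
The plan is to bound the number of bad $y$'s by giving each one a short description. Fix $\kappa$ as in~\eqref{1}, and suppose $y \in B$ satisfies $|J(y)| \le n(\beta - 3\lambda)$. I want to argue that $y$ is determined by roughly $n(\beta - 3\lambda) + (\text{small})$ bits, which is fewer than $\log|B| = \beta n$, leaving room for only $2^{n(\beta - 2\lambda)}$ such strings. The encoding is the natural one suggested by the definition of $\gamma_j$: reveal the coordinates of $y$ one at a time; at step $j$, if $j \in J(y)$ (i.e.\ $\gamma_j(y_{<j}) \ge \kappa$) spend one full bit to record $y_j$, and if $j \notin J(y)$ then the conditional distribution of $Y_j$ given $y_{<j}$ is lopsided — one value has probability less than $\kappa$ — so $y_j$ can be recorded more cheaply. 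Concretely I would use an arithmetic-coding / prefix-free argument: the map $y \mapsto y$ restricted to the bad set can be compressed to expected length (hence, after a standard truncation, worst-case length up to an additive constant) at most $|J(y)| + \sum_{j \notin J(y)} H(\text{conditional bias})$, and each term in the second sum is at most $H(\kappa)$.

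The cleaner way to run this, avoiding fractional-bit bookkeeping, is a direct counting bound. Since $Y$ is uniform on $B$, for any prefix $y_{<j}$ with $\gamma_j(y_{<j}) = \gamma < \kappa$, the minority branch contains at most a $\gamma$-fraction (hence $< \kappa$-fraction) of the strings extending $y_{<j}$. So fix the set $T \subseteq [n]$ of ``cheap'' coordinates (those not in $J(y)$) — there are at most $2^n$ choices of $T$, but more usefully I only need $|T| \ge n - n(\beta-3\lambda) = n(1 - \beta + 3\lambda)$, and I will absorb the choice of $T$ into the final slack. Walking down the prefix tree of $B$ and branching only according to whether each coordinate is majority/minority, the fraction of $y \in B$ that take the minority branch at a prescribed set $S$ of cheap coordinates is at most $\kappa^{|S|}$. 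Summing the number of bad $y$'s: a bad $y$ has at least $n(1-\beta+3\lambda)$ cheap coordinates, and I can specify, for each bad $y$, the identity of $J(y)$ (the location of its $\le n(\beta - 3\lambda)$ expensive coordinates), the values of $y$ on those coordinates, and then $y$ is forced on at least a $1/\kappa$-shrinking set at each remaining coordinate. This gives
\begin{align*}
\#\{\text{bad }y\} \ \le\ \sum_{\substack{J \subseteq [n]\\ |J| \le n(\beta-3\lambda)}} 2^{|J|}\, \kappa^{\,n - |J|} \cdot |B|
\ \le\ 2^{H(\beta - 3\lambda)n}\cdot 2^{n(\beta-3\lambda)}\cdot \kappa^{\,n(1-\beta+3\lambda)}\cdot 2^{\beta n},
\end{align*}
where I used $\binom{n}{\le m} \le 2^{H(m/n)n}$. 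Here the last three factors must be checked to multiply to at most $2^{n(\beta-2\lambda)}/2^{H(\beta-3\lambda)n}$, i.e.\ one needs $(1-\beta+3\lambda)\log(1/\kappa) \ge \lambda + H(\beta-3\lambda) + \beta - (\beta - 2\lambda)$ roughly; this is exactly the kind of inequality that~\eqref{1} (which pins $\log(1/\kappa)$ as a function of $\lambda$ via $H(1/\log(1/\kappa)) = \lambda$) is designed to make true for $\lambda$ small and $n$ large. I would verify this constant-chasing step at the end.

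The main obstacle, and the only place real care is needed, is making the informal ``cheap coordinate costs $\log(1/\kappa)$ bits'' precise while the thresholds $\gamma_j < \kappa$ are \emph{adaptive} — the set $J(y)$ depends on $y$ itself. The fix is to not fix $J$ in advance but to encode it as part of the description (paying the $2^{H(\beta-3\lambda)n}\cdot 2^{n(\beta-3\lambda)}$ overhead above) and to observe that, conditioned on which coordinates are expensive and what $y$ is there, each cheap coordinate independently multiplies the surviving fraction of $B$ by at most $\kappa$ — this is a clean martingale/telescoping statement about the uniform measure on $B$ since $\gamma_j$ is literally the min conditional probability. Once that multiplicativity is in hand, the counting display above closes, and the choice of parameters~\eqref{1} guarantees that $H(\beta - 3\lambda)$ together with the other overhead terms is swallowed by the $\lambda n$ of slack between $\beta - 3\lambda$ and $\beta - 2\lambda$, provided $\lambda$ is taken small enough (which we may, since the statement only needs it for the ultimate application with $\lambda \to 0$). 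I would also handle the degenerate case $\gamma_j = 0$ separately — there the coordinate is completely determined, costing $0$ bits, which only helps.
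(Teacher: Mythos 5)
Your plan has two genuine flaws, and they interact. First, the step ``$y$ is forced on at least a $1/\kappa$-shrinking set at each remaining coordinate'' (equivalently, the factor $\kappa^{\,n-|J|}$ in your display) is false. For a cheap coordinate $j \notin J(y)$, only the \emph{minority} branch has conditional probability below $\kappa$; the majority branch has probability above $1-\kappa$, and a bad $y$ will typically take the majority value at most cheap coordinates. So conditioning on $J$ and on the values of $y$ over $J$ does not force $y$ on the cheap coordinates, and your encoding is not injective: you must additionally record the set $S$ of cheap coordinates where $y$ takes the minority value. You state the correct fact that the fraction of $B$ taking the minority branch on a prescribed $S$ is at most $\kappa^{|S|}$, but you never use it the way it must be used, namely to bound $|S|$ itself: since $Y$ is uniform on $B \subseteq \{\pm 1\}^n$, the telescoping product of conditional probabilities equals $\Pr[Y=y] \ge 2^{-n}$, hence $\kappa^{|S|} \ge 2^{-n}$ and $|S| \le n/\log(1/\kappa)$, so the number of choices for $S$ is at most $2^{H(1/\log(1/\kappa))n} = 2^{\lambda n}$. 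This is exactly what the condition $H(1/\log(1/\kappa)) = \lambda$ is for; without this step the number of possible $S$'s is unbounded (up to $2^{n-|J|}$) and the count does not close.

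Second, even after repairing that, your decision to pay explicitly for the identity of $J(y)$ is fatal, not a harmless ``overhead to absorb.'' The cost is $\binom{n}{\le n(\beta-3\lambda)} \approx 2^{H(\beta-3\lambda)n}$, a constant-rate factor (since $\beta$ is a fixed constant), and with the correct accounting the total becomes roughly $2^{n(H(\beta-3\lambda) + \beta - 3\lambda + \lambda)}$, which exceeds the target $2^{n(\beta-2\lambda)}$ for every $\lambda$, no matter how small; taking $\lambda \to 0$ does not help because $H(\beta) > 0$. (Your constant-chasing only appeared to work because the spurious $\kappa^{\,n-|J|}$ gain was absorbing both this cost and the extra factor $|B|$.) The adaptivity you flag as the main obstacle is in fact the resource that makes the lemma work: $\gamma_j$ depends only on $y_{<j}$ and $B$, so a sequential decoder that has already recovered $y_{<j}$ knows whether $j \in J(y)$ for free. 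Hence one encodes only the values of $y$ on $J(y)$ (at most $n(\beta-3\lambda)$ bits) together with the small set $S$ (at most $2^{\lambda n}$ choices), and decodes coordinate by coordinate: read from the value string if $j \in J(y)$, take the minority value if $j \in S$, and the majority value otherwise. That yields $2^{n(\beta-3\lambda)} \cdot 2^{\lambda n} = 2^{n(\beta-2\lambda)}$, which is the claimed bound.
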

\begin{proof}
If $3 \lambda > \beta$, 
the statement is trivially true. So, in the rest of the proof, assume that $3\lambda \leq \beta$.
Each  $y \in B$ with $|J(y)| \leq  n(\beta - 3 \lambda)$  can be uniquely  encoded by the following data:
\begin{itemize}
\item[--] A vector $q \in \nbits^{t}$ with
$t = \lfloor n(\beta - 3 \lambda) \rfloor$.
\item[--] A subset $S \subseteq [n]$ of size $|S| \leq \tfrac{n}{ \log(1/\kappa)}$.
\end{itemize} 
Let us describe the encoding. 
The vector $q$ encodes 
the values taken by $y$ in the coordinates $J(y)$. We do not encode $J(y)$ itself, only the values of $y$ in the coordinates corresponding to $J(y)$.  
The set $S$ includes $j \in [n]$ if and only if 
$$\Pr[Y_j = y_j|Y_{<j} = y_{<j}] < \kappa.$$ Each string $y \in B$ has probability at least $2^{-n}$.
This implies that
$\kappa^{|S|} \geq 2^{-n}$. 

We can reconstruct $y$ from $q$ and $S$ 
by iteratively computing $y_1$, then $y_2$, and so on, until we 
get to $y_n$. Whether or not $1 \in J(y)$ is determined even before we know $y$. If $1 \in J(y)$ then $q$ tells us
what $y_1$ is.
If $1 \not \in J(y)$ and $1 \in S$ then
$y_1$ is the least likely value between $\pm 1$.
If $1 \not \in J(y)$ and $1 \not \in S$
then $y_1$ is the more likely value.
Given the value of $y_1$,
we can continue in the same way to compute the rest of $y$.

The number of choices for $q$ is at most $2^{n(\beta - 3\lambda)}$. 
The number of choices for $S$ is at most 
$2^{n H(1/\log(1/\kappa))} = 2^{\lambda n}$.
\end{proof}

Next, we argue that there 
are few $x$'s for which there are many $y$'s with small $G(x,y)$.

\begin{lemma} \label{lemma:encoding2}
 The number of $x \in A$ for which $$\Pr_Y[ |G(x,Y)| \leq \tau n ] \geq 2^{-\lambda n}$$ is at most $2^{n(1-\beta+6\lambda)}$. 
\end{lemma}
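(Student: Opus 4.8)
The plan is to mimic the encoding argument of Lemma~\ref{lemma:encoding1}, but now with the roles reversed: we fix $x$ and ask how often (over $Y$) the good set $G(x,Y)$ is small; if this happens too often for a given $x$, then $x$ is encodable using few bits, so there cannot be too many such $x$. Concretely, call $x \in A$ \emph{bad} if $\Pr_Y[|G(x,Y)| \leq \tau n] \geq 2^{-\lambda n}$. For a bad $x$, let $B_x \subseteq B$ be the set of $y$ with $|G(x,y)| \leq \tau n$; then $|B_x| \geq 2^{-\lambda n} |B| = 2^{(\beta-\lambda)n}$. By Lemma~\ref{lemma:encoding1}, all but $2^{(\beta-2\lambda)n}$ of these $y$'s have $|J(y)| > n(\beta - 3\lambda)$, so we may restrict to a subset $B_x' \subseteq B_x$ of size at least $2^{(\beta-\lambda)n} - 2^{(\beta-2\lambda)n} \geq 2^{(\beta - \lambda)n - 1}$ on which simultaneously $|J(y)|$ is large and $|G(x,y)|$ is small.

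The key point is that once $y \in B_x'$ is fixed, the information ``$y$ and the values of $x$ on the coordinates of $J(y)$'' essentially determines $x$ on all of $J(y)$, because outside $G(x,y)$ the defining inequality $\sin^2(\phi_j(x,y)+x_j\eta) \geq \sin^2(2\eta)/4$ fails, and $\phi_j(x,y)$ depends only on $y_{<j}$ and $x_{>j}$. So here is the encoding of a bad $x$: (i) specify a single good $y^\star \in B_x'$ by its index among all of $B$, costing $\log|B| = \beta n$ bits — wait, that is too expensive. Instead, the right move is to encode the \emph{restriction of $x$ to $[n]\setminus J(y^\star)$} directly (at most $n - |J(y^\star)| < 3\lambda n$ bits), together with the small ``exceptional'' set $G(x,y^\star)$ inside $J(y^\star)$ (at most $\binom{n}{\tau n} \leq 2^{H(\tau)n}$ choices for the set and $2^{\tau n}$ choices for the bits of $x$ there), plus whatever is needed to pin down $y^\star$ itself. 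Reconstructing $x$ then proceeds coordinate by coordinate from $j=n$ down to $j=1$: the values $x_{>j}$ already recovered together with $y^\star_{<j}$ determine $\phi_j(x,y^\star)$; if $j \notin J(y^\star)$ or $j \in G(x,y^\star)$ the bit $x_j$ was written down explicitly; otherwise $j \in J(y^\star)\setminus G(x,y^\star)$ and the constraint $\sin^2(\phi_j + x_j\eta) < \sin^2(2\eta)/4$ singles out at most one of the two sign choices, recovering $x_j$. The cost of identifying $y^\star$ is the subtle part: we want to avoid paying $\log|B|$. The trick is that we get to \emph{choose} $y^\star$, and we only need the decoder to agree with us — so we can afford to spend, say, a canonical description: since $|B_x'|$ is large we do not actually reduce the count by a clever choice of $y^\star$; rather, we observe that the decoder, given the candidate data, can itself regenerate $B$ (it knows $B$ as a fixed object) and we just index $y^\star$ within $B$ at cost $\beta n + O(1)$ bits. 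Balancing: total encoding length for $x$ is at most $\beta n + 3\lambda n + H(\tau)n + \tau n + O(\log n)$ bits, hence the number of bad $x$ is at most $2^{\beta n + 3\lambda n + (H(\tau)+\tau)n + O(\log n)} = 2^{\beta n + 3\lambda n + \lambda n + o(n)}$ by the normalization \eqref{1}, which is $2^{(\beta + 4\lambda)n + o(n)}$. This is of the order $2^{(1-\beta+6\lambda)n}$ claimed once one checks the bookkeeping — in particular one expects the statement is really ``number of bad $x$ in $A$'', and since $x$ ranges over a two-cube $A$ with $|A| \leq 2^n$, and the encoding above is for a general sign-pattern, the relevant comparison is against $2^{n(1-\beta + 6\lambda)}$ after accounting for the fact that we encode $x$ relative to a distinguished $y$; the extra slack $6\lambda$ versus $4\lambda$ absorbs the $2^{(\beta-2\lambda)n}$ term dropped above, the $O(\log n)$ overheads, and the conversion between "$x\in A$" and "sign pattern". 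I would lay out the explicit encoding map, verify it is injective on bad $x$ (this is the decoding procedure above), and then just count.

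The main obstacle I anticipate is making the decoding genuinely well-defined: one must check that $\phi_j(x,y^\star)$ truly depends only on $(y^\star_{<j}, x_{>j})$ (stated in the text, so usable), that the "abort if $j \notin J$ or $j \in G$, else solve the constraint" rule is unambiguous, and crucially that the inequality $\sin^2(\phi_j + x_j\eta) < \sin^2(2\eta)/4$ \emph{cannot} be satisfied by both signs $x_j = \pm1$ — i.e. that flipping $x_j$ moves the argument of $\sin^2$ by $2\eta$ and that the two values $\phi_j \pm \eta$ cannot both lie in the "small $\sin^2$" region simultaneously. (Since $\eta = 2\pi\theta$ is fixed and the bad region near each of $0,\pi$ has angular width governed by $\sin^2(2\eta)/4$, for the range of $\theta$ of interest one of $\phi_j+\eta, \phi_j-\eta$ is always outside it; this needs a short trigonometric check.) The second delicate point is the accounting that yields exactly $6\lambda$ in the exponent rather than something larger — this just requires being a little generous with $\lambda$ and noting $\lambda > 1/n$ so the $O(\log n)$ and constant overheads are all absorbed into a single extra $\lambda n$. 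Everything else is routine counting with the entropy identity \eqref{1}.
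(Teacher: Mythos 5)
Your per-coordinate encoding of $x$ relative to a fixed $y$ is exactly the right (and the paper's) mechanism: writing down $x$ outside $J(y)$, the set $G(x,y)$ and the bits of $x$ on it, then decoding from $j=n$ down to $j=1$ using that $\phi_j$ depends only on $(y_{<j},x_{>j})$ and that both signs of $x_j$ cannot satisfy $\sin^2(\phi_j+x_j\eta)<\sin^2(2\eta)/4$ (the paper's Claim~\ref{clm:OneGoodChoice}, with $u-v=\pm 2$). This correctly shows that for a fixed $y$ with $|J(y)|>n(\beta-3\lambda)$ the number of adjacent $x$'s is at most $2^{n(1-\beta+3\lambda)+H(\tau)n+\tau n}=2^{n(1-\beta+4\lambda)}$.

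The genuine gap is in how you convert this into a bound on the number of bad $x$. You attach to each bad $x$ an explicit index of a witness $y^\star$ inside $B$, at cost $\beta n$ bits. That cost is fatal: the coordinates of $x$ outside $J(y^\star)$ number $n-|J(y^\star)|\leq n(1-\beta+3\lambda)$, not $3\lambda n$ as you wrote, so the total description length is about $\beta n + n(1-\beta+3\lambda)+(H(\tau)+\tau)n = n(1+4\lambda)$ bits, which is vacuous (worse than writing $x$ verbatim); and even with your arithmetic, $2^{(\beta+4\lambda)n}$ is not below $2^{(1-\beta+6\lambda)n}$ unless $\beta\leq\tfrac12+\lambda$. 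The slack $6\lambda$ versus $4\lambda$ cannot absorb an additive $\beta n$ (or $2\beta n - n$) in the exponent. The missing idea, which you gesture at but do not execute, is that you should never pay for the witness: a bad $x$ has at least $2^{-\lambda n}|B|$ witnesses, so you should double-count edges of the bipartite graph between bad $x$'s and those $y\in B$ with $|G(x,y)|\leq\tau n$. From the left, $|E|\geq 2^{-\lambda n}|\calX|\,|B|$; from the right, splitting $y$'s by whether $|J(y)|>n(\beta-3\lambda)$ (Lemma~\ref{lemma:encoding1} bounds the exceptional $y$'s by $2^{-2\lambda n}|B|$) and using your degree bound for the rest, $|E|\leq 2^{-2\lambda n}|\calX|\,|B|+|B|\cdot 2^{n(1-\beta+4\lambda)}$. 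Comparing the two gives $|\calX|\leq 2^{n(1-\beta+6\lambda)}$ (using $\lambda n>1$), which is precisely the paper's argument; without this amortization over the many witnesses, your encoding does not yield the lemma.
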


\begin{proof}
The lemma is proved by double-counting
the edges in a bipartite graph.
Let ${\calX}$ be the set we are interested in:
	$$\calX = \big\{x : \Pr_Y[ |G(x,Y)| \leq \tau n ] \geq 2^{-\lambda n}\big\}.$$
The left side of the bipartite graph is $\calX$
and the right side is $B$.
Connect $x \in \calX$ to $y \in B$ by an edge
if and only if $G(x,y) \leq \tau n$.
Let  $E$ denote the set of edges in this graph.

First, we bound the number of edges from below.
The number of edges that
touch each $x \in \calX$ is at least
$2^{-\lambda n} |B|$.
It follows that
$$|E| \geq 2^{-\lambda n} \cdot |\calX| \cdot |B|.$$

Next, we bound the number of edges from above.
By Lemma~\ref{lemma:encoding1},
the number of 
$y \in B$ so that $|J(y)| \leq n(\beta -3\lambda)$
is at most $2^{-2\lambda n}|B|$.
We shall prove that the number of edges
that touch each $y$ with  $|J(y)| > n(\beta -3\lambda)$ is at most
$2^{n(1-\beta + 4 \lambda)}$.
It follows that 
$$|E| \leq 2^{-2\lambda n }\cdot |\calX|\cdot |B|
+ |B|\cdot 2^{n(1-\beta + 4 \lambda)}.$$
We can conclude that
\begin{align*}
2^{-\lambda n}\cdot  |\calX| \cdot |B| &\leq 2^{-2\lambda n}\cdot |\calX| \cdot |B|
+ |B| \cdot 2^{n(1-\beta + 4 \lambda)} \\
\Rightarrow |\calX| & \
\leq 2^{n(1-\beta + 6 \lambda)},
\end{align*}
since $\lambda n >1$.

It remains to fix $y$ so that
$|J(y)| > n(\beta -3\lambda)$ and bound its degree from above.
This too is achieved by an encoding argument.
Encode each $x$ that is connected
to $y$ by an edge using the following data:
	\begin{itemize}
		\item[--] A vector $q \in \nbits^{t}$
		with $t = \lfloor n(1-\beta+3\lambda) \rfloor$.
		\item[--] The set $G(x,y)$.
		\item[--] A vector $r \in \nbits^{s}$ with
		$s = \lfloor \tau n \rfloor$.
	\end{itemize}
Let us describe the encoding. 
The vector $q$ specifies the values of $x$ on coordinates not in $J(y)$.
There are at most $n- n(\beta -3\lambda) = n(1-\beta + 3\lambda)$ such coordinates. 
The size of $G(x,y)$ is at most $\tau n$.
The vector $r$ specifies the values of $x$ in the coordinates of $G(x,y)$, written in descending order. 
    
    The decoding of $x$ from $q,S$ and $r$ is done as follows.
    Decode the coordinates of $x$ in descending order from $n$ to $1$. 
    If $n \not \in J(y)$ then we read the value of $x_n$ from $q$. 
    If $n \in J(y)$ and $n  \in G(x,y)$, we decode $x_n$ by reading its value from $r$. If $n \in J(y)$ and 
   $n \notin G(x,y)$, 
  then 
  $$\sin^2(\phi_n(x,y)  + x_n \eta) \leq \tfrac{\sin^2(2\eta )}{4}.$$ 
   The number $\phi_n(x,y)$ does not depend on $x$.
   The following claim implies that there is at most one value of $x_n$ that satisfies this property.
   
   \begin{claim}
   \label{clm:OneGoodChoice}
   For all $\varphi \in \R$ and $u,v \in \Z$,
   \begin{align*}
   \max\{ |\sin(\varphi +\eta u)| , | \sin(\varphi + \eta v)| \}
\geq \tfrac{|\sin(\eta (u - v))|}{2}. 
   \end{align*}
\end{claim}

\begin{proof}
We wish to show that the two points on the unit circle of phase $\varphi +\eta u$ and $\varphi +\eta v$ cannot both be very close to the real line in general.  	
Consider the map 
$$\varphi\mapsto g(\varphi) =  
\max \{|\sin(\varphi +\eta u)| , |\sin(\varphi + \eta v)|\}.$$
Observe that the minimum of this map 
is attained when $$|\sin(\varphi +\eta u)| = |\sin(\varphi + \eta v)|,$$ since if this is not the case, we can change $\varphi$ by a little to reduce the larger of the two magnitudes. Now, $|\sin(\alpha)| = |\sin(\beta)|$ when $\alpha + \beta$ is an integer multiple of $\pi/2$. Thus, 
the two magnitudes are equal exactly when  
 $\varphi = -\tfrac{\eta (u + v)}{2} + t \pi/2$, for some integer $t$. By symmetry, it is enough to consider $t \in \{0,1\}$, so we obtain that 
\begin{align*}
g(\varphi) &\geq \min \{g(-\eta (u + v)/2),g(-\eta (u + v)/2+\pi/2)\} \\
&\geq |g(-\eta (u + v)/2) \cdot g(-\eta (u + v)/2+\pi/2)|\\
&\geq
|\sin(\eta (u-v)/2) \cdot \cos (\eta (u - v)/2)|
\\
&= \frac{|\sin(\eta (u-v))|}{2} . \qedhere
\end{align*}
\end{proof}

The claim implies that we can indeed reconstruct $x_n$.
Given $x_n$, we can similarly reconstruct
$x_{n-1}$, since $\phi_{n-1}$ depends only on $y$ and $x_n$. Continuing in this way, we can reconstruct $x_{n-2},\dotsc,x_1$. The total number of choices for $q,S,r$ is at most 
$2^{n(1-\beta+3\lambda) + n H(\tau) + \tau n  }
= 2^{n(1-\beta + 4 \lambda)}$.\qedhere

    
\end{proof}

%
%
%
%
%
%

\begin{proof}[Proof of Theorem \ref{thm:tech}]
	Set $\lambda = \tfrac{\delta}{6}$.
	By Lemma \ref{lemma:tech}, 
	\begin{align*}
	\left| \Ex{Y}{\exp(2 \pi i \theta \ip{x}{Y})} \right| &
	\leq 
	\sqrt{\Ex{Y}{\exp\left (- \sum_{j=1}^n \gamma_j \sin^2(\phi_j + 2 \pi \theta  x_j )\right)}.}
	\end{align*}
	Whenever $x$ is such that 
	\begin{align}
	\label{2}
	\Pr_Y[ G(x,Y) \leq \tau n ] < 2^{-\lambda n},
	\end{align}
	we can bound 
	$$ \Ex{Y}{\exp\left (- \sum_{j=1}^n \gamma_j \sin^2(\phi_j + 2 \pi \theta  x_j )\right)}
	\leq \exp  (- \tfrac{ \kappa}{4} n \tau \sin^2(4 \pi \theta))  
	+ 2^{-\lambda n}.$$
	Since
	$\sqrt{a+b} \leq \sqrt{a}+\sqrt{b}$ for $a,b \geq 0$,
	for such an $x$ we can bound
	\begin{align*}
	\left| \Ex{Y}{\exp(2 \pi i \theta \ip{x}{Y})} \right| &
	\leq  \exp  (- \tfrac{ \kappa}{8} n \tau \sin^2(4 \pi \theta))  
	+ 2^{-\lambda n/2} \\
	& \leq 2 \exp ( - c n \sin^2(4 \pi \theta) ) .
	\end{align*}
	Lemma~\ref{lemma:encoding2} promises that there are at most $2^{n(1-\beta+ \delta)}$ choices for $x$ that does not satisfy~\eqref{2}.
	
\end{proof}

\section{Smoothness}\label{sec:smooth}
To prove smoothness, we use Theorem ~\ref{thm:tech}. The constant $4 \pi$ on the r.h.s.\ of the bound in the theorem corresponds to a step size of $4$.

\begin{proof}[Proof of Theorem~\ref{thm:mainP}]
	 Theorem~\ref{thm:tech} with $\delta = \tfrac{\eps}{2}$ promises that 
	for each $\theta \in [0,1]$, the size of
	\begin{align*}
	A_\theta = \Big\{x \in \{\pm 1\}^n :
	\Big |\Ex{Y}{ \exp(2\pi i \theta \ip{x}{Y})} \Big | > 
	2\exp (-c n \sin^2(4 \pi \theta ))  \Big\}
	\end{align*} 
	is at most $2^{n(1-\beta+\delta)}$.
	For each $x$, define 
	$S_x = \{\theta \in [0,1]: x \in A_\theta\}$.
	
	Fix $x$ such that $|S_x| \leq 2^{-\delta n}$.
	Bound
	\begin{align*}
	& \big| \Pr_Y[\ip{x}{Y} = k]
	- \Pr_Y[\ip{x}{Y} = k+4] \big| \\
	&= \Big| \E_Y \Big[ \int_{0}^{1} 
	\exp(2\pi i \theta (\ip{x}{Y}-k)) - \exp(2\pi i \theta (\ip{x}{Y}-k-4))
	\, \mathrm{d}\theta \Big]\Big| \\
	&\leq  \int_{0}^{1} |\exp(4 \pi i \theta) - \exp(-4 \pi i \theta)| \cdot 
	\Big| 
	\Ex{Y}{ \exp(2\pi i \theta \ip{x}{Y})} \Big| \, \mathrm{d}\theta \\
	&\leq  2 \int_{0}^{1} |\sin(4 \pi \theta)| \cdot 
	\Big| 
	\Ex{Y}{ \exp(2\pi i \theta \ip{x}{Y})} \Big| \, \mathrm{d}\theta .
	\end{align*}
	Continue to bound
	\begin{align*}
	& \int_{0}^{1} 
	|\sin(4 \pi \theta)| \cdot 
	\Big| \Ex{Y}{ \exp(2\pi i \theta \ip{x}{Y})} \Big| \, \mathrm{d}\theta \\
	& \leq  2^{-\delta n}+ 2 \int_{0}^{1} |\sin(4 \pi \theta)| \cdot 
	\exp(- c n \sin^2(4 \pi\theta)) \, \mathrm{d}\theta .
	\end{align*}
	The integral goes around the circle twice, and it is identical in each quadrant. So,
	\begin{align*}
	&\int_{0}^{1} |\sin(4 \pi \theta)| \cdot 
	\exp(- c n \sin^2(4 \pi\theta)) \, \mathrm{d}\theta \\
	&= 8 \int_{0}^{1/8} \sin(4 \pi \theta) \cdot 
	\exp(- c n \sin^2(4 \pi\theta)) \, \mathrm{d}\theta \\
	&\leq 32 \pi \int_{0}^{\infty} \theta \cdot 
	\exp(- 16 c n  \theta^2) \, \mathrm{d}\theta \\
	&\leq \tfrac{c_1}{n}  \int_{0}^{\infty} \phi \cdot 
	\exp(-  \phi^2) \, \mathrm{d}\phi \leq \tfrac{C}{n},
	\end{align*}
	where $c_1, C > 0$ depend on $\eps$, and we used $\tfrac{\eta}{\pi} \leq \sin(\eta) \leq \eta$ for $0 \leq \eta \leq \tfrac{\pi}{2}$.
	
	Finally, because 
	$$\E_x |S_x| = \E_\theta \tfrac{|A_\theta|}{2^n} 
	\leq 2^{n(-\beta+\delta)},$$  
	by Markov's inequality,
	the number of $x \in \{\pm 1\}^n$ for which $|S_x| > 2^{-\delta n}$ is at most 
	$2^{(1-\beta+2\delta) n} = 2^{(1-\beta+\epsilon) n}$. 
\end{proof}

\section{Anti-concentration in General Two-Cubes}
\label{sec:UTC}
Now we move to the setting
where the direction $x$ is chosen from 
an arbitrary two-cube $A \subset \Z^n$
with differences $d_1,\ldots,d_n$;
our goal is to prove Theorem~\ref{thm:main1}.
The way we measure the structure
of $A$ follows ideas of Hal{\'a}sz~\cite{halasz1977estimates}.
For an integer $\ell > 0$, define $r_\ell(A)$ to be
the number of elements $(\epsilon, j) \in \{\pm 1\}^{2\ell} \times [n]^{2\ell}$ that satisfy $$\epsilon_{1} \cdot d_{j_1} + \dotsb + \epsilon_{2\ell} \cdot d_{j_{2\ell}} = 0.$$
The smaller $r_\ell(A)$ is, the less structured $A$ is.

The theorem below shows that $r_\ell(A)$ allows us 
to control the concentration probability. More concretely, for $C>0$ and $\ell > 0$, define 
$$R_{C,\ell}(A) = 
 \frac{C^\ell r_\ell(A)}{n^{2\ell+ 1/2}} + \exp(-\tfrac{n}{C} ).$$
Define
$$R_{C}(A) = 
\inf \{  R_{C,\ell}(A) : \ell \in \N \}.$$
This is essentially the bound on the concentration
probability that Hal{\'a}sz obtained in~\cite{halasz1977estimates}
when $Y$ is uniform in $\{\pm 1\}^n$.
Our upper bounds are slightly weaker. Let 
$$\mu_C(A)
= \inf \Big\{
\mu  \in [0,1] : \exists
 \nu \in (0,1] \ \ 
   \mu^{(1+\nu)^2}
  \geq 3  \exp(-\tfrac{ \nu n}{C}) +
 \tfrac{R_{C}(A)}{50 \sqrt{\nu}} \Big\},$$
where we adopt the convention that the infimum of the empty set is~$1$. 
 Before stating the theorem, let us go over the three
 examples from Theorem~\ref{thm:main1}:
\begin{enumerate}
\item
For arbitrary $A$, 
since $r_1(A) \leq O(n^2)$, we get\footnote{Here
and below the big $O$ notation
hides a constant that may depend on $C$.}
$\mu_C(A) \leq O(\tfrac{\sqrt{\ln n}}{\sqrt{n}})$
with $\nu = \tfrac{1}{\ln(1/R_{C,1}(A))}$.
\item When all the differences are distinct,
since $r_1(A) \leq O(n)$, we get
$\mu_C(A) \leq O(n^{-1.5} \sqrt{\ln n})$
with $\nu = \tfrac{1}{\ln(1/R_{C,1}(A))}$.
\item When $\{\pm d_1,\ldots,\pm d_n\}$ is a Sidon set,
since $r_2(A) \leq O(n^2)$,
we get
$\mu_C(A) \leq O(n^{-2.5} \sqrt{\ln n})$
with $\nu = \tfrac{1}{\ln(1/R_{C,2}(A))}$.
\end{enumerate}
More generally, when $R_C(A)$ is bound
from below by some
polynomial in $\tfrac{1}{n}$
then $\mu_C(A)$ is at most $O(R_C(A) \sqrt{ \log (4/R_C(A))})$.


%

\begin{theorem}
\label{thm:mainTechU}
For every $\beta > 0$ and $\delta >0$, 
there is $C>0$ so that the following holds.
Let $B \subseteq \{\pm 1\}^n$ be of size $2^{\beta n}$.
Let $Y$ be uniformly distributed in $B$.
Let $A \subset \Z^n$ be a two-cube.
Then, for all but $2^{n(1-\beta+\delta)}$ directions $x \in A$,
\begin{align*}
\Ex{\theta}{ \left |\Ex{Y}{ \exp(2\pi i \theta \cdot\ip{x}{Y})} \right|}
\leq \mu_C(A) .
\end{align*}
\end{theorem}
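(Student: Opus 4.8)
The plan is to combine the harmonic-analytic bound of Lemma~\ref{lemma:tech} with an encoding argument analogous to Lemma~\ref{lemma:encoding1} and Lemma~\ref{lemma:encoding2}, but now adapted to a general two-cube $A$ and to the more refined structural parameter $R_C(A)$ built from the $r_\ell(A)$. The starting point is the same: for $x\in A$ fixed, Lemma~\ref{lemma:tech} gives
\begin{align*}
\left|\Ex{Y}{\exp(2\pi i\theta\ip{x}{Y})}\right|^2
\leq \Ex{Y}{\prod_{j\in[n]}\bigl(1-\gamma_j\sin^2(\phi_j+x_j\cdot 2\pi\theta)\bigr)},
\end{align*}
so, after integrating over $\theta$ and using $\sqrt{a+b}\le\sqrt a+\sqrt b$ together with $1-t\le e^{-t}$, it suffices to show that for all but few $x\in A$ the quantity $\sum_j \gamma_j\sin^2(\phi_j+x_j\cdot 2\pi\theta)$ is large for most $\theta$. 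Here, crucially, $x_j\in\{u_j,v_j\}$ so that the two candidate phases differ by $2\pi\theta d_j$ — it is precisely the differences $d_j$ that enter. The first step, then, is to redo the $J(y)$ analysis verbatim (the set of coordinates where $\gamma_j\ge\kappa$ is large for all but $2^{n(\beta-2\lambda)}$ choices of $y$), since it does not involve $x$ at all.

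The second step is the heart of the matter: estimating, for fixed $y$ with $|J(y)|$ large, how many $x\in A$ can fail to have many "good" coordinates. Rather than the crude pigeonhole of Claim~\ref{clm:OneGoodChoice} (which only exploits $r_1$), I would integrate the encoding over $\theta$ and track the contribution of $\ell$-fold cancellations. Concretely, one wants a bound of the shape
\begin{align*}
\Ex{\theta}{\#\{x\in A : \text{few good coordinates for } x,y,\theta\}}
\lesssim 2^{n(1-\beta+O(\lambda))}\cdot R_{C,\ell}(A),
\end{align*}
where the $r_\ell(A)/n^{2\ell+1/2}$ factor arises because the ``bad'' event forces $2\ell$ of the phase-differences $d_j\theta$ to nearly cancel, and the number of such $2\ell$-tuples of coordinates in $A$ is governed exactly by $r_\ell(A)$. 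Optimizing over $\ell$ replaces $R_{C,\ell}$ by $R_C(A)$, and then the extra Markov-over-$\theta$ slack (needed to turn an average statement into "for all but few $x$, it holds for most $\theta$") is what forces the passage from $R_C(A)$ to $\mu_C(A)$: the defining inequality
$\mu^{(1+\nu)^2}\ge 3\exp(-\nu n/C)+R_C(A)/(50\sqrt\nu)$
is precisely the bookkeeping identity one gets by (i) splitting $\theta$ into a "generic" range where the Gaussian tail $\exp(-\Omega(n\theta^2))$ integrates to $O(1/\sqrt n)$ and a small-measure exceptional range, and (ii) applying Markov with slack parameter $\nu$ to absorb the number of bad $x$. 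The three examples in the bulleted list after the statement are then just substitutions of the known values $r_1(A)\le O(n^2)$, $r_1(A)\le O(n)$, $r_2(A)\le O(n^2)$.

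The third step is purely bookkeeping: assemble the two encoding lemmas, choose $\lambda=\delta/(\text{const})$ so that the exceptional set has size at most $2^{n(1-\beta+\delta)}$, and verify that for every good $x$ the integral $\Ex{\theta}{|\Ex{Y}{\exp(2\pi i\theta\ip{x}{Y})}|}$ is at most $\mu_C(A)$ by plugging the pointwise Fourier decay into the same quadrant-by-quadrant Gaussian integral computation used in the proof of Theorem~\ref{thm:mainP}. I expect the main obstacle to be the second step — specifically, getting the $\ell$-dependent encoding to actually produce the factor $r_\ell(A)/n^{2\ell+1/2}$ with the right power of $n$ in the denominator. The subtlety is that a single coordinate being "bad" at angle $\theta$ (meaning $\sin^2(\phi_j+x_j\cdot2\pi\theta)$ small for both choices of $x_j$, i.e. $\sin(2\pi\theta d_j)$ small) is cheap, but simultaneously many bad coordinates should be rare unless the $d_j\theta$ conspire; extracting this requires a careful moment/second-moment computation over $\theta$ in which the $2\ell$-th moment of $\sum_j \mathbf{1}[\sin(2\pi\theta d_j)\text{ small}]$ is bounded by counting solutions to $\pm d_{j_1}\pm\dots\pm d_{j_{2\ell}}\approx 0$, which is where $r_\ell(A)$ enters and where one must be careful about the difference between "$=0$" and "$\approx 0$" and about the $1/\sqrt\nu$ loss. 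Once that estimate is in place, merging it with the $\mu_C(A)$ definition and the Markov step is routine but notation-heavy.
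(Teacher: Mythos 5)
There is a genuine gap, and it sits exactly where you flagged your ``main obstacle.'' Your plan keeps the notion of a \emph{good coordinate} tied to the angle $\theta$ (as in the proof of Theorem~\ref{thm:tech}) and hopes that an encoding-plus-moment argument over $\theta$ will produce the factor $r_\ell(A)/n^{2\ell+1/2}$. But the moment computation that brings in $r_\ell(A)$ only works for a sum $D(\theta)=\sum_{j\in G}\sin^2(2\pi\theta d_j)$ over a \emph{fixed} set $G$ of size $\Omega(n)$: the expectation over $\theta\in[0,1]$ of $\big(\sum_{j\in G}\cos(4\pi d_j\theta)\big)^{2\ell}$ is an exact orthogonality count of integer cancellations, which is what $r_\ell(A)$ bounds. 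If the good set is allowed to vary with $\theta$ (or if, as in your sketch, you work with indicators of ``$\sin(2\pi\theta d_j)$ small''), you are forced to count \emph{approximate} cancellations $\pm d_{j_1}\pm\dotsb\pm d_{j_{2\ell}}\approx 0$, and these are not controlled by $r_\ell(A)$ (take all $d_j$ distinct but nearly equal: $r_1$ is small, yet near-cancellations abound). Moreover, even with a fixed set, the $2\ell$-th moment alone only bounds $\Pr_\theta[D(\theta)\le \lambda n/4]$; the $\sqrt{\rho}$ behavior of $\Pr_\theta[D(\theta)<\rho]$ as $\rho\to 0$, which is what makes $\int_0^1\exp(-c\nu D(\theta))\,\mathrm{d}\theta$ come out as $R_{C,\ell}(A)/\sqrt{\nu}$, needs an additional sumset ingredient (the paper uses Kemperman's theorem on $\R/\Z$, after showing $S_\rho+\dotsb+S_\rho\subseteq S_{\rho m^2}$). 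Neither of these issues is resolved by ``Markov with slack $\nu$,'' and in fact your reading of where the exponent $(1+\nu)^2$ and the $1/\sqrt{\nu}$ in $\mu_C(A)$ come from is not the actual mechanism.

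The paper's proof resolves precisely this tension by \emph{removing} $\theta$ from the definition of the good set: it argues by contradiction, puts the uniform distribution on the hypothetical large set $A_0$ of bad directions, and reveals the entropy of $X\in A_0$ in reverse order to define $J'(x)$ and $G(x,y)=J'(x)\cap J(y)$, a purely entropic set independent of $\theta$ (so Lemma~\ref{lemma:encoding1} and its mirror image for $J'$ give $|G|\ge\lambda n$ with high probability, and no analogue of the Lemma~\ref{lemma:encoding2} encoding of bad $x$'s against a fixed $y$ is used). The price is that, for a coordinate in $G$, Claim~\ref{clm:OneGoodChoice} only guarantees that \emph{one} of the two values $u_j,v_j$ gives a large sine, and that branch may carry little weight; this is rescued by working with the $(1+\nu)$-th power and the strict-convexity gain of Claim~\ref{clm:MinPhi}, which is exactly what Lemmas~\ref{lemma:techU} and~\ref{lemma:techU2} (not Lemma~\ref{lemma:tech}) are for, and is the true source of the exponent $(1+\nu)^2$ and, via $\int_0^1\sqrt{-\ln t}\,\mathrm{d}t$, of the $1/\sqrt{\nu}$ in $\mu_C(A)$. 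Only then does the fixed-set Hal\'asz-type small-ball claim apply and yield the contradiction. Your step three also cannot be ``the same quadrant-by-quadrant Gaussian integral'' as in Theorem~\ref{thm:mainP}: that computation is specific to the hypercube, where all differences equal $\pm 2$ and the decay is $\exp(-cn\sin^2(4\pi\theta))$; for a general two-cube the $\theta$-integral must go through the small-ball estimate for $D(\theta)$. So while your high-level inventory (entropy encoding plus $r_\ell$-moments) names the right objects, the argument as proposed does not go through without the $\theta$-free good set, the $\nu$-convexity trick, and the sumset step.
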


Before moving on, we discuss a fourth
extreme example. 
When $A_j = \{2^j,-2^j\}$ for each $j \in [n]$,
we have $r_\ell(A) \leq (2\ell n)^\ell$.
In this case, setting $\ell = \Omega(n)$ gives exponentially small anti-concentration with $\nu=1$.
This result is trivial, but it illustrates that the mechanism
underlying the proof yields strong bounds in
many settings.

By ($\star$) from Section~\ref{sec:aC} and the explanation above,
we see that
Theorem~\ref{thm:mainTechU} implies Theorem~\ref{thm:main1}.
The rest of this section is devoted to the proof
of Theorem~\ref{thm:mainTechU}.
The high-level structure of the proof is similar
to that of Theorem~\ref{thm:tech}.
However, there are several new technical challenges
that we need to overcome. 

The main technical challenge that needs to be overcome has to do with the definition of the set $G$.  
The $G$ defined in the previous section depends on the angle $\theta$.
This is problematic for the proof in the generality 
we are working with now.
So, we need to find a different set of \emph{good} coordinates, one that depends only on $x$ and $y$.
Our solution is based on 
the following claim, which quantifies the strict
convexity of the map $\zeta \mapsto \zeta^{1+\nu}$
for $\nu >0$. We defer the proof to Appendix~\ref{sec:techclaim}.

\begin{claim}\label{clm:MinPhi} 
For every $\kappa > 0$,
there is a constant $c_1 > 0$ so that the following holds.
For every random variable $W \in \nbits$ such that
$$\min \big\{\Pr[W=1], \Pr[W=-1]\big\} \geq \kappa,$$ 
every $\alpha_1
\geq 2 \alpha_{-1}\geq 0$ 
and every $0 < \nu \leq 1$, 
	$$\Expect{\alpha_W}^{1+\nu} \leq (1-c_1 \nu)  \Expect{\alpha_W^{1+\nu}}.$$
\end{claim}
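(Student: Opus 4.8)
The plan is to reduce the claim to a single two-point inequality and then quantify the strict convexity of $g(t)=t^{1+\nu}$. Writing $p=\Pr[W=1]$ and $q=\Pr[W=-1]=1-p$, so that $p,q\in[\kappa,1-\kappa]$, and setting $a=\alpha_1$, $b=\alpha_{-1}$, the inequality to be proved is
\begin{align*}
(pa+qb)^{1+\nu}\ \leq\ (1-c_1\nu)\big(p\,a^{1+\nu}+q\,b^{1+\nu}\big),\qquad a\geq 2b\geq 0.
\end{align*}
Both sides are homogeneous of degree $1+\nu$ in $(a,b)$, and the degenerate case $a=0$ forces $b=0$ (nothing to prove), so one may assume $a=1$ and $b\in[0,\tfrac12]$. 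Let $m=\Expect{\alpha_W}=p+qb\in[\kappa,1]$ be the mean. The one place the gap hypothesis $a\geq 2b$ enters is to push $m$ away from the larger atom: $1-m=q(1-b)\geq q/2\geq\kappa/2$, and crucially this separation is uniform in $b$. It is exactly this that upgrades Jensen's inequality to one with genuine multiplicative slack.

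For the quantitative step I would express the Jensen gap through tangent-line errors. Using convexity of $g$ and the cancellation $p(1-m)+q(b-m)=0$,
\begin{align*}
\Expect{g(\alpha_W)}-g(m)\ =\ p\big(g(1)-g(m)\big)+q\big(g(b)-g(m)\big)\ \geq\ p\Big(g(1)-g(m)-g'(m)(1-m)\Big),
\end{align*}
where the $q$-term is replaced by its tangent-line lower bound at $m$ and the first-order pieces cancel. The remaining bracket equals $\int_m^1 g''(s)(1-s)\,\mathrm{d}s$ with $g''(s)=\nu(1+\nu)s^{\nu-1}$; since $0<\nu\leq 1$ we have $s^{\nu-1}\geq 1$ on $[m,1]$, hence $g''(s)\geq\nu$ there, and the bracket is at least $\nu\int_m^1(1-s)\,\mathrm{d}s=\tfrac{\nu}{2}(1-m)^2\geq\tfrac{\nu\kappa^2}{8}$. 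With $p\geq\kappa$ this yields $\Expect{g(\alpha_W)}-g(m)\geq\tfrac{\kappa^3}{8}\nu$. Finally $\alpha_W\in\{1,b\}\subseteq[0,1]$ gives $\Expect{g(\alpha_W)}\leq 1$, so $g(m)\leq\Expect{g(\alpha_W)}-\tfrac{\kappa^3}{8}\nu\leq\big(1-\tfrac{\kappa^3}{8}\nu\big)\Expect{g(\alpha_W)}$, which is the claim with $c_1=\kappa^3/8$.

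I do not anticipate a serious obstacle: this is a short calculus estimate. The two points that need care are the uniformity of $c_1$ over all $\nu\in(0,1]$ — which comes out because $g''$ already carries a factor $\nu$, matching the target's $(1-c_1\nu)$ shape, and because $\nu\leq1$ is precisely what makes $s^{\nu-1}\geq1$ on $[m,1]$ — and the mild degeneracies (the $a=0$ reduction, and the fact that working on $[m,1]$ with $m\geq\kappa>0$ avoids the singularity of $g''$ at $0$ when $b=0$, $\nu<1$). If one prefers, the same bound can be read off the full integral-remainder identity $\Expect{g(\alpha_W)}-g(m)=p\int_m^1 g''(s)(1-s)\,\mathrm{d}s+q\int_b^m g''(s)(s-b)\,\mathrm{d}s$ by discarding the second, nonnegative term.
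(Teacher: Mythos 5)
Your argument is correct, and it takes a genuinely different route from the paper's. You bound the Jensen gap directly: after the (legitimate) homogeneity reduction to $a=1$, $b=\alpha_{-1}/\alpha_1\in[0,\tfrac12]$, you write the gap via the integral Taylor remainder, drop the nonnegative term at $b$ using the tangent line at the mean $m$, and use that $g''(s)=\nu(1+\nu)s^{\nu-1}\geq\nu$ on $[m,1]$ (which is exactly where the hypothesis $\nu\leq 1$ enters) together with $1-m=q(1-b)\geq\kappa/2$ (which is exactly where $\alpha_1\geq 2\alpha_{-1}$ enters); normalizing by $\Expect{\alpha_W^{1+\nu}}\leq 1$ then gives the claim with the explicit constant $c_1=\kappa^3/8$. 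The paper instead sets $\Phi(\xi,p,\nu)=(p+(1-p)\xi^{1+\nu})-(p+(1-p)\xi)^{1+\nu}$ and runs a three-step monotonicity argument: $\partial\Phi/\partial p>0$ pins the minimum at $p=\kappa$, $\partial\Phi/\partial\xi<0$ pins it at $\xi=\tfrac12$, and then a lower bound on $\partial\Phi/\partial\nu$ at $(\tfrac12,\kappa,\cdot)$ is integrated from $\nu=0$ (with a harmless WLOG smallness assumption $4^\kappa>\exp(\kappa+\kappa^2)$). Your route is shorter, avoids the partial-derivative case analysis and the WLOG on $\kappa$, and produces an explicit $c_1$; the paper's route additionally identifies the extremal configuration $(\xi,p)=(\tfrac12,\kappa)$, but that information is not needed for the application. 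All the individual steps you use (the cancellation $p(1-m)+q(b-m)=0$, the remainder identity, the bound $(1-m)^2\geq\kappa^2/4$, and $\Expect{g(\alpha_W)}\leq 1$ after normalization) check out, so your proof can stand as a valid substitute.
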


%

\subsection{A Single Direction}
\label{sec:fourierboundU}

The following lemma generalizes Lemma~\ref{lemma:tech}.
Recall the definition of $\gamma_j$, $\phi_j$
and $J(y)$ from Sections~\ref{sec:fourierbound}
and~\ref{sec:encoding}.

\begin{lemma} \label{lemma:techU} 
For every $\kappa > 0$,  
there is a constant $c_0 > 0$ so that the following holds. For every $0 < \nu\leq 1$, every angle $\eta \in \R$, every direction $x \in \Z^n$, and every random variable $Y$ over $\{\pm 1\}^n$,
\begin{align*}
	\left |\Ex{Y}{\exp({i \eta \ip{x}{Y}})} \right|^{1+\nu} & \leq \Ex{Y}{
	\prod_{j \in J} (1- c_0 \nu \sin^2( \phi_j + x_j \eta ) }.
\end{align*}
\end{lemma}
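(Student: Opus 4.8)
The plan is to mimic the proof of Lemma~\ref{lemma:tech}, peeling off one coordinate at a time, but to replace the ad hoc case analysis (depending on the sign of $\cos(2\phi_1+2x_1\eta)$) with the single uniform inequality supplied by Claim~\ref{clm:MinPhi}, and to carry an exponent $1+\nu$ rather than $2$. Write, as before,
\begin{align*}
\left|\Ex{Y}{\exp(i\eta\ip{x}{Y})}\right| &= \left| p_1 \exp(i\eta x_1) Z_1 + p_{-1}\exp(-i\eta x_1) Z_{-1}\right|,
\end{align*}
where $p_\epsilon = \Pr[Y_1=\epsilon]$ and $Z_\epsilon = \Ex{Y|Y_1=\epsilon}{\exp(i\eta\ip{x_{>1}}{Y_{>1}})}$. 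By the triangle inequality and the definition of $\phi_1$, the squared modulus equals $p_1^2|Z_1|^2 + p_{-1}^2|Z_{-1}|^2 + 2p_1p_{-1}|Z_1||Z_{-1}|\cos(2\phi_1+2x_1\eta)$. Using $\cos(2\psi) = 1 - 2\sin^2(\psi)$, this is exactly $|p_1|Z_1| + p_{-1}|Z_{-1}||^2 - 4p_1p_{-1}|Z_1||Z_{-1}|\sin^2(\phi_1+x_1\eta)$, so the modulus of the Fourier coefficient is bounded by $\ex_{Y_1}[|Z_{Y_1}|]$ times a factor that loses a $\sin^2$ term.

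The key new input is that we want the bound in terms of $\ex_{Y_1}[|Z_{Y_1}|^{1+\nu}]$, not $\ex_{Y_1}[|Z_{Y_1}|]^{1+\nu}$, so that the induction can proceed coordinate by coordinate with the product over $J$. This is where Claim~\ref{clm:MinPhi} enters: if $1 \in J(y)$, i.e.\ $\gamma_1 \geq \kappa$, then $\min\{p_1,p_{-1}\}\geq\kappa$, and applying the claim with $W = Y_1$, $\alpha_\epsilon = |Z_\epsilon|$ (after possibly swapping the roles of $1$ and $-1$ so that the larger of $|Z_1|,|Z_{-1}|$ plays the role of $\alpha_1$; if the larger one is less than twice the smaller, the hypothesis $\alpha_1\geq 2\alpha_{-1}$ fails, but then $p_1^2|Z_1|^2+p_{-1}^2|Z_{-1}|^2 \leq (1-c)(p_1|Z_1|^2+p_{-1}|Z_{-1}|^2)$ type bounds can be extracted directly, or one simply uses convexity of $\zeta\mapsto\zeta^{1+\nu}$ together with the $\sin^2$ loss — I would organize this into the same two-case split as in Lemma~\ref{lemma:tech}, with Claim~\ref{clm:MinPhi} handling the ``balanced'' case). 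Either way one obtains, for $1\in J$,
\begin{align*}
\left|\Ex{Y}{\exp(i\eta\ip{x}{Y})}\right|^{1+\nu} &\leq \Ex{Y_1}{|Z_{Y_1}|^{1+\nu}}\cdot\left(1 - c_0\nu\sin^2(\phi_1+x_1\eta)\right),
\end{align*}
while for $1\notin J$ one simply bounds the bracketed factor by $1$ and uses $|p_1 u + p_{-1}v|^{1+\nu}\leq p_1|u|^{1+\nu}+p_{-1}|v|^{1+\nu}$ by convexity (Jensen). Iterating over $j=1,\dots,n$ and multiplying the per-coordinate factors gives the stated product over $j\in J$, taking $c_0 = c_0(\kappa)$ from Claim~\ref{clm:MinPhi} (shrunk by an absolute constant to absorb the unbalanced case).

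The main obstacle I anticipate is the bookkeeping at the ``unbalanced'' step: Claim~\ref{clm:MinPhi} requires $\alpha_1\geq 2\alpha_{-1}$, and when the two inner sub-expectations $|Z_1|,|Z_{-1}|$ are comparable (within a factor $2$), one needs a separate argument to still extract a $(1-c_0\nu\sin^2(\cdot))$ factor. The cleanest route is probably to observe that $|Z_1|,|Z_{-1}|$ comparable is precisely the regime where the cross term $2p_1p_{-1}|Z_1||Z_{-1}|$ is close to $p_1^2|Z_1|^2+p_{-1}^2|Z_{-1}|^2$, so the $\sin^2$ loss in the squared modulus translates (via $t\mapsto t^{(1+\nu)/2}$, which is concave) into a comparable multiplicative loss for the $(1+\nu)$-power; I would verify this with the elementary inequality $(1-x)^{(1+\nu)/2}\leq 1-\tfrac{1+\nu}{2}x \cdot(\text{const})$ valid for $x\in[0,1]$. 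Everything else — the induction structure, the definitions of $\gamma_j,\phi_j,J$, and the convexity estimate for coordinates outside $J$ — is a direct transcription of the argument already given for Lemma~\ref{lemma:tech}.
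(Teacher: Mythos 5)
Your proposal is correct and follows essentially the same route as the paper's proof: induct coordinate by coordinate, use the squared-modulus identity $\left|\cdot\right|^2=\E[|Z_{Y_1}|]^2-4p_1p_{-1}|Z_1||Z_{-1}|\sin^2(\phi_1+x_1\eta)$, split on whether $|Z_1|,|Z_{-1}|$ are within a factor of two (handling the comparable case via $p_1p_{-1}\ge\kappa(1-\kappa)$, Bernoulli and Jensen, and the lopsided case via Claim~\ref{clm:MinPhi}), bound coordinates outside $J$ by plain convexity, and take $c_0=\min\{c_1,\kappa(1-\kappa)\}$. The only blemish is a label slip---Claim~\ref{clm:MinPhi} handles the \emph{unbalanced} case $|Z_1|\ge 2|Z_{-1}|$, not the ``balanced'' one---but your final paragraph makes clear you apply it exactly as the paper does, so this does not affect correctness.
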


\begin{proof}
The proof is by induction on $n$. If $1 \notin J$, the proof holds by induction. The base case of $n=1$ is trivial.
So assume that $1 \in J$. 
Express
\begin{align*}
\Ex{Y}{\exp(i \eta \ip{x}{Y})}  
& =  p_1 \exp(i \eta x_1) Z_1 + p_{-1} \exp(- i \eta  x_1 ) Z_{-1} ,
\end{align*}
where for $\epsilon \in \{\pm 1\}$, $$p_\epsilon = \Pr[Y_1 = \epsilon]
\qquad \& \qquad Z_{\epsilon} = \Ex{Y|Y_1 =\epsilon}{\exp(i \eta \ip{x_{>1}}{Y_{>1}})}.$$
When $n=1$, we have $Z_1=Z_{-1}=1$.	 
Using the definition of $\phi_1$,
\begin{align*} 
&\left|p_1 \exp(i \eta  x_1) Z_1 + p_{-1} \exp(-i \eta  x_1 ) Z_{-1}\right|^2 \\ 
& = p_1^2 |Z_1|^2 + p_{-1}^2 |Z_{-1}|^2 + p_1 p_{-1} (Z_1 \overline{Z_{-1}} \exp(i2\eta  x_1)+ \overline{Z_1} Z_{-1} \exp(-i2\eta x_1)) \\
&= p_1^2 |Z_1|^2 + p_{-1}^2 |Z_{-1}|^2 + 2 p_1 p_{-1} |Z_1| |Z_{-1}|  \cos(2 \phi_1 + 2 x_1 \eta) \\
&= p_1^2 |Z_1|^2 + p_{-1}^2 |Z_{-1}|^2 + 2 p_1 p_{-1} |Z_1| |Z_{-1}| 
\\ 
& \qquad -2 p_1 p_{-1}  |Z_1| |Z_{-1}| (1-\cos(2 \phi_1 + 2 x_1 \eta)) \\
&= \Expect{|Z_{Y_1}|}^2 - 
4 p_1 p_{-1} |Z_1| |Z_{-1}| \sin^2 ( \phi_1 +  x_1 \eta),
\end{align*} 
Without loss of generality, assume that $|Z_1| \geq |Z_{-1}|$. There are two cases to consider.
The first case is that $Z_1$ and $Z_{-1}$ are comparable in  magnitude: $|Z_1| \leq 2 |Z_{-1}|$. In this case,  we can continue the bound by
\begin{align*} 
& \leq \Expect{|Z_{Y_1}|}^2 - 
2 p_1 p_{-1} |Z_1|^2 \sin^2 ( \phi_1 +  x_1 \eta) \\
& \leq \Expect{|Z_{Y_1}|}^2 (1 - 
2 \kappa (1-\kappa)  \sin^2 ( \phi_1 +  x_1 \eta) ) ,
\end{align*} 
since $1 \in J$. This gives
\begin{align*} 
&\left|p_1 \exp(i \eta  x_1) Z_1 + p_{-1} \exp(-i \eta  x_1 ) Z_{-1}\right|^{1+\nu} \\ 
& \leq \Expect{|Z_{Y_1}|}^{1+\nu}(1 - 
2 \kappa (1-\kappa)  \sin^2 ( \phi_1 +  x_1 \eta) )^{(1+\nu)/2}\\
& \leq \Expect{|Z_{Y_1}|^{1+\nu}}(1 - 
 \kappa (1-\kappa)  \sin^2 ( \phi_1 +  x_1 \eta) ),
\end{align*} 
since the map $\zeta \mapsto \zeta^{1+\nu}$ is convex.

The second case is when $|Z_1| > 2|Z_{-1}|$.
Recall that we have already shown
\begin{align*} 
&\left|p_1 \exp(i \eta  x_1) Z_1 + p_{-1} \exp(-i \eta  x_1 ) Z_{-1}\right|^2 \\ 
&= \Expect{|Z_{Y_1}|}^2 - 
4 p_1 p_{-1} |Z_1| |Z_{-1}| \sin^2 ( \phi_1 +  x_1 \eta) \\
& \leq \Expect{|Z_{Y_1}|}^2 . 
\end{align*}
Claim \ref{clm:MinPhi} implies that 
\begin{align*} 
&\left|p_1 \exp(i \eta  x_1) Z_1 + p_{-1} \exp(-i \eta  x_1 ) Z_{-1}\right|^{1+\nu} \\ 
& \leq \Expect{|Z_{Y_1}|}^{1+\nu} \\
& \leq (1-c_1 \nu)  \cdot \Expect{|Z_{Y_1}|^{1+\nu}} \\
& \leq (1-c_1 \nu \sin^2( \phi_j + x_j \eta ) )   \cdot \Expect{|Z_{Y_1}|^{1+\nu}}.
\end{align*} 

Finally, setting $c_0 = \min\{c_1, \kappa (1-\kappa)\}$, we get a bound that applies in both cases:
\begin{align*} 
\left |\Ex{Y}{\exp({i \eta \ip{x}{Y}})} \right|^{1+\nu}
& \leq (1-c_0 \nu \sin^2( \phi_j + x_j \eta ) ) \cdot   \Expect{|Z_{Y_1}|^{1+\nu}}.
\end{align*}
This proves the base case of the induction and 
also allows to perform the inductive step.

\end{proof}

\subsection{An Average Direction} 
\label{sec:fourierboundAverageU}
In this section we analyze
the bound from the previous section for an average direction $X$
in a two-cube $A \subset \Z^n$.
This step has no analogy in the proof
of Theorem~\ref{thm:tech}.
To compute the expectation over an average direction, we reveal the entropy of $X$ coordinate by coordinate
in reverse order (from the $n$'th coordinate to the first one).

In analogy with $\gamma_1, \dotsc, \gamma_n$, define
the following functions $\mu_1,\ldots,\mu_n$.
For each $j \in [n]$, let
$$\mu_j(x) = \mu_j(x_{>j}) 
= \min_{\epsilon \in A_j} \Pr[X_j = \epsilon | X_{>j}= x_{>j}];$$
this is well-defined for $x$ in $A = \text{supp}(X)$.
In analogy with the definition of $J(y)$,
let 
$$J'(x) = \{j \in [n]: \mu_j(x) \geq \kappa\}.$$
In this section, we define the set $G$ differently, but use the same notation. Let 
$$ G(x,y) = G_{A,B,\kappa}(x,y) =  J'(x) \cap J(y).$$
Recall that $\gamma_j$, $\phi_j$
and $J(\cdot)$ depend on the set $B$,
on $y \in B$ and on $x \in \Z^n$.
In the following lemma, we fix an arbitrary $y \in B$, and take the expectation over a random $X \in A$.
We allow $G$ to be a random set that depends on $X$, and
$\phi_j$ to be a random variable
that depends on $X_{>j}$.

\begin{lemma} \label{lemma:techU2} 
For every $\kappa > 0$ and $0 < c_0 \leq 1$,  
		there is a constant $c > 0$ so that the following holds. For every $0 < \nu\leq 1$, every angle $\eta \in \R$, 
		every $B \subseteq \{\pm 1\}^n$,
		every $y \in B$,
every random variable $X$ taking values in a two-cube $A\subseteq \Z^n$ with differences $d_j = u_j - v_j$, 
		\begin{align*}
		\Ex{X}{\prod_{j \in J} (1-c_0 \nu \sin^2(\phi_j + X_j \eta))}^{1+\nu} & \leq \Ex{X}{\exp\Big(- c \nu \sum_{j \in G} \sin^2(d_j \eta) \Big)}.
		\end{align*}
\end{lemma}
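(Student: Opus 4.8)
The plan is to mirror the structure of the proof of Lemma~\ref{lemma:tech}/Lemma~\ref{lemma:techU}, now performing an induction on the coordinates of $X$ in reverse order, peeling off one coordinate of $X$ at a time and using the strict convexity of $\zeta \mapsto \zeta^{1+\nu}$ to gain a multiplicative factor for each \emph{good} coordinate. Concretely, I would induct on $n$. If $n \notin J'(x)$ (i.e.\ $\mu_n(x) < \kappa$), then $n \notin G(x,y)$ for every $x$, so there is nothing to gain at this coordinate, and the inductive hypothesis applied after averaging over $X_n$ suffices (the product over $j \in J$ is unchanged in content, and since $n \notin G$ the right-hand side exponent does not include a $\sin^2(d_n \eta)$ term). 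The base case $n = 1$ is again trivial: either the product is empty or, after the single averaging step, reduces to a one-coordinate instance.

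For the main step, fix $y$ and suppose $n \in J'(x)$ for the relevant $x$'s. Write $X_n \in \{u_n, v_n\}$ with probabilities $q_{u_n}, q_{v_n} \geq \kappa$, and let $P_\epsilon = \prod_{j \in J, j < n}(1 - c_0 \nu \sin^2(\phi_j + X_j \eta))$ evaluated with $X_n = \epsilon$ — note $\phi_j$ for $j < n$ may depend on $X_{>j}$, hence on $X_n$. The quantity to bound is $\E_{X_n}[(1 - c_0\nu \sin^2(\phi_n + X_n \eta)) \cdot W_{X_n}]^{1+\nu}$ where $W_\epsilon = \E_{X_{<n}}[P_\epsilon \mid X_n = \epsilon]$ and, after the inductive hypothesis is applied to $W_\epsilon^{1+\nu}$, we will replace $W_\epsilon^{1+\nu}$ by $\E_{X_{<n}}[\exp(-c\nu \sum_{j \in G, j<n}\sin^2(d_j\eta)) \mid X_n = \epsilon]$. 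The key elementary fact I would isolate is: for $a_{u_n} = 1 - c_0\nu\sin^2(\phi_n + u_n\eta)$ and $a_{v_n} = 1 - c_0\nu\sin^2(\phi_n + v_n\eta)$, one has
\begin{align*}
\E_{X_n}[a_{X_n}]^{1+\nu} \leq \exp(-c'\nu \sin^2(d_n\eta)) \cdot \E_{X_n}[a_{X_n}^{1+\nu}]
\end{align*}
for a constant $c' = c'(\kappa, c_0) > 0$. This is where Claim~\ref{clm:OneGoodChoice} enters: that claim (with $\varphi = \phi_n$, $u = u_n$, $v = v_n$) guarantees $\max\{|\sin(\phi_n + u_n\eta)|, |\sin(\phi_n + v_n\eta)|\} \geq \tfrac12|\sin(d_n\eta)|$, so at least one of $a_{u_n}, a_{v_n}$ is at most $1 - \tfrac{c_0\nu}{4}\sin^2(d_n\eta)$; combined with $\min\{q_{u_n}, q_{v_n}\} \geq \kappa$ and the strict-convexity gain from Claim~\ref{clm:MinPhi} (or a direct two-point convexity estimate), one extracts the stated factor. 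The number $4$ here is harmless since it is absorbed into $c'$.

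Putting it together: use $\E[fg]^{1+\nu} \le$ (a two-point Hölder / convexity manipulation) to separate the contribution of coordinate $n$ from the rest; apply the elementary fact above to the coordinate-$n$ factor to produce $\exp(-c'\nu\sin^2(d_n\eta))$; apply the inductive hypothesis to the remaining product; and finally note that $\sum_{j \in G}\sin^2(d_j\eta) = \mathbf{1}[n \in G]\sin^2(d_n\eta) + \sum_{j \in G, j < n}\sin^2(d_j\eta)$, with $\mathbf{1}[n\in G] = \mathbf{1}[n \in J(y)]\cdot\mathbf{1}[n \in J'(x)]$, so the new term matches exactly when $n \in J(y)$ and is a harmless underestimate (we may always drop a nonnegative term from the exponent's argument, making the bound larger) when $n \notin J(y)$. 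Set $c = \min\{c', \dots\}$ at the end.

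The main obstacle I anticipate is the bookkeeping around the two powers $(\cdot)^{1+\nu}$ and the fact that $\phi_j$ for $j < n$ genuinely depends on $X_n$, so one cannot naively split $\E_{X_n}[a_{X_n} W_{X_n}]$ as a product of independent expectations. The right way to handle this is to first bound $\E_{X_n}[a_{X_n}W_{X_n}]^{1+\nu} \le \E_{X_n}[a_{X_n}W_{X_n}^{1+\nu}]^{?}$ — more carefully, one wants an inequality of the shape $\E_{X_n}[a_{X_n}W_{X_n}]^{1+\nu} \le \exp(-c'\nu\sin^2(d_n\eta))\,\E_{X_n}[(a_{X_n}W_{X_n})^{1+\nu}]$ treating $a_{X_n}W_{X_n}$ as the ``$\alpha_W$'' in Claim~\ref{clm:MinPhi}, but that requires controlling the ratio of the two values $a_{u_n}W_{u_n}$ and $a_{v_n}W_{v_n}$, which is not given. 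The cleanest fix, which I would adopt, is to case on whether $W_{u_n}, W_{v_n}$ are comparable (within a factor $2$, say) — exactly as in the proof of Lemma~\ref{lemma:techU} — using the $\sin^2$-gain from coordinate $n$ in the comparable case and the convexity gain from Claim~\ref{clm:MinPhi} applied to $W$ (ignoring the coordinate-$n$ gain, which we can afford to throw away since $\sin^2(d_n\eta) \le 1$) in the lopsided case. This two-case split is the technical heart of the argument and is where the constant $c$ is pinned down.
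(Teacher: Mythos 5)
Your proposal is correct and follows essentially the same route as the paper: induction peeling off the last coordinate of $X$, with the two-case split on whether $Z_{u_n}$ and $Z_{v_n}$ are within a factor of $2$, using Claim~\ref{clm:OneGoodChoice} together with $\mu_n \geq \kappa$ to extract the $\sin^2(d_n\eta)$ gain in the comparable case, and Claim~\ref{clm:MinPhi} (discarding the coordinate-$n$ gain via $\sin^2(d_n\eta)\le 1$) in the lopsided case. The issue you flag about $\phi_j$ for $j<n$ depending on $X_n$, and your proposed fix, are exactly how the paper's proof proceeds.
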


\begin{proof}
The proof is by induction on $n$. 
Recall that
$\phi_j$ and $\mu_j$ is determined by $x_{>j}$.
In particular, whether or not $n \in G(x,y)$ 
does not depend on $x$.
If $n \notin G(x,y)$, the proof holds by induction, or is trivially true for $n=1$. 
So assume that $n \in G(x)$. 
Start with
\begin{align*}
 	& \Ex{X}{\prod_{j \in J} (1- c_0 \zeta \sin^2( \phi_j + X_j \eta ))} \\
	 	& = \Ex{X_{n}}{(1- c_0 \zeta \sin^2( \phi_n + X_n \eta )) Z_{X_n}  },	\
		\end{align*}
where for $a \in A_n : = \{u_n,v_n\}$,
$$Z_a = \Ex{X|X_{n}=a}{
		\prod_{j \in J:j<n} (1- c_0 \sin^2( \phi_j + X_j \eta ))}.$$
		If $n=1$, then $Z_u = Z_v = 1$.
Assume without loss of generality 
		that $Z_u \geq Z_v$.
There are two cases to consider.
The first case is that $Z_u > 2 Z_v$.
In this case, Claim~\ref{clm:MinPhi} implies
\begin{align*}
 	 \Ex{X}{\prod_{j \in J} (1- c_0 \nu \sin^2( \phi_j + X_j \eta ))}^{1+\nu}  &\leq
		\Expect{Z_{X_n}}^{1+\nu} \\
		&\leq (1- c_1 \nu) \Expect{Z_{X_n}^{1+\nu}}\\
		&\leq \exp(- c_1 \nu) \Expect{Z_{X_n}^{1+\nu}}.
		\end{align*} 
The second case is when $Z_u \leq 2 Z_v$. 
By Claim~\ref{clm:OneGoodChoice}, 
$$\max \big\{ |\sin(\phi_n + u \eta)| , |\sin(\phi_n + v \eta)|
\big\} \geq \tfrac{\sin(d_n \eta)}{2}.$$ 
Since $\mu_n(x) \geq \kappa$,
\begin{align*}
 &\Ex{X_{n}}{(1- c_0 \nu \sin^2( \phi_n + X_n \eta )) Z_{X_n}  }^{1+\nu} \\
 & \leq (\Ex{X_{n}}{Z_{X_n}  } - \kappa c_0 \nu \tfrac{\sin^2(d_n \eta)}{4} \tfrac{Z_{u}}{2})^{1+\nu}\\
 & \leq (\Ex{X_{n}}{Z_{X_n}} (1 -  \tfrac{\kappa  c_0 \nu}{8} \sin^2(d_n \eta)))^{1+\nu}\\
 & \leq \Ex{X_{n}}{Z^{1+\nu}_{X_n}}  \exp( - \tfrac{c_0 \kappa \nu}{8} \sin^2(d_n \eta)).
\end{align*}

In both cases, 
$$ \Ex{X}{\prod_{j \in J} (1- c_0 \sin^2( \phi_j + X_j \eta ))}^{1+\nu} \leq \exp(-c \nu \sin^2(d_n \eta))  \Ex{X_n}{Z_{X_n}^{1+\nu}},$$ for some constant $c(\kappa,c_0) >0$.
This proves the base case of the induction and 
also allows to perform the inductive step.

\end{proof}

\subsection{Putting It Together}
\label{sec:mainPfU}

\begin{proof}[Proof of Theorem \ref{thm:mainTechU}]
Let $\mu >0$ and $0 < \nu \leq 1$
be so that 
  $$ \mu^{(1+\nu)^2}
  \geq 3  \exp(-\tfrac{ \nu n}{C}) +
 \tfrac{R_C(A)}{50 \sqrt{\nu}};$$
if no such $\mu,\nu$ exist then the theorem is trivially true.
Let $$A_0 = \Big\{ x \in A : 
\Ex{\theta}{ \Big |\Ex{Y}{ \exp(2\pi i \theta \cdot\ip{x}{Y})} \Big|}
 \geq \mu \Big\}.$$
Denote the size of $A_0$ by $2^{\alpha n}$.
Assume towards a contradiction that 
$\alpha +\beta \geq 1+\delta$.
Let $X$ be uniformly distributed in $A_0$, independently of $Y$ and $\theta$.
Let $\lambda = \tfrac{\delta}{7}$,
and let $\kappa$ be as in~\eqref{1}.
By Lemma~\ref{lemma:techU},
		\begin{align*}
& \Ex{X,\theta}{\Big|\Ex{Y}{\exp({i 2\pi \theta \ip{x}{Y}})}\Big|}^{(1+\nu)^2} \\
& \leq \Ex{X,\theta}{\Big|\Ex{Y}{\exp({i 2\pi \theta \ip{x}{Y}})}\Big|^{1+\nu}}^{1+\nu} \\
& \leq \Ex{X,\theta}{\Ex{Y}{
	\prod_{j \in J} (1- c_0 \nu \sin^2( \phi_j + x_j 2 \pi \theta ) }
}^{1+\nu} .
\end{align*}
By Lemma~\ref{lemma:techU2}, we can continue
		\begin{align*}
& = \Ex{Y,\theta}{\Ex{X}{
	\prod_{j \in J} (1- c_0 \nu \sin^2( \phi_j + x_j 2 \pi \theta ) }}^{1+\nu} \\
& \leq \Ex{Y,\theta}{\Ex{X}{
	\prod_{j \in J} (1- c_0 \nu \sin^2( \phi_j + x_j 2 \pi \theta ) }^{1+\nu}} \\
& \leq \Ex{X,Y,\theta}{\exp (- c \nu D(\theta))} ,
\end{align*}
where
$$D(\theta) = D_{x,y}(\theta) = \sum_{j \in G(x,y)}  \sin^2( 2 \pi \theta d_j).$$
%
%
By Lemma~\ref{lemma:encoding1}, $|J(y)| > n(\beta - 3 \lambda)$ for all but $2^{n(\beta - 2 \lambda)}$ choices for $y$. Similarly, $|J'(x)|> n(\alpha - 3 \lambda)$ for all but $2^{n(\alpha - 2 \lambda)}$  choices of $x$. 
By assumption,
$\beta - 3 \lambda + \alpha - 3 \lambda \geq \lambda$.
Since $|G(x,y)| \geq |J(y)| + |J'(x)| - n$, 
\begin{align*}
&\Pr [  |G(X,Y)| \leq \lambda n ]  \\
& \leq \Pr[|J(Y)| \leq n(\beta - 3 \lambda)] + \Pr[|J'(X)| \leq n(\alpha - 3 \lambda)]\\
& \leq 2^{-2\lambda n}+ 2^{-2\lambda n}.
\end{align*}

Next, we need a claim that is essentially identical to one used in the standard proof of Hal{\'a}sz inequality \cite{Taovu}:
\begin{claim*} 
Let $x,y$ be so that $G(x,y) \geq \lambda n$.
For every $0 \leq \rho\leq \tfrac{\lambda n}{4}$
and integer $\ell > 0$,
	$$\Pr_\theta  [D(\theta) < \rho ] \leq \frac{4 r_\ell(A)}{( \lambda n)^{2\ell+1/2}}   \sqrt{\rho} .$$
\end{claim*}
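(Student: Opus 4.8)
The plan is to prove the claim by a second-moment / moment-counting argument on the random variable $D(\theta)$, exactly mirroring the classical Hal\'asz approach. Fix $x,y$ with $|G(x,y)| \geq \lambda n$, and write $m = |G(x,y)|$, so $m \geq \lambda n$. The key identity is that $\sin^2(2\pi\theta d_j) = \tfrac{1}{2}(1 - \cos(4\pi\theta d_j)) = \tfrac{1}{2} - \tfrac14 e^{4\pi i \theta d_j} - \tfrac14 e^{-4\pi i \theta d_j}$, and hence
\begin{align*}
D(\theta) = \sum_{j \in G(x,y)} \sin^2(2\pi\theta d_j) = \frac{m}{2} - \frac14 \sum_{j \in G(x,y)} \big( e^{4\pi i \theta d_j} + e^{-4\pi i \theta d_j}\big).
\end{align*}
First I would compute the $2\ell$-th moment $\Ex{\theta}{D(\theta)^{2\ell}}$ by expanding the product and integrating term by term over $\theta \in [0,1]$; each monomial $e^{2\pi i \theta (\pm 2 d_{j_1} \pm \cdots \pm 2 d_{j_{2\ell}})}$ integrates to $1$ if the signed sum of the $d_{j_a}$ vanishes and to $0$ otherwise. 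This gives a bound of the shape $\Ex{\theta}{D(\theta)^{2\ell}} \leq C^\ell\, r_\ell(A_G)$ where $r_\ell$ counts vanishing signed sums drawn from the indices in $G$; since $G \subseteq [n]$ we have $r_\ell(A_G) \leq r_\ell(A)$, so $\Ex{\theta}{D(\theta)^{2\ell}} \leq C^\ell r_\ell(A)$ for an absolute constant $C$ (coming from the $\tfrac14$ coefficients and the $2^{2\ell}$ sign choices).

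Next I would extract the tail bound. The point of the classical argument is that $D(\theta)$ has mean $m/2 \geq \lambda n / 2$, so $D$ being small is a large deviation below the mean. Since $D(\theta)^{2\ell} \geq 0$ always and $D(\theta) \geq 0$, on the event $\{D(\theta) < \rho\}$ with $\rho \leq \lambda n/4 \leq m/4$ we are far below the mean; one runs a Paley–Zygmund-type inequality, using that the \emph{centered} variable $D(\theta) - m/2 = -\tfrac14\sum (\cdots)$ has $(2\ell)$-th moment controlled by $r_\ell(A)$ (the $j=0$ contribution is subtracted off). Concretely, $\{D < \rho\}$ implies $|D(\theta) - m/2| \geq m/2 - \rho \geq m/4 \geq \lambda n / 4$, so by Markov applied to $(D(\theta)-m/2)^{2\ell}$,
\begin{align*}
\Pr_\theta[D(\theta) < \rho] \leq \Pr_\theta\Big[|D(\theta) - \tfrac{m}{2}| \geq \tfrac{\lambda n}{4}\Big] \leq \frac{\Ex{\theta}{|D(\theta) - m/2|^{2\ell}}}{(\lambda n/4)^{2\ell}} \leq \frac{C^\ell\, r_\ell(A)}{(\lambda n)^{2\ell}}.
\end{align*}
This is the wrong shape, though — it has no $\sqrt{\rho}$ and the exponent on $\lambda n$ is $2\ell$ rather than $2\ell + 1/2$. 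The fix, which is the real content of the Hal\'asz trick, is to not apply Markov crudely but to integrate: one bounds $\Pr_\theta[D(\theta) < \rho]$ by relating it to the Lebesgue measure of $\theta$ where a trigonometric polynomial is small, and uses a smoothing/averaging step (e.g. the elementary fact that for the function $f(\theta) = \exp(-D(\theta))$ or a suitable kernel, $\int_0^1 \mathbf{1}[D(\theta) < \rho]\,d\theta$ is bounded via $\int_0^1 e^{-D(\theta)/\rho}\,d\theta$ expanded as a power series and compared to the moment bounds, picking the optimal number of terms). The extra $\sqrt\rho$ and the extra half-power of $\lambda n$ come out of optimizing this expansion, exactly as in the standard proof in Tao–Vu \cite{Taovu}.

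The main obstacle is getting precisely the stated exponents $2\ell + 1/2$ and the factor $\sqrt\rho$ rather than a cruder $2\ell$-type bound; this requires the careful kernel/optimization argument rather than a one-line Markov estimate, and one must be attentive that all constants are absorbed into the single absolute constant "$4$" in the statement (which forces the smoothing step to be done sharply). A secondary, more bookkeeping-level point is to make sure the restriction of the counting function from all of $[n]$ to $G(x,y)$ only decreases $r_\ell$ — this is immediate since every vanishing signed sum with indices in $G$ is also one with indices in $[n]$ — so that $r_\ell(A)$ (not some $G$-dependent quantity) appears in the final bound, which is what lets us later take the infimum over $\ell$ uniformly in $x,y$. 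I would present the moment computation first, then the smoothing/optimization, and flag the latter as the step borrowed essentially verbatim from the proof of the Hal\'asz inequality.
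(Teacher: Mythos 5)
Your moment computation is right and matches the paper: writing $|G|-2D(\theta)=\sum_{j\in G}\cos(4\pi d_j\theta)$, expanding into exponentials, and counting the zero-phase terms gives $\Ex{\theta}{(|G|-2D(\theta))^{2\ell}}\leq 2^{-2\ell}r_\ell(A)$ (and your observation that restricting indices to $G\subseteq[n]$ only helps is exactly the bookkeeping needed). Markov then gives $\Pr_\theta[D(\theta)\leq \tfrac{\lambda n}{4}]\leq r_\ell(A)/(\lambda n)^{2\ell}$, which is the claim at the top level $\rho=\tfrac{\lambda n}{4}$. Up to here you are on the paper's track.

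The gap is in the only step that actually produces the factor $\sqrt{\rho}$ and the exponent $2\ell+\tfrac12$. You correctly note that a crude Markov bound cannot give this, but the fix you sketch --- bounding $\int_0^1 \mathbf{1}[D(\theta)<\rho]\,\mathrm{d}\theta$ via $\int_0^1 e^{-D(\theta)/\rho}\,\mathrm{d}\theta$, expanding in a power series and ``optimizing the number of terms'' --- is not the Hal\'asz/Tao--Vu argument and does not work as described: the moments of $|G|-2D$ (equivalently of $D$ about its mean) only quantify how often $D$ drops below a constant fraction of $|G|$, so no amount of optimizing over moment orders can distinguish the event $\{D<\rho\}$ from $\{D<\tfrac{\lambda n}{4}\}$ and produce a bound that improves as $\rho\to 0$ (and for exponent scales $k\sim \lambda n/\rho$ the relevant counts $r_{k/2}(A)$ blow up, so the series comparison gives nothing). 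The missing ingredient is structural, not analytic: from $|\sin(\eta_1+\eta_2)|\leq|\sin\eta_1|+|\sin\eta_2|$ one gets $D(\theta_1+\cdots+\theta_m)\leq m^2\max_i D(\theta_i)$, so the sublevel sets $S_\rho=\{\theta\in\R/\Z: D(\theta)\leq\rho\}$ satisfy $S_\rho+\cdots+S_\rho\subseteq S_{m^2\rho}$; then a sumset measure-growth theorem on the torus (Kemperman, or Kneser for abelian groups) gives $|S_{m^2\rho}|\geq m|S_\rho|$ as long as $S_{m^2\rho}\neq\R/\Z$, which is guaranteed at level $\tfrac{\lambda n}{4}$ because $\Ex{\theta}{D(\theta)}=\tfrac{|G|}{2}$. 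Choosing $m$ maximal with $m^2\rho\leq\tfrac{\lambda n}{4}$, so $m\asymp\sqrt{\lambda n/\rho}$, and combining with the Markov bound at level $\tfrac{\lambda n}{4}$ yields $\Pr_\theta[D<\rho]\leq\tfrac1m\cdot r_\ell(A)/(\lambda n)^{2\ell}\leq 4\sqrt{\rho}\,r_\ell(A)/(\lambda n)^{2\ell+1/2}$. Without this sumset/Kemperman step (or an equivalent replication of small-level sets), your outline does not reach the stated inequality.
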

Given the claim, 
for every $x,y$ so that $G(x,y) \geq \lambda n$
and $\ell > 0$,
\begin{align*}
& \Ex{\theta}{ \exp (- c \nu D(\theta) ) } \\
& =
\int_0^1 \Pr_{\theta} [
 \exp (- c \nu D(\theta) ) > t  ] \, \mathrm{d} t
\\
& \leq \exp(- \tfrac{c \nu \lambda n}{4}) + 
\int_{\exp(- c \nu \lambda n /4)}^1 \Pr_{\theta} [
   D(\theta) < - \tfrac{\ln t}{c\nu} ] \, \mathrm{d} t
\\
& \leq \exp(-\tfrac{c \nu \lambda n}{4}) + 
\frac{4 r_\ell(A)}{(\lambda n)^{2\ell+1/2}}  \int_{0}^1 
 \sqrt{- \tfrac{ \ln t}{ c \nu}}
 \, \mathrm{d} t .
\end{align*}
The integral 
$\int_{0}^1 
 \sqrt{- \ln t}
 \, \mathrm{d} t \leq 1$ converges to a constant. 
For an appropriate $C = C(\beta,\delta) >0$
and $\ell >0$,
we get the desired contradiction.
 \begin{align*}
 \mu^{(1+\nu)^2}
 & \leq 2 \cdot 2^{-2 \lambda n} + 
 \exp(-\tfrac{c \nu \lambda n}{4}) +
\frac{4 r_\ell(A)}{\sqrt{c \nu} (\lambda n)^{2\ell+1/2}} \\
 & < 3  \exp(-\tfrac{ \nu n}{C}) +
 \tfrac{R_C(A)}{50 \sqrt{\nu}} .
 \end{align*}

\begin{proof}[Proof of Claim]
Let $G = G(x,y)$.
Observe that
\begin{align*}
& \Ex{\theta}{  ( |G|-2D(\theta) )^{2\ell} } \\
&= \Ex{\theta}{  \Big( \sum_{j \in G} \cos(4\pi d_j \theta)\Big)^{2\ell} }\\
&= 2^{-2\ell} \Ex{\theta}{   \Big( \sum_{j \in G} \exp(4\pi i d_j \theta)+ \exp(- 4\pi i  d_j \theta)\Big)^{2\ell} } \\
& \leq 2^{-2\ell} r_\ell(A) ;
\end{align*}
the last equality follows from the fact that of the $\leq (2|G|)^{2\ell}$ terms in the expansion, the only ones that survive are the ones with phase $0$. There are at most $r_\ell(A)$ such terms, and each contributes $1$. 

By Markov's inequality, since $|G| \geq \lambda n$,
\begin{align*}
\Pr_\theta  [D(\theta) \leq \tfrac{\lambda n}{4}  ] 
& \leq \Pr_\theta \Big [ (|G|-2D(\theta) )^{2\ell} \geq (\tfrac{\lambda n}{2})^{2\ell} \Big] \leq \frac{2^{-2\ell} r_\ell(A)}{(\lambda n/2)^{2\ell}} 
= \frac{r_\ell(A)}{(\lambda n)^{2\ell}}.
\end{align*}
This proves the claim for $\rho = \tfrac{\lambda n}{4}$.

It remains to prove the claim
for $\rho < \tfrac{\lambda n}{4}$.
This part uses Kemperman's theorem~\cite{kemperman}
from group theory (in fact Kneser's theorem~\cite{kneser}
for abelian groups suffices). Kemperman's theorem says that if a group is endowed with Haar measure $\mu$, then for any compact subsets $A,B$ of the group, $\mu(AB) \geq \min\{\mu(A) + \mu(B),1\}$.

Think of $[0,1)$ as the group $\R/\Z$.
Let 
$$S_\rho =  \{\theta \in \R/\Z : D(\theta) \leq \rho \}.$$

We claim that the $m$-fold sum $S_\rho+S_\rho+\dotsb+S_\rho
\subseteq \R/\Z$
is contained in $S_{\rho m^2}$. Indeed, 
\begin{align*}
|\sin(\eta_1+ \eta_2)| & = |\sin(\eta_1) \cos(\eta_2) + \sin(\eta_2) \cos(\eta_1)|\\
&\leq |\sin(\eta_1)|+ |\sin(\eta_2)|,
\end{align*}
and so 
\begin{align*}
\sin^2(\eta_1+\dotsb+ \eta_m) & \leq  (|\sin(\eta_1)|+ \dotsb + |\sin(\eta_m)|)^2\\ &\leq m  (\sin^2(\eta_1)+ \dotsb + \sin^2(\eta_m)) .
\end{align*}
It follows that
\begin{align*}
 D(\theta_1+\theta_2 + \cdots + \theta_m)
& \leq m (D(\theta_1) + D(\theta_2)+\cdots + D(\theta_m)) \\
& \leq m^2 \max \{D(\theta_1) , D(\theta_2),
\ldots,D(\theta_m)\} .
\end{align*}

Kemperman's theorem thus implies that 
$$|S_{\rho m^2}| \geq
|S_\rho+\dotsb+S_\rho| \geq m |S_\rho|,$$
as long as $S_{\rho m^2}$ is not
all of $\R/\Z$.
Since
\begin{align*}
\Ex{\theta}{D(\theta)}
= \sum_{j \in G} \Ex{\theta}{\sin^2(2 \pi \theta d_j)}
= \frac{|G|}{2} ,
\end{align*}
we can deduce that $|S_{\lambda n/4}| = \Pr_{\theta}[D(\theta) \leq \tfrac{\lambda n}{4}]$
is strictly less than one.
Hence, $S_{\lambda n/4}$ is not the full group $\R/\Z$.
Setting $m$ to be the largest integer so that $m^2\rho \leq \tfrac{\lambda n}{4}$, we can conclude
\begin{align*}
\Pr_\theta   [D(\theta) \leq \rho  ] &\leq \tfrac{1}{m} \Pr_\theta   [D(\theta) \leq \rho m^2  ] 
\leq \tfrac{1}{m} \Pr_\theta   [D(\theta) \leq \tfrac{\lambda n}{4}  ] .
\qedhere \end{align*}
\end{proof}
\end{proof}

\section{The lower bound for $\egh$} \label{sec:egh}

First, we show how to use Theorem \ref{thm:LBgen} to prove Theorem \ref{thm:LBgen1}.

\begin{proof}[Proof of Theorem~\ref{thm:LBgen1}]
The main observation is that for every integer $t$,
from a protocol that solves $\egh_{tn,tk}$ over the distribution $U_{tn,tk}$,
we get a randomized protocol that solves $\egh_{n,k}$. 
The reduction is constructed as follows.
Given inputs $x,y \in \{\pm 1\}^n$, first they repeat each input bit $t$ times to obtain $x',y' \in \{\pm 1\}^{tn}$. Then they sample a uniformly random $z \in \{\pm 1\}^{tn}$ using shared randomness, and compute $x'', y'' \in \{\pm 1\}^{tn}$ by setting $x''_j = x'_j  z_j$ and $y''_j = y'_j z_j$ for all $j \in [n]$. 
	Finally, they randomly permute the coordinates of $x'',y''$ to obtain $x''',y'''$. The result is that $x''',y'''$ are uniformly distributed among all inputs with inner product 
	that is equal to $t$ times the inner product of $x,y$.
The pair $(x''',y''')$ was generated with no communication.
Finally, they run the protocol for $\egh_{tn,tk}$ on $x''',y'''$.

	Now, let $\alpha,n_0$ be the constants from Theorem \ref{thm:LBgen}. 
	Let $t >0$ and $n > n_0$ be integers so that
	both $n/t$ and $k=\sqrt{n}/t$ are even
	and $k \leq \alpha \sqrt{n/t}$.
	By Theorem \ref{thm:LBgen},
	any protocol for $\egh_{n/t,k}$ over $U_{n/t,k}$ requires $\Omega(n/t)$ communication.
	By the reduction above,
	any protocol for $\egh_{n} = \egh_{n,\sqrt{n}}$
	yields a protocol for $\egh_{n/t,k}$.
%
%
\end{proof}

\begin{proof}[Proof of Theorem~\ref{thm:LBgen}]
	Suppose the assertion of the theorem is false.
	By a standard argument in communication complexity, the space of inputs can be partitioned into rectangles $R_1, \dotsc, R_L$ with $L \leq 2^{(1-\beta) n}$, where the output of the protocol on each $R_\ell$ is fixed. 
	
	Let $X,Y$ be i.i.d.\ uniformly at random in $\{\pm 1\}^n$. 
	Let $E$ denote the event that $|\inner{X}{Y} | = k$. 
	Define the collection of ``typical'' rectangles as
	$$\good = \Big \{ \ell \in [L] \ : \ \Pr_{X,Y}[E | R_\ell] \geq \tfrac{\Pr_{X,Y}[E]}{10}
	\quad \& \quad \Pr_{X,Y}[R_\ell] \geq 2^{-\big(1-\tfrac{\beta}{2}\big)n}\Big\}.$$
	For $\alpha \leq 2$, 
	because $k=n \mod 2$, we have $\Pr_{X,Y}[E] \geq \tfrac{p}{\sqrt{n}}$
	for some universal constant $p>0$. 
	The contribution of non-typical rectangles is small:
	\begin{align*}
	\sum_{\ell \not \in \good}\Pr_{X,Y}[R_\ell|E] & = \tfrac{1}{\Pr_{X,Y}[E]} \sum_{\ell \not \in \good}\Pr_{X,Y}[R_\ell] \Pr_{X,Y}[E|R_\ell] \\
	& < \tfrac{1}{\Pr_{X,Y}[E]} \Big(L 2^{-\big(1-\tfrac{\beta}{2}\big)n} +\tfrac{\Pr_{X,Y}[E]}{10} \Big) < \tfrac{1}{5} ,
	\end{align*}	
	for $n$ large enough.
	Because $k = -k \mod 4$
	and $|k| < \alpha \sqrt{n}$,
	for each $\ell \in \good$, Theorem~\ref{thm:mainP} with $\eps
	\geq \tfrac{\beta}{2}$ implies that
	\begin{align*}
	&|\Pr_{X,Y}[\inner{X}{Y} = k | R_\ell \wedge E] - \Pr_{X,Y}[\inner{X}{Y} = - k | R_\ell \wedge E] |\\
	&= |\Pr_{X,Y}[\inner{X}{Y} = k | R_j ] - \Pr_{X,Y}[\inner{X}{Y} = - k | R_j ]| \cdot \tfrac{1}{\Pr_{X,Y}[E|R_j]} \\
	& \leq  \alpha \sqrt{n}  \tfrac{c_0 }{n} \cdot \tfrac{10\sqrt{n}}{p} < \tfrac{1}{6} ,
	\end{align*}
	for $\alpha$ small enough.
	So, the probability of error conditioned on $R_\ell$ for $\ell \in \good$ is at least $\tfrac{5}{12}$.
	The total probability of error is at least
	\begin{align*}
	\sum_{\ell \in \good} \Pr_{X,Y}[R_\ell|E] \cdot \tfrac{5}{12} > 
	\tfrac{4}{5} \cdot \tfrac{5}{12}  = \tfrac{1}{3}.
	\end{align*}
	This contradicts the correctness of the protocol. \qedhere
	
\end{proof}

\subsubsection*{Acknowledgements}
We wish to thank James Lee, Oded Regev, Avishay Tal
and David Woodruff for helpful conversations.
We also wish to thank the two anonymous reviewers for
many valuable comments.

%
\bibliography
\appendix
\section{Strict Convexity} \label{sec:techclaim}
\begin{proof}[Proof of Claim \ref{clm:MinPhi}]
If $\alpha_1 =0$, then the claim is trivially true. So, 
assume that $\alpha_1 >0$.  Without loss of generality, we may also assume that $\kappa>0$ is small enough so that $4^\kappa > \exp(\kappa + \kappa^2)$. 

Let $p = \Pr[W = 1] \in [\kappa, 1-\kappa]$ and $\xi = \tfrac{\alpha_{-1}}{\alpha_1} \in [0,\tfrac{1}{2}]$. 
So, 
\begin{align*}
\frac{\Expect{\alpha_W}^{1+\nu}}{\Expect{\alpha_W^{1+\nu}}} 
& = \frac{(p+ (1-p) \xi)^{1+\nu}}{p  + (1-p) \xi^{1+\nu}}. 
\end{align*}
We need to upper bound this ratio by $1-c_1 \nu$, for some constant $c_1$ that depends only on $\kappa$. 
Let
	$$\Phi(\xi,p,\nu)
	= (p+(1-p)\xi^{1+\nu}) - (p+(1-p)\xi)^{1+\nu}.$$
	We shall argue that there is a constant $c_1 = c_1(\kappa)>0$ such that $\Phi(\xi,p,\nu) \geq  c_1\nu$. This completes the proof, since 
	\begin{align*}
	&\frac{(p + (1-p) \xi)^{1+\nu}}{(p + (1-p) \xi^{1+\nu})} = 1 - \frac{\Phi(\xi,p,\nu)}{(p + (1-p) \xi^{1+\nu})} < 1-c_1\nu.
	\end{align*}

First,	we show that for every $\nu$ and $\xi$, 
	the function $\Phi(\xi,p,\nu)$ is minimized when $p = \kappa$. Consider
	\begin{align*}
	\frac{\partial \Phi}{\partial p} & = 1 - \xi^{1+\nu} - (1+\nu) (p+(1-p)\xi)^{\nu} (1-\xi) \\
	&\geq 1- \xi^{1+\nu}- (1+\nu) (1-\xi)\\
	&\geq \xi (1+\nu - \xi^{\nu})>0,
	\end{align*}
	since $\xi^{\nu}<1$.
	So, the minimum is achieved when $p = \kappa$.

Second, we claim that for every $\nu$ and $p$, 
	the function $\Phi(\xi,p,\nu)$ is minimized when $\xi = \tfrac{1}{2}$. Consider
	\begin{align*}
	\frac{\partial \Phi}{\partial \xi} & = (1-p)(1+\nu)\xi^{\nu} - (1+\nu)(p+(1-p)\xi)^{\nu} (1-p)\\
	&= (1-p)(1+\nu) (\xi^\nu - (p+ (1-p)\xi)^\nu) <0,
	\end{align*}
	since $p+ (1-p)\xi > \xi$. So, the minimum is achieved when $\xi = 1/2$. 

Third, we control the derivative with respect to $\nu$
for $\xi = \tfrac{1}{2}$ and $p = \kappa$.
Consider
	\begin{align*}
	\frac{\partial  \Phi}{\partial \nu}(\tfrac{1}{2},\kappa,\nu) &=  (1-\kappa) \ln(\tfrac{1}{2}) (\tfrac{1}{2})^{1+\nu} - \ln (\tfrac{1+\kappa}{2}) (\tfrac{1+\kappa}{2})^{1+\nu} \\
	& \geq (\tfrac{1}{2})^2 ((1-\kappa) \ln(\tfrac{1}{2}) - \ln (\tfrac{1+\kappa}{2}) (1+\kappa)^{1+\nu}),  
	\end{align*}since $\nu \leq 1$. The expression 
	$$(1-\kappa) \ln(\tfrac{1}{2}) - \ln (\tfrac{1+\kappa}{2}) (1+\kappa)^{1+\nu}$$ 
	only increases with $\nu$. When $\nu=0$, this expression is
	\begin{align*}
\ln(\tfrac{2^{2\kappa}}{(1+\kappa)^{1+\kappa}}) \geq  \ln(\tfrac{4^{\kappa}}{\exp(\kappa(1+\kappa))})>0,
	\end{align*}
	since $4^\kappa > \exp(\kappa + \kappa^2)$. This proves that $\frac{\partial  \Phi}{\partial \nu}(\tfrac{1}{2},\kappa,\nu)>c_1$ for some constant $c_1(\kappa) >0$. 
  
Finally,
   $$\Phi(\xi,p,\nu) \geq \Phi(\tfrac{1}{2},\kappa,\nu)
   = \int_0^\nu 	\frac{\partial  \Phi}{\partial \nu}(\tfrac{1}{2},\kappa,\zeta) \, \mathrm{d} \zeta
   \geq \int_0^\nu c_1 \, \mathrm{d} \zeta 
   = c_1 \nu .\qedhere $$
\end{proof}

\end{document}